\newtheorem{theorem}{Theorem}[section]
\newtheorem{lemma}[theorem]{Lemma}
\newtheorem{prop}[theorem]{Proposition}
\newtheorem{corollary}[theorem]{Corollary}
\newtheorem{proposition}[theorem]{Proposition}
\theoremstyle{definition}
\newtheorem*{assumption}{Main Assumption}
\theoremstyle{remark}
\newtheorem{remark}[theorem]{Remark}
\newcommand{\refPone}{\textbf{\hyperref[P1]{P1}}\xspace}
\newcommand{\refPtwo}{\textbf{\hyperref[P2]{P2}}\xspace}
\newcommand{\refPthree}{\textbf{\hyperref[P3]{P3}}\xspace}
\newcommand{\refD}{\textbf{\hyperref[D]{D}}\xspace}
\newcommand{\refSone}{\textbf{\hyperref[S1]{S1}}\xspace}
\newcommand{\refStwo}{\textbf{\hyperref[S2]{S2}}\xspace}
\newcommand{\refA}{\hyperref[A]{Main Assumption}\xspace}
\numberwithin{equation}{section}
\newcommand{\IND}{\mathbbm{1}}
\newcommand{\capa}{\mathrm{Cap}}
\newcommand{\De}{\mathrm{d}}
\newcommand{\Res}{\mathrm{Res}}
\newcommand{\DeltaS}{\Delta_{\mathrm{S}}}
\newcommand{\Rden}{R_{\mathrm{den}}}
\newcommand{\Rhk}{R_{\mathrm{hk}}}
\newcommand{\Rkhk}{R_{\mathrm{khk}}}
\newcommand{\cA}{\ensuremath{\mathcal A}}
\newcommand{\cC}{\ensuremath{\mathcal C}}
\newcommand{\cF}{\ensuremath{\mathcal F}}
\newcommand{\cH}{\ensuremath{\mathcal H}}
\newcommand{\cL}{\ensuremath{\mathcal L}}
\newcommand{\cQ}{\ensuremath{\mathcal Q}}
\newcommand{\cS}{\ensuremath{\mathcal S}}
\newcommand{\cT}{\ensuremath{\mathcal T}}
\newcommand{\cU}{\ensuremath{\mathcal U}}
\newcommand{\bbG}{\ensuremath{\mathbb G}}
\newcommand{\bbN}{\ensuremath{\mathbb N}}
\newcommand{\bbP}{\ensuremath{\mathbb P}}
\newcommand{\bbR}{\ensuremath{\mathbb R}}
\newcommand{\bbZ}{\ensuremath{\mathbb Z}}
\begin{document}

\definecolor{airforceblue}{RGB}{204, 0, 102}
\newenvironment{draft}
{\par\medskip
\color{airforceblue}%
\medskip}

\title[Solidification estimates for random walks on percolation clusters]{Solidification estimates for random walks on supercritical percolation clusters}


\author{Alberto Chiarini}
\address{Universit\`a degli Studi di Padova}
\curraddr{Department of Mathematics ``Tullio Levi-Civita'', via Trieste 63, 35121, Padova}
\email{chiarini@math.unipd.it}
\thanks{}


\author{Zhizhou Liu}
\address{Department of Mathematics, The Hong Kong University of Science and Technology}
\curraddr{Clear Water Bay, Kowloon, Hong Kong}
\email{zliugm@connect.ust.hk}
\thanks{}


\author{Maximilian Nitzschner}
\address{Department of Mathematics, The Hong Kong University of Science and Technology}
\curraddr{Clear Water Bay, Kowloon, Hong Kong}
\email{mnitzschner@ust.hk}
\thanks{}

\begin{abstract}
We consider the simple random walk on the infinite cluster of a general class of percolation models on $\mathbb{Z}^d$, $d\geq 3$, including Bernoulli percolation as well as models with strong, algebraically decaying correlations. For almost every realization of the percolation configuration, we obtain uniform controls on the absorption probability of a random walk by certain ``porous interfaces'' surrounding the discrete blow-up of a compact set $A$. These controls substantially generalize previous results obtained in~\cite{nitzschner2017solidification} for Brownian motion in $\mathbb{R}^d$ and in~\cite{CN2021disconnection} for random walks on $\mathbb{Z}^d$ equipped with uniformly elliptic edge weights to a manifestly non-elliptic framework. 
\end{abstract}

\subjclass[2010]{}
\keywords{}
\dedicatory{}
\maketitle
\tableofcontents

\section{Introduction}
\label{sec:introduction}

In this article, we study the simple random walk on the infinite connected component $\cS_\infty$ of a typical realization of a supercritical percolation configuration on $\mathbb{Z}^d$, $d \geq 3$, for a large class of percolation models. The latter contains Bernoulli bond- or site-percolation, as well as models with strong, algebraically decaying correlations fulfilling certain structural assumptions taken from~\cite{alves2019,drewitz2014chemical}, including the vacant set of random interlacements, the level sets of the Gaussian free field, and the range or vacant set of a random walk loop soup. \medskip

As our main result, we consider the trace on $\mathcal{S}_\infty$ of the discrete blow-up $A_N$ of a compact set $A \subseteq \mathbb{R}^d$ with non-empty interior, $A_N \cap \cS_\infty$, and prove \textit{uniform absorption estimates} for a random walk started in $A_N \cap \cS_\infty$ by a class of \textit{porous interfaces} $\Sigma \subseteq \mathcal{S}_\infty$ surrounding $A_N$. Estimates of this type were first developed in~\cite{nitzschner2017solidification} in the context of Brownian motion and later extended in~\cite{CN2021disconnection} to random walks on $\mathbb{Z}^d$, $d \geq 3$, with uniformly elliptic weights $\lambda \in [\lambda_{\min},1]^{\mathbb{E}^d}$ attached to the edges $\mathbb{E}^d$ (here $\lambda_{\min} \in (0,1)$). Together with related lower bounds on the capacity of the porous interfaces $\Sigma$, these \textit{solidification estimates} have been instrumental as a substitute for a Wiener-type criterion in deriving large deviation upper bounds for the probability of various events characterized by atypical
isolation-type phenomena of macroscopic bodies, see in particular~\cite{chiarini2020GFF,chiarini2020entropic,
CN2021disconnection,nitzschner2018disconnection,
nitzschner2017solidification, sznitman2019macroscopic} (but see also Remark~\ref{rem:Applications-generalizations} for more on these applications).  While in~\cite{CN2021disconnection} the aforementioned solidification estimates could be obtained for \textit{every} fixed choice of edge weights $\lambda \in [\lambda_{\min},1]^{\mathbb{E}^d}$, leveraging quenched heat kernel bounds for random walks on $(\mathbb{Z}^d,\mathbb{E}^d,\lambda)$, the present set-up is markedly more delicate. This is due to an effect of \textit{local degeneracies} of the percolation cluster, rendering the use of uniform controls even in the simpler case of Bernoulli bond percolation infeasible. Informally, while many \textit{global properties} of $\mathcal{S}_\infty$ exhibit the same behavior as on $\mathbb{Z}^d$ (including chemical distances, volume regularity, quenched invariance principles, and Gaussian heat kernel bounds for the random walk, see, e.g.,~\cite{antal1996chemical,armstrong2018elliptic,
barlow2004RWpercolation, berger2007quenched,dario-gu2021percolation, mathieu2007quenched, sidoravicius2004quenched}, as well as~\cite{andres2025scaling} for the scaling limit of the Gaussian free field on a large class of percolation clusters), the presence of regions of small volume or irregular heat kernel qualitatively changes certain characteristics of the cluster. These include, in the case of Bernoulli percolation, the maximal effective resistance between points of the largest connected component of the cluster in large boxes, see~\cite{abe2015effective}, or the phases of the polymer model on $\mathcal{S}_\infty$, see~\cite{cosco2021directed,
nitzschner2022polymer}. In some cases, a behavior comparable to the full lattice is retained for $p$ close enough to one, see for instance~\cite{schweiger2024maximum} for the maximum of the Gaussian free field on a two-dimensional Bernoulli bond percolation cluster. However, removing the effect of locally bad behavior would require a very fine understanding of the local geometry of $\mathcal{S}_\infty$ (see Remark 4.15 of the same reference). Our main contribution may therefore be seen as a \textit{structural property} of $\mathcal{S}_\infty$ for a large class of models. Notably, our methods are applicable throughout the entire supercritical phase of Bernoulli bond percolation, as well as the (strongly) supercritical phase of a wide class of (possibly correlated) site percolation models. A major challenge inherent in our present set-up is to verify quantitatively that the influence of atypically bad regions that are present in $\mathcal{S}_\infty$ is \textit{not} felt by the random walk on sufficiently many scales. Crucially, our controls must be strong enough to deploy the construction of a certain generic \textit{resonance set}, which is hard to avoid for a random walk starting in $A_N \cap \cS_\infty$. \smallskip

We now state our results in more detail. Consider $\mathbb{Z}^d$, $d\geq 3$, and a one-parameter family of probability measures $\mathbb{P}^u$, $u \in (a,b)$, on $\{0,1\}^{\mathbb{Z}^d}$ governing a site percolation model on $\mathbb{Z}^d$ fulfilling the regularity properties  \refPone--\refPtwo, \refD, and \refSone--\refStwo, where we declare sites with label one to be open (see Subsection~\ref{subsec:ModelAssumptions} for a precise definition). We will turn to bond percolation models later in the introduction. The parameters $a,b$ are chosen to guarantee that the model is in a (strongly) supercritical phase. We denote the random set of open vertices by $\mathcal{S} \subseteq \mathbb{Z}^d$ and write 
\begin{equation}
\mathcal{S}_\infty = \text{ the unique infinite cluster of the subgraph of $\mathbb{Z}^d$ induced by $\mathcal{S}$,}
\end{equation}
which exists with $\mathbb{P}^u$-probability one under our standing assumptions. The set of assumptions is taken from~\cite{alves2019}, which itself is adapted from a slightly more restrictive set of assumptions that appeared in~\cite{drewitz2014chemical}, and includes a large class of pertinent site percolation models, see Remark~\ref{rmk:examples}.  Informally, assumptions \refPone, \refPtwo, and \refD correspond to ergodicity of the lattice shift operator, monotonicity in the percolation parameter $u$, and the availability of certain sprinkled decoupling inequalities (see~\eqref{eq:Decoup-A}, \eqref{eq:Decoup-B}), respectively. The properties \refSone and \refStwo guarantee the local uniqueness of the infinite cluster in large boxes, and the continuity of the density of the infinite cluster $u \mapsto \mathbb{P}^u[0 \in \mathcal{S}_\infty]$. For illustrative purposes of this introduction, one may restrict the attention to the family $(\mathbb{P}^u)_{u \in (p_c,1)}$ governing supercritical Bernoulli site percolation at varying levels $u$, with $p_c$ denoting the corresponding percolation threshold. Let $A \subseteq \mathbb{R}^d$ be compact with non-empty interior and denote its discrete blow-up by 
\begin{equation}
\label{eq:Discrete-Blow-up}
A_N = (NA) \cap \mathbb{Z}^d.
\end{equation}
We consider below a class of bounded sets $U_0 \subseteq \mathcal{S}_\infty$ with the exterior boundary $S=\partial_{\cS_\infty} U_0$ (taken with respect to $\mathcal{S}_\infty$) of $U_0$ being a sort of ``hard'' interface at a certain distance from $A_N$. Informally, $S$ can be thought of as being located at sup-norm distance $2^{\ell_\ast}$ of the set $A_N\cap\cS_\infty$ for an integer $\ell_\ast \geq 0$. To deal with the presence of ``sizeable, far-away bad regions'' of $\mathcal{S}_\infty$, we introduce a ``growth condition'' limiting the possible values of the distance parameter $2^{\ell_\ast}$ and the size of $U_0$. More precisely, we will assume the following:
\begin{equation}
\begin{minipage}{0.8\linewidth}
 \label{eq:a_N-bound}
Let $(a_N)_{N\geq 0}$ and $(b_N)_{N\geq 0}$ be two sequences of positive real numbers with $a_N,b_N\nearrow \infty$, $b_N/a_N \nearrow \infty$, and
  \[
  a_N / \exp\left\{\log (b_N)^{\frac{1}{1+\DeltaS/2}}\right\}  \nearrow \infty,
  \]
  as $N\to\infty$,
\end{minipage}
\end{equation}
where $\DeltaS = \DeltaS(u)$ is a parameter, possibly depending on $u$, entering the definition of \refSone, see~\eqref{eq:funcS}. In fact, we can assume without loss of generality that
\begin{equation}
\label{eq:b_N-at-least-linear}
b_N \geq \widetilde{c}(A) N, \qquad \text{for all }N \geq 1,
\end{equation} 
with a constant $\widetilde{c}(A)$ depending only on the set $A$. Heuristically, $a_N$ and $b_N$ should be viewed as lower and upper bounds for the distance $2^{\ell_\ast}$, respectively, and the condition in~\eqref{eq:a_N-bound} is chosen in such a way that good controls on the heat kernel of the random walk and volume regularity hold uniformly across all boxes of sizes essentially between $a_N$ and $b_N$ and whose centers lie in a sup-norm ball  of radius $b_N$, centred at the origin, 
for almost every realization of $\omega$, when $N$ is sufficiently large. 
With these preparations we formally introduce for any given ``typical'' percolation configuration $\omega\in \{0,1\}^{\mathbb{Z}^d}$ (so that $\cS_\infty$ is non-empty and unique) and an integer $\ell_\ast \geq 0$ the class 
\begin{equation}\label{eq:cal_U-intro}
\begin{split}
\mathcal{U}_{\ell_\ast,N}^\omega = & \;\mbox{the collection of sets $U_0 \subseteq B(0,b_N) \cap \mathcal{S}_\infty$ for which the}
\\[-0.5ex]
&\; \mbox{local density of $U_1=\cS_\infty \setminus U_0$ for any box $B(x,2^\ell) \cap \cS_\infty$ }
\\[-1ex]
&\; \mbox{centered at a point $x \in A_N\cap\cS_\infty$ is at most $\frac{1}{2}$ when $\ell \leq \ell_\ast$}
\end{split}
\end{equation}
(here, $B(z,R)$ stands for the closed sup-norm ball in $\mathbb{Z}^d$ centered at $z \in \mathbb{Z}^d$ with radius $R \geq 0$).  We then define the porous interfaces attached to a given $U_0 \in \mathcal{U}^\omega_{\ell_\ast,N}$, which may be viewed as deformations of the hard interface $S$. The former come with a certain integer ``distance'' parameter $\epsilon$ (limiting the sup-norm distance to $S$) and ``strength'' $\chi \in (0,1)$ (measuring how much the deformation is felt for a walk starting in $S$), and the set of all such interfaces is given by
\begin{equation}
\begin{split}
\mathcal{S}^\omega_{U_0,\epsilon,\chi} = & \;\mbox{the collection of bounded subsets $\Sigma \subseteq \mathcal{S}_\infty$ such that for all $x \in S$}
\\
&\; \mbox{$P_x^\omega[\text{Random walks enters $\Sigma$ before going to distance $\epsilon$}] \geq \chi$},
\end{split}
\end{equation}
where $P^\omega_x$ is the canonical measure governing a (continuous-time, constant-speed) simple random walk on $\mathcal{S}_\infty$, starting from $x \in \mathcal{S}_\infty$ (see Subsection~\ref{subsec:random-walks} for a precise definition). \medskip
 
\begin{figure}[htbp]
\begin{center}
\includegraphics[scale=0.65]{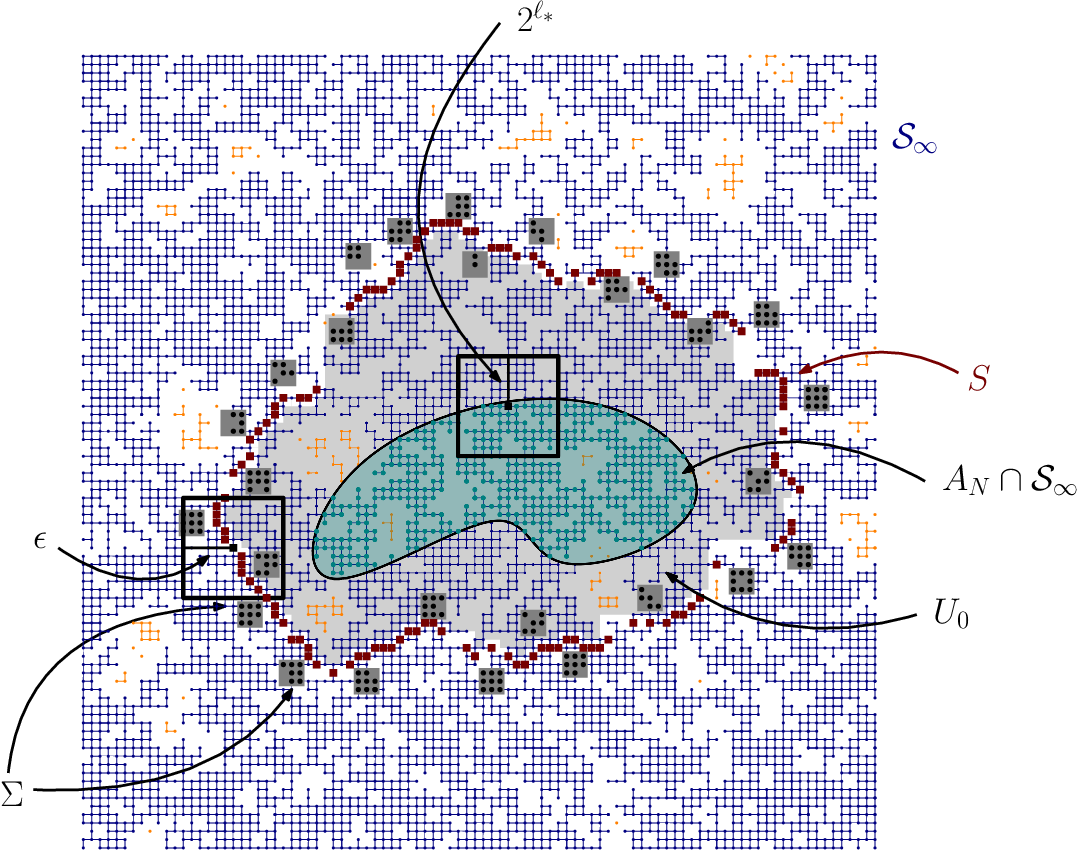}
\end{center}
\caption{Schematic illustration of a possible choice of $U_0$ with boundary $S = \partial_{\cS_\infty} U_0$ and $\Sigma$ in $\cS^\omega_{U_0,\epsilon,\chi}$.  In the picture, $U_0$ appears as the intersection between $\mathcal{S}_\infty$ and the shaded area, and $S$ consists of the squares at the boundary of the shaded area.}
\label{fig:interface}
\end{figure}

Our main result appears in Theorem~\ref{thm:solidification}. It states that for a given $A \subseteq \mathbb{R}^d$ as above~\eqref{eq:Discrete-Blow-up} and $\chi \in (0,1)$, uniformly over pairs of integers $(\epsilon,\ell_\ast)$ with $\epsilon/2^{\ell_\ast}$ tending to zero with $N$, $2^{\ell_{\ast}} \in [a_N,b_N]$ with $a_N,b_N$ fulfilling~\eqref{eq:a_N-bound} (and~\eqref{eq:b_N-at-least-linear}), $U_0 \in \cU^\omega_{\ell_\ast,N}$, and $\Sigma \in \mathcal{S}^\omega_{U_0,\epsilon,\chi}$, a random walk starting in $A_N \cap \cS_\infty$ asymptotically cannot avoid $\Sigma$, for almost every realization of the percolation configuration $\omega$. More precisely, we have
\begin{equation}
\label{eq:Solidification-intro}
\lim_{N \to \infty} \widetilde{\sup} \sup_{x \in A_N \, \cap \, \mathcal{S}_\infty} P_x^\omega[H_\Sigma  = \infty] = 0, \qquad \text{for $\mathbb{P}^u$-a.e.~$\omega \in \{0,1\}^{\mathbb{Z}^d}$}
\end{equation}
(and $\widetilde{\sup}$ stands for the supremum over all $(\epsilon,\ell_\ast)$ with $\epsilon/2^{\ell_\ast}$ tending to zero with $N$, $2^{\ell_{\ast}} \in [a_N,b_N]$ with $a_N,b_N$ fulfilling~\eqref{eq:a_N-bound} (and~\eqref{eq:b_N-at-least-linear}), $U_0 \in \mathcal{U}^\omega_{\ell_\ast,N}$, and $\Sigma \in \mathcal{S}^\omega_{U_0,\epsilon,\chi}$). We show in Corollary~\ref{cor:Capacity-Lower-Bound} that~\eqref{eq:Solidification-intro} implies the capacity lower bound
\begin{equation}
\label{eq:Capacity-lower-bound-intro}
\liminf_{N \to \infty} \widetilde{\inf} \frac{\capa^\omega(\Sigma)}{\capa^\omega(A_N \cap \cS_\infty)}  \geq 1, \qquad \text{for $\mathbb{P}^u$-a.e.~$\omega \in \{0,1\}^{\mathbb{Z}^d}$},
\end{equation}
where $\widetilde{\inf}$ is over the same configuration as $\widetilde{\sup}$ in~\eqref{eq:Solidification-intro}, and $\capa^\omega(\cdot)$ denotes the random walk capacity on $\mathcal{S}_\infty$, see~\eqref{eq:capacity}. On the full lattice $\mathbb{Z}^d$, $d\geq 3$ or on $\mathbb{R}^d$, $d \geq 3$, uniform lower bounds of the type considered in~\eqref{eq:Capacity-lower-bound-intro} are typically easier to obtain in the context of a convex set $A$ (or its discrete blow-up~$A_N$), by considering certain a projection attached to $A$ resp.~$A_N$ (we refer to the discussion in~\cite[Remark 3.6 (3)]{nitzschner2017solidification} for more on this in the continuum set-up). However, even if $A$ is convex, such a projection is typically ill-defined for $A_N \cap \mathcal{S}_\infty \subseteq \mathcal{S}_\infty$, which reinforces the generality of the methods presented here.
\medskip

We now briefly comment on the idea of the proof and the challenges compared to~\cite{nitzschner2017solidification} and~\cite{CN2021disconnection}. The main difficulty in the proof of~\eqref{eq:Solidification-intro} is the construction of a certain \textit{resonance set} which is attached to $\mathcal{U}^\omega_{\ell_\ast,N}$, and is hard to avoid for a random walk starting in $A_N \cap \mathcal{S}_\infty$. In essence, the former set is characterized by the non-degeneracy of the local density of both $U_0$ and $U_1=\cS_\infty \setminus U_0$ relative to the infinite cluster, on many properly separated scales. More precisely, for a given (typical) percolation configuration $\omega \in \{0,1\}^{\mathbb{Z}^d}$, integers $I, J, L \geq 1$ (which fulfill a certain compatibility condition, see~\eqref{eq:compatible}) representing the total number, strength, and separation of inspected scales, respectively, and $U_0 \subseteq \mathcal{S}_\infty$, we define
\begin{equation}
  \Res = \Res^\omega (U_0,I,J,L,\ell_*) = \Big\{x\in \cS_\infty\, : \, \sum_{\ell\in \cA_*} \IND_{\{\widetilde{\sigma}^\omega_\ell (x) \in [\widetilde{\alpha},1-\widetilde{\alpha}]\}} \geq J\Big\},
\end{equation}
where $\widetilde{\alpha}$ depends only on the dimension $d$, $\widetilde{\sigma}^\omega_\ell(x)$ measures the volume of $U_1$ relative to 
the box $B(x,4\cdot 2^\ell) \cap \mathcal{S}_\infty$, and $\ell$ is chosen among a subset $\mathcal{A}_\ast$ of integers $L\mathbb{N}$ (see~\eqref{eq:A-ast_def} for the precise definition). 
A key obstacle to directly utilizing previous results such as~\cite{CN2021disconnection} is that
due to the inherent irregularity of the percolation cluster, even at sufficiently large scales, one can only control the behavior of local densities up to a certain degree of precision, quantified by an (error) parameter $\alpha \in (0,1)$. By decreasing the size of $\alpha$, thereby aiming at a more precise control on the relative volume at a certain scale, the size of the region where the density is uniformly well-behaved at that scale is in turn reduced (see~\eqref{eq:r_N}). As $N$ tends to infinity, we require $\alpha$ to depend on $N$ in such a way that this well-behaved region remains sufficiently large (see \eqref{eq:alpha_N}). Furthermore, to uniformly control the accumulated error as the ``resonance strength'' $J$ increases, the precision parameter $\alpha$ must also depend on $J$.
For that reason, a more quantitative approach in which $J = J_N$ slowly tends to infinity with $N$ is then necessary. The key insight is that for any $\omega \in \Omega_{\mathrm{reg}} \subseteq \{0,1\}^{\mathbb{Z}^d}$, a set of full $\mathbb{P}^u$-measure, and any $(J_N)_{N \geq 0}$ with $J_N \nearrow \infty$, one has
\begin{equation}
\lim_{N \rightarrow \infty} \sup_{\ell_*} \sup_{U_0\in \cU_{\ell_*,N}^\omega} \sup_{x\in A_N \cap \cS_\infty} P_x^\omega \left[H_{\Res^\omega(U_0,I(J_N),J_N,L(J_N),\ell_\ast)}=\infty\right] = 0,
\end{equation}
where the supremum over $\ell_\ast$ is taken subject to the compatibility condition~\eqref{eq:compatible}, which depends also on $\omega \in \Omega_{\mathrm{reg}}$. 
 \medskip

Let us emphasize that we have chosen to formulate the results above for (strongly) supercritical \textit{site} percolation models under regularity assumptions \refPone--\refPtwo, \refD, and \refSone--\refStwo, however our approach can be generalized straightforwardly to (strongly) supercritical \textit{bond} percolation models, see Remark~\ref{rem:Applications-generalizations} (1). The most important case is Bernoulli bond percolation on $\mathbb{Z}^d$, $d \geq 3$, to which our methods apply as well. Moreover, we highlight that our approach does \textit{not} rely on precise quantitative homogenization techniques, but only requires a well-behaved \textit{geometry} of the underlying graph in terms of heat kernel bounds and volume regularity. In particular, the bounds~\eqref{eq:Solidification-intro} and~\eqref{eq:Capacity-lower-bound-intro} remain valid if the edges of the subgraph of $(\mathbb{Z}^d,\mathbb{E}^d)$ induced by $\mathcal{S}_\infty$ are equipped with \textit{any} choice of uniformly elliptic weights, see Remark~\ref{rem:Applications-generalizations} (2) for more on this aspect. We also stress that the bounds derived in this work ought to be pertinent to obtain upper bounds on several atypical disconnection- or isolation-type events for analogues of the models considered in~\cite{chiarini2020GFF,chiarini2020entropic,
CN2021disconnection,nitzschner2018disconnection,
nitzschner2017solidification, sznitman2019macroscopic}, in the non-elliptic random environments considered here, and we return to questions of this type elsewhere~\cite{chiarini2025-in-prep}. \medskip

The organization of this article is as follows. In Section~\ref{ref:Notation-Assumptions}, we introduce further notation and state the formal definitions of the assumptions \refPone--\refPtwo, \refD, and \refSone--\refStwo posed for the underlying percolation models, as well as quenched Gaussian heat kernel upper and lower bounds from~\cite{alves2019} or~\cite{sapozhnikov2017long-range}, which will be instrumental for the development of the multi-scale techniques in subsequent sections. In Section~\ref{sec:large-scale} we quantify the (uniform) regularity properties of the percolation cluster. In particular, we single out a set $\Omega_{\mathrm{reg}}^{\alpha,\vartheta}$ (depending on parameters $\alpha,\vartheta \in (0,1)$ characterizing the degree of irregularity) of ``good percolation configurations'', cf.~\eqref{eq:Omega_reg}, of full $\mathbb{P}^u$-measure which will guarantee the required large-scale uniformity to implement the multi-scale argument in the following section. The main quenched asymptotic absorption estimates~\eqref{eq:Solidification-intro} are proved in Section~\ref{sec:solidification}, see Theorem~\ref{thm:solidification}. In the Appendix~\ref{sec:proof-of-QCV} we give a proof of a volume regularity property for percolation clusters with long-range correlations, slightly adapted from~\cite{sapozhnikov2017long-range}. \medskip

Finally, we state our convention concerning constants that we use throughout the remainder of the text. We will denote by $C$, $c$, $c'$, $\ldots$ positive constants with values that can change from place to place. Numbered constants $c_0$, $c_1$, $\ldots$ retain their value assigned to them at the place of their first occurrence. All constants may implicitly depend on the dimension. Dependence of constants on further parameters will be made explicit in the notation. In Sections~\ref{sec:large-scale} and ~\ref{sec:solidification}, we will fix the parameter $u$ governing the site percolation measure $\mathbb{P}^u$, and allow all constants to implicitly depend on $u$.

\section{Notation and model assumptions on percolation clusters}
\label{ref:Notation-Assumptions}

We begin this section by introducing further notation that will be used throughout the article. 
Following this, we present the main assumptions \refPone--\refPtwo, \refD, and \refSone--\refStwo, which characterize the probability measures $(\mathbb{P}^u)_{u \in (a,b)}$ for some $0 < a< b< \infty$ governing the relevant percolation models and the structure of their respective (almost surely unique) infinite connected component. 
Our set of assumptions corresponds to the generic framework of \cite{alves2019}, see also Remark~\ref{rmk:on-assumpt} concerning the generality of the latter. Importantly, a broad class of percolation models on $\bbZ^d$, $d\geq 3$, with strong, algebraically decaying correlations, satisfies these assumptions. We provide some pertinent examples in Remark~\ref{rmk:examples}. 
Finally, we conclude this section with a brief discussion of some properties of random walks on infinite percolation clusters.

\subsection{Notation}\label{subsec:notation}

We let $\bbN  = \{0,1,2,...\}$ stand for the set of natural numbers, and $\mathbb{Z}_+ = \{1,2,...\}$ for the set of positive integers. 
For real numbers $s, t$, we let $s \wedge t$ and $s \vee t$ stand for the minimum and maximum of $s$ and $t$, respectively, and we denote by $\lfloor s \rfloor$ the integer part of $s$ and by $\lceil s \rceil$ the next integer greater or equal to $s$, when $s$ is non-negative. For two sequences $(a_n)_{n\in \bbN}$ and $(b_n)_{n\in \bbN}$ of positive real numbers, we write $a_n=o(b_n)$ if $\lim_{n\to\infty}|a_n/b_n|=0$, $a_n\gg b_n$ if $\lim_{n\to\infty}|a_n/b_n|=\infty$, and $a_n=O(b_n)$ if $\limsup_{n\to\infty}|a_n/b_n| <\infty$. We write $a_n \nearrow a$ if $a_n \leq a_{n+1}$ for every $n \in \mathbb{N}$ and $a_n\to a$ as $n\to\infty$ (with $a \in \mathbb{R} \cup \{+\infty\}$), and similarly  $b_n \searrow b$ if $b_n \geq b_{n+1}$ for every $n \in \mathbb{N}$ and $b_n \to b$ as $n \to \infty$ (with $b \in \mathbb{R} \cup \{-\infty\}$). \smallskip

Unless otherwise stated, we assume $d\geq 3$ throughout the article. Occasionally we may also allow $d = 2$, especially when we provide examples satisfying the assumptions (see Examples \ref{it:bernoulli}, \ref{it:membrane}, \ref{it:FF}).
In fact, part of our approach remains applicable when $d=2$, but substantial modifications to the set-up would be necessary due to the recurrence of the underlying random walk; see also the discussion preceding  \cite[(2.1)]{nitzschner2017solidification}. \smallskip

We denote by $| \cdot |$, $| \cdot |_1$, and $| \cdot |_\infty$ the Euclidean, $\ell^1$-, and $\ell^\infty$-norms on $\bbR^d$, respectively. For $x\in \bbR^d$ and $r \geq 0$, we write $B_{\bbR^d}(x,r)$ for the closed $\ell^\infty$-ball in $\mathbb{R}^d$ with radius $r$, centred at $x$. When $A$ is a subset of $\bbR^d$, we write $\mathring{A}$ for its interior. For $x \in \bbZ^d$ and $r \geq 0$, we furthermore write $B(x,r) = \{y \in \bbZ^d \, : \, |x-y|_\infty \leq r\} \subseteq \bbZ^d$ for the (closed) $\ell^\infty$-ball of radius $r$ and center $x$. 
If $x,y \in \bbZ^d$ fulfill $|x - y| = 1$, we call them neighbors and write $x \sim y$. A function $\pi : \{ 0, ...,n \} \rightarrow \bbZ^d$ is called a nearest-neighbor path (of length $n \geq 1$) if $\pi_i \sim \pi_{i+1}$ for all $0 \leq i \leq n -1$. For $K \subseteq \bbZ^d$, we let $|K|$ stand for the cardinality of $K$, and we write $K \subset \subset L$ if $K\subseteq L$ and $|K| < \infty$. 
Moreover, if $K \subseteq L$ we write $\partial_L K = \{ y \in L \setminus K \, : \, y \sim x \text{ for some } x \in K \}$ for the external boundary of $K$ relative to $L$, and $\partial_{L,\text{in}}K = \{y \in K \, : \, y \sim x \text{ for some } x \in L \setminus K \}$ for the internal boundary of $K$ relative to $L$. For $A \subseteq \bbZ^d$, we denote the set of edges in $A$ by $E(A) = \{\{x,y\} \, : \, x,y \in A, x \sim y \}$ and use $\mathbb{E}^d = E(\bbZ^d)$  as a shorthand notation. 
For a set $D \subseteq \bbR^d$ we denote by $D_N = (ND) \cap \bbZ^d$ its discrete blow-up.

  \medskip

  We now introduce some notation pertaining to the site-percolation models we study below. Consider the set $\Omega = \{0,1\}^{\bbZ^d}$ equipped with the $\sigma$-algebra $\cF$ generated by the coordinate maps $\{\omega \mapsto \omega(x)\}_{x\in \bbZ^d}$. Let $(\Psi_x)_{x\in \bbZ^d}$ be the group of lattice shifts on $\Omega$, defined by 
  \begin{equation}
    \Psi_x: \Omega\to \Omega, \qquad (\Psi_x \omega) (y) = \omega(y+x) \text{ for all $\omega\in \Omega$ and $x,y\in \bbZ^d$}.
  \end{equation}
For a given $\omega \in \Omega$, sites $x \in \mathbb{Z}^d$ with $\omega(x)= 1$ are declared open, and we denote the induced subset of open sites of $\bbZ^d$ by 
  \begin{equation}
  \cS = \cS(\omega) = \left\{x\in \bbZ^d \,:\, \omega(x)=1\right\}\subseteq \bbZ^d.
  \end{equation}
  We view $(\cS,E(\cS))$ as a subgraph of $(\bbZ^d,\mathbb{E}^d)$, retaining all edges connecting two vertices of $\cS$ of $\ell^1$-distance one. For $r\in [0,\infty]$, we denote by $\cS_r$ the set of vertices of $\cS$ which are in connected components of $(\cS,E(\cS))$ of $\ell^1$-diameter greater or equal to $r$. In particular, $\cS_\infty$ is the set of vertices in infinite connected component(s) of $\cS$. 
  We also view $(\cS_r,E(\cS_r))$, $r\in [0,\infty]$, as a subgraph of $(\bbZ^d,E(\bbZ^d))$. For $x,y\in \cS_\infty$, we define the weights
  \begin{equation}
  \label{eq:Weights-given-mu}
      \mu_{\{x,y\}} = \begin{cases}
          1, & \{x,y\}\in E(\cS_\infty),\\
          0, & \text{otherwise},
      \end{cases}
      \qquad
      \mu_{x} = \sum_{y \in \mathcal{S}_\infty} \mu_{\{x,y\}},  \ x \in \cS_\infty,
  \end{equation}
  and extend them to measures on $E(\cS_\infty)$ and $\cS_\infty$, respectively.  \smallskip

For $\omega, \omega' \in \Omega$ we write $\omega \leq \omega'$ if $\omega(x) \leq \omega'(x)$ for all $x \in \bbZ^d$. An $\cF$-measurable function $f : \Omega \rightarrow \bbR$ is called increasing if $\omega \leq \omega'$ implies $f(\omega) \leq f(\omega')$, and decreasing if the function $-f$ is increasing. An event $G\in \cF$ is called increasing (resp., decreasing) if $\IND_G$ is increasing (resp., decreasing).

\subsection{Model assumptions and examples}
\label{subsec:ModelAssumptions}

In this subsection, we state the fundamental assumptions \refPone--\refPtwo, \refD, and \refSone--\refStwo on the underlying percolation models of interest. The first instance of assumptions of this type goes back to~\cite{drewitz2014chemical}.
Several geometric and probabilistic properties, such as a shape theorem, quenched invariance principle, and Gaussian bounds for heat kernels have been established for models fulfilling these assumptions in, e.g., \cite{drewitz2014chemical,procaccia2016,sapozhnikov2017long-range}, see also~\cite{andres2024first} for results on first-passage percolation (albeit under a slightly different assumption replacing~\refD concerning the decoupling). We refer to the introduction of~\cite{sapozhnikov2017long-range} for a summary of results. In fact, the statements in~\cite{drewitz2014chemical,procaccia2016,sapozhnikov2017long-range} were first established under a stronger decoupling assumption~\refPthree, but these results were proved in~\cite{alves2019} to remain valid when the latter assumption is relaxed to the slightly weaker version~\refD (see also Remark~\ref{rmk:on-assumpt}). These assumptions capture several essential properties of (correlated) percolation models under which the geometry of the infinite cluster resembles that of the integer lattice on large scales.  \smallskip

On $(\Omega, \cF)$ we consider a family of probability measures $\{\bbP^u\,:\,u\in (a,b)\}$ with $0<a<b<\infty$ satisfying:
\begin{itemize}
\item[\textbf{P1}] (\textit{Ergodicity})\phantomsection\label{P1}
For each $u\in(a,b)$ and $x\in \bbZ^d$, the family of lattice shifts $(\Psi_x)_{x \in \mathbb{Z}^d}$ on $\Omega$ is measure-preserving and ergodic with respect to $\bbP^u$.
\item[\textbf{P2}] (\textit{Monotonicity})\phantomsection\label{P2}
For $a<u<u'<b$, and any increasing event $G\in\cF$, 
$\bbP^u[G] \leq \bbP^{u'}[G]$.
\item[\textbf{D}] (\textit{Decoupling}) \phantomsection\label{D}
There exist $\beta,\gamma,\zeta>0$ such that for $i\in \{1,2\}$, $L,s\in \bbZ_+$, $x_1,x_2\in \bbZ^d$ with $|x_1-x_2|_\infty =sL$, and $a<u<u'<b$, the following hold:
  \begin{itemize}
    \item[(a)] if $A_i\in \sigma(\{\omega\mapsto\omega(y)\} \,:\, y\in B(x_i,L))$ are increasing events, then 
    \begin{equation}
  \label{eq:Decoup-A}
    \bbP^u [A_1\cap A_2] \leq \bbP^{u'}[A_1]\bbP^{u'}[A_2] + C \exp\left(-c\min \left\{(u'-u)^\beta s^\gamma, e^{(\log L)^\zeta}\right\}\right),
    \end{equation}
    \item[(b)] if $B_i\in \sigma(\{\omega\mapsto\omega(y)\} \,:\, y\in B(x_i,L))$ are decreasing events, then 
    \begin{equation}
    \label{eq:Decoup-B}
    \bbP^{u'} [B_1\cap B_2] \leq \bbP^u[B_1]\bbP^{u}[B_2] + C \exp\left(-c\min \left\{(u'-u)^\beta s^\gamma, e^{(\log L)^\zeta}\right\}\right).
    \end{equation}
  \end{itemize}
\item[\textbf{S1}] (\textit{Local uniqueness})\phantomsection\label{S1}
There exists a function $f_{\text{S}}:(a,b)\times\bbZ_+\to \bbR$ such that for any $u\in(a,b)$,
\begin{equation}\label{eq:funcS}
\begin{minipage}{0.8\linewidth}
there exists $\DeltaS = \DeltaS(u)>0$ and $R_{\text{S}} = R_{\text{S}}(u)<\infty$ 
such that $f_{\text{S}}(u,R) \geq (\log R)^{1+\DeltaS}$ for all $R\geq R_{\text{S}}$,
and for any $u\in(a,b)$ and $R\geq 1$, one has:
$$
\mathbb P^u\left[ \, 
\cS_R\cap B(0,R) \neq \varnothing \,
\right]
\geq 
1 - e^{-f_{\text{S}}(u,R)}, \text{ and} 
$$

$$
\bbP^u\left[
\begin{array}{c}
\text{for all $x,y\in\cS_{\scriptscriptstyle{R/10}}\cap B(0,R)$,}\\
\text{$x$ is connected to $y$ in $\cS\cap B(0,2R)$}
\end{array}
\right]
\geq 1 - e^{-f_{\text{S}}(u,R)}.
$$ 
\end{minipage}
\end{equation}
\item[\textbf{S2}] (\textit{Continuity})\phantomsection\label{S2}
The percolation density $\eta : (a,b) \rightarrow [0,1]$, defined by
\begin{equation}\label{eq:eta}
\eta(u) \stackrel{\mathrm{def}}{=} \bbP^u\left[0\in\cS_\infty\right],
\end{equation}
is strictly positive and continuous. 
\end{itemize}
It is a standard consequence of \refSone via a union bound argument that the set $\cS_\infty$ is $\bbP^u$-a.s.~non-empty and connected (see, e.g.,~\cite[(2.8)]{drewitz2014chemical} or~\cite[Remark 1.9(2)]{sapozhnikov2017long-range}), and we refer to it as the infinite cluster of the site percolation induced by $\omega$. We denote 
\begin{equation}
\label{eq:Omega-0}
  \Omega_0 \stackrel{\mathrm{def}}{=} \{\omega\in \Omega \,:\, \mbox{$\cS_\infty(\omega)$ is nonempty and connected}\},
\end{equation}
so $\mathbb{P}^u[\Omega_0] = 1$ for any $u \in (a,b)$.
\begin{remark}\label{rmk:on-assumpt}
  Before the assumption \refD was introduced in \cite{alves2019}, 
  several results had been obtained with \refD replaced by the following stronger decoupling condition:
  \begin{itemize}
  \item[\textbf{P3}] (\textit{Stronger decoupling})\phantomsection\label{P3}
    For $i\in\{1,2\}$, let $A_i\in\sigma(\{\omega\mapsto\omega(y)\} \, : \, {y\in B(x_i,10L)})$ be decreasing events, and let 
  $B_i\in\sigma(\{\omega\mapsto\omega(y)\} \,:\, {y\in B(x_i,10L)})$ be increasing events, where $L \in \mathbb{Z}_+$ and $x_1,x_2\in\bbZ^d$.
  There exist $R_{\text{P}},L_{\text{P}} <\infty$ and $\epsilon_{\text{P}},\chi_{\text{P}}>0$ such that for any integer $R\geq R_{\text{P}}$ and $a<\widehat u<u<b$ with 
  \[
  u\geq \left(1 + R^{-\chi_{\text{P}}}\right)\cdot \widehat u,
  \]
  and assuming $|x_1 - x_2|_\infty \geq R L$, one has
  \[
  \bbP^u\left[A_1\cap A_2\right] \leq 
  \bbP^{\widehat u}\left[A_1\right] \cdot
  \bbP^{\widehat u}\left[A_2\right] 
  + e^{-f_{\text{P}}(L)} ,\
  \]
  and
  \[
  \bbP^{\widehat u}\left[B_1\cap B_2\right] \leq 
  \bbP^u\left[B_1\right] \cdot
  \bbP^u\left[B_2\right] 
  + e^{-f_{\text{P}}(L)} ,\
  \]
  where $f_{\text{P}} : \mathbb{Z}_+ \rightarrow \mathbb{R}$ fulfills $f_{\text{P}}(L) \geq e^{(\log L)^{\epsilon_{\text{P}}}}$ for all $L\geq L_{\text{P}}$.
  \end{itemize}
In fact, \refPthree implies \refD by \cite[Remark 6.1(1)]{alves2019}, and the motivation to introduce \refD in \cite{alves2019} is due to the fact that the models considered in the latter satisfy \refD but fail to satisfy \refPthree. We refer to Remarks 6.1--6.3 in \cite{alves2019} for more discussions on the assumption.
\end{remark}

\begin{remark}[Examples]\label{rmk:examples}
    We first collect some examples that satisfy \refPone--\refPthree and \refSone--\refStwo, which can be found in \cite[Section III]{drewitz2014chemical}. 
    \begin{enumerate}[label=(\arabic*),leftmargin=*]
        \item \label{it:bernoulli}Bernoulli site percolation in $d\geq 2$. In this case, $\bbP^u$ is the product measure of a Bernoulli distribution with parameter $u$ standing for the probability of any given site to be open. Formally, one has $\bbP^u[\omega(x)=1]=1-\bbP^u[\omega(x)=0]=u$ with i.i.d.~$\{\omega \mapsto \omega(x)\}_{x \in \mathbb{Z}^d}$. The assumptions \refPone--\refPthree and \refSone--\refStwo hold for $u\in(a,b)$ with $p_c(d)<a<b<1$, where $p_c(d)$ is the critical parameter for site percolation. We refer to \cite{grimmett1999} for a thorough treatment of Bernoulli percolation.
        \item \label{it:RI}Random interlacements in $d\geq 3$. This model was introduced in \cite{sznitman2010}. In this case, $\bbP^u$ is characterized by the equation 
        \[
        \bbP^u[\cS\cap K =\varnothing] = e^{-u \capa(K)}, \quad \mbox{for all $\varnothing \neq K\subset\subset \bbZ^d$},
        \]
        where $\capa(K)$ denotes the capacity of $K$ (see \eqref{eq:capacity} and below).
        The assumptions \refPone--\refPthree and \refSone--\refStwo hold for $u\in(a,b)$ for any $0<a<b<\infty$, by~\cite{rath2011transience,sznitman2010, sznitman2012decoupling}. 
        \item \label{it:vacant-RI}Vacant set of random interlacements in $d\geq 3$. In this case, $\bbP^u=\mathrm{P}^{1/u}$ (the transformation $u\mapsto 1/u$ is implemented to align with the monotonicity in \refPtwo) and $\mathrm{P}^{v}$ is characterized by the equation
        \[
        \mathrm{P}^{v}[K\subseteq \cS] = e^{-v  \capa(K)}, \quad \mbox{for all $\varnothing \neq K\subset\subset \bbZ^d$}.
        \]
        Owing to the series of papers \cite{hugo2024,
        duminilcopin2023finiterangeinterlacementscouplings,
        duminilcopin2023phasetransitionvacantset,goswami2025stronglocaluniquenessvacant}, the equality of certain critical parameters governing the percolation phase transition has been established in~\cite[Theorem 1.1]{goswami2025stronglocaluniquenessvacant}. This shows in combination with~\cite{drewitz2014local,sznitman2010,
        sznitman2012decoupling,teixeira2009uniqueness}
         that \refSone holds for the entire supercritical phase, and the assumptions \refPone--\refPthree and \refSone--\refStwo are satisfied for $u\in (a,b)$ with $\frac{1}{u_{*}(d)}<a<b<\infty$, where $u_*(d)$ is the critical parameter for the percolation phase transition of the vacant set of random interlacements. 
        \item \label{it:GFF}Level-sets of the Gaussian Free Field in $d\geq 3$. In this case, $\bbP^u=\mathrm{P}^{h_*(d)-u}$, where $\mathrm{P}^h$ for $h\in \bbR$ is the law of the upper level-set $E^{\geq h}_{\mathrm{GFF}}$ under the measure governing a Gaussian Free Field and $h_*(d)$ is the critical threshold for the percolation phase transition (see~\cite{bricmont1987percolation,rodriguez2013phase}).  Using the equality of the critical parameters characterizing the percolation phase transition of $E^{\geq h}_{\mathrm{GFF}}$, see~\cite{hugo2023equality}, \refSone holds for the entire supercritical phase and assumptions \refPone--\refPthree and \refSone--\refStwo are satisfied for $u\in (a,b)$ with $0 < a <b< \infty$.
    \end{enumerate}
    We also collect some further models in the literature satisfying \refPone--\refPthree and \refSone--\refStwo.
    \begin{enumerate}[resume,label=(\arabic*),leftmargin=*]
        \item\label{it:membrane} Level-sets of the Gaussian membrane model in $d\geq 5$. Here, $\bbP^u=\mathrm{P}^{\overline{h}(d)-u}$, where $\mathrm{P}^h$ for $h\in \bbR$ is the law of the upper level-set $E^{\geq h}_{\mathrm{GMM}}$ under the measure governs a Gaussian membrane model (see \cite{CN23GMM} for its detailed description) and $\overline{h}(d)$ is a critical parameter such that $E^{\geq h}_{\mathrm{GMM}}$ is in a strongly percolative regime for $h < \overline{h}(d)$ (see~\cite[(5.4)]{CN23GMM} for its precise definition). It is known by the proof of Theorem 5.1 in \cite{CN23GMM} that $-\infty< \overline{h}(d)\leq h_*(d)$ (with $h_*(d)$ the percolation threshold for $E^{\geq h}_{\mathrm{GMM}}$) and the assumptions \refPone--\refPthree and \refSone--\refStwo hold for $u\in (a,b)$ with $0 <a<b< \infty$. 
        \item \label{it:FF} Level-sets of fractional Gaussian fields in $d \geq 2$.  As in the previous example, here one has $\mathbb{P}^u = \mathrm{P}^{\overline{h}(d) - u}$, where $\mathrm{P}^h$ governs for $h \in \mathbb{R}$ the law of the upper level-set $E^{\geq h}_{\mathrm{FF}}$ of a fractional Gaussian field in $d \geq 2$, with covariances given by the Green function of an isotropic, $\alpha$-stable random walk, with $\alpha \in (0, 2 \wedge d)$, see~\cite{bolthausen1995entropic,
        chiarini2016extremes} for a precise definition of these Gaussian fields. As before, $\overline{h}(d)$ is the critical parameter such that $E^{\geq h}_{\mathrm{FF}}$ is in a strongly percolative regime for $h < \overline{h}(d)$. As outlined in~\cite[Remark 3.4(2)]{CN23GMM}, the phase transition for $E^{\geq h}_{\mathrm{FF}}$ follows analogously to that of the membrane model, and \refPone--\refPthree and \refSone--\refStwo hold for $u\in (a,b)$ with $0 <a<b< \infty$.
        \item\label{it:GL} Level-sets of the Ginzburg--Landau $\nabla \phi$-model in $d\geq 3$. Here, $\bbP^u=\mathrm{P}^{\overline{h}(d)-u}$, where $\mathrm{P}^h$ for $h\in \bbR$ is the law of the level-set $E^{\geq h}_{\mathrm{GL}}$ under the measure governing a Ginzburg--Landau $\nabla \phi$-model (see \cite{rodriguez2016nabla} for its detailed description) and $\overline{h}(d)$ is the threshold for the strongly percolative regime (see (4.27) in \cite{rodriguez2016nabla} for its precise definition). It is known by Remark 4.7 and the proof of Theorems 4.8 and 4.9 in \cite{rodriguez2016nabla} that $-\infty< \overline{h}(d)\leq h_*(d)$ (with $h_*(d)$ again denoting the corresponding percolation threshold) and the assumptions \refPone--\refPthree and \refSone--\refStwo hold for $u\in (a,b)$ with $0<a<b<\infty$. 
    \end{enumerate}
    It is plausible but still open at the moment that $\overline{h}(d)=h_*(d)$ for each of the models in Examples \ref{it:membrane}--\ref{it:GL} (incidentally, we refer to~\cite{muirhead2024sharpness} for a result on the subcritical sharpness for a general class of Gaussian models). If the two critical parameters $h_\ast(d)$ and $\overline{h}(d)$ are proved to coincide for any specific model in Examples \ref{it:membrane}--\ref{it:GL}, our results will thus apply to the entire supercritical phase of the respective model. 
    
    The subsequent examples satisfy \refPone--\refPtwo, \refD, and \refSone--\refStwo as established in~\cite{alves2019}, and we refer to the latter for a more detailed descriptions of the models.
    \begin{enumerate}[resume,label=(\arabic*),leftmargin=*]
        \item\label{it:loop-soup} Range of the random walk loop soup in $d\geq 3$. In this case, $\bbP^u$ is the law of the trace on $\mathbb{Z}^d$ of a Poisson point process of loops $\mathscr{L}^u$ with intensity measure $u\mu$, and $\mu$ is a certain loop measure on the set of equivalence classes of based loops $\cL$. It follows from~\cite[Theorem 1.1]{alves2019} and~\cite{chang2017} that the assumptions \refPone--\refPtwo, \refD, and \refSone--\refStwo hold for $u\in (a,b)$ with $\alpha_c<a<b<\infty$, where $\alpha_c$ is the critical parameter for percolation phase transition.
        \item\label{it:vacant-loop-soup}Vacant set of the random walk loop soup in $d\geq 3$. In this case, $\bbP^u=\mathrm{P}^{\overline\alpha-u}$, where $\mathrm{P}^{\alpha}$ for $\alpha>0$ is the law of the vacant set $\mathscr{V}^\alpha$ of random walk loop soup and $\overline\alpha(d)$ is the critical parameter for the strong percolating regime, which can be defined similarly as in Examples \ref{it:membrane}--\ref{it:GL}; see also \cite[Theorem 1.3]{alves2019} for its positivity.
        By Remark 6.6 in \cite{alves2019}, the assumptions \refPone--\refPtwo, \refD, and \refSone--\refStwo are satisfied for $u\in (a,b)$ with $0<a<b<\overline\alpha$. It is plausible but open at the moment that $\overline\alpha(d)=\alpha_*(d)$.
    \end{enumerate}
\end{remark}

Although we have formulated our assumptions for site percolation models, our main result could similarly be stated for appropriate bond percolation models under suitable modifications of the assumptions above. In particular, our analysis extends directly to supercritical Bernoulli bond percolation on $\mathbb{Z}^d$, $d \geq 3$, see Remark~\ref{rem:Applications-generalizations}~(1) for further discussion.

\subsection{Random walks on the infinite percolation cluster} \label{subsec:random-walks}
 
Unless otherwise stated, we consider a fixed configuration $\omega\in \Omega_0$ throughout this entire subsection (recall~\eqref{eq:Omega-0}). We write $\Gamma(\mathbb{R}_{\geq 0},\mathbb{Z}^d)$ for the space of right-continuous, piecewise constant functions from $\mathbb{R}_{\geq 0} = [0,\infty)$ to $\mathbb{Z}^d$ that have finitely many jumps on any bounded interval. We let $X=(X_t)_{t\geq 0}$ be the canonical process on $\Gamma(\mathbb{R}_{\geq 0},\mathbb{Z}^d)$, which is distributed under $P_x^\omega$, $x \in \mathcal{S}_\infty$, as a continuous-time, constant-speed simple random walk on $\cS_\infty(\omega)$ starting from $x$, where $\cS_\infty(\omega)\subseteq \bbZ^d$ is the infinite percolation cluster introduced in Subsection \ref{subsec:notation}. Formally, $X$ can be viewed as a Markov process with generator 
\begin{equation}
\cL_{\cS_\infty} f(x)=\frac{1}{\mu_x}\sum_{y\in \cS_\infty}\mu_{\{x,y\}}(f(y)-f(x)), \quad x\in \cS_\infty,
\end{equation}
with $\mu_x, \mu_{\{x,y\}}$ as in~\eqref{eq:Weights-given-mu}, which waits an exponential time with mean one at each vertex and jumps to a uniformly chosen nearest neighbor. \smallskip

Let $(\theta_t)_{t\geq 0}$ stand for the family of canonical time-shifts on $\Gamma(\mathbb{R}_{\geq 0}, \mathbb{Z}^d)$, i.e., $\theta_t w(\cdot)=w(t+\cdot)$ for $w\in \Gamma(\bbR_{\geq 0}, \bbZ^d)$ and $t \geq 0$. Let $\zeta_1 = \inf\{t \geq 0 \, : \, X_t \neq X_0\}$ denote the time of the first jump of $X$ (with the convention $\inf \varnothing=\infty$). Given $U \subseteq \mathcal{S}_\infty$, we introduce stopping times (with respect to the canonical right-continuous filtration $(\cF_t)_{t \geq 0}$ generated by $(X_t)_{t \geq 0}$) $H_U = \inf\{ t \geq 0 \, : \, X_t \in U\}$, $\widetilde{H}_U = \inf\{ t > \zeta_1 \, : \, X_t \in U\}$, and $T_U = \inf \{ t \geq 0 \, : \, X_t \notin U \}$, which are the entrance, hitting, and exit times of $U$. We also introduce for $r\geq 0$ the stopping time 
\begin{equation}\label{eq:tau_r}
  \tau_r = \inf\{ t \geq 0 \, : \, |X_t-X_0|_\infty \geq r\}.
\end{equation}

We now introduce the heat kernel $q^\omega_t$ and the heat kernel $q^{\omega}_{t,U}$ killed upon leaving $U \subseteq \mathcal{S}_\infty$ of the  random walk $X$ as
\begin{align}
\label{eq:HeatKernelDef}
  q^\omega_t(x,y) & = \frac{P^\omega_x[X_t = y]}{\mu_y},\quad  t \geq 0,\, x,y \in \mathcal{S}_\infty, \text{ and} \\
  q^{\omega}_{t,U}(x,y) & = \frac{P^\omega_x[X_t = y, T_U > t]}{\mu_y},\quad  t \geq 0,\, x,y \in \mathcal{S}_\infty.\label{eq:KilledHeatKernel}
\end{align}
Both $q_t^\omega(\cdot,\cdot)$ and $q_{t,U}^\omega(\cdot,\cdot)$ are symmetric in the exchange of their arguments. \medskip 

It is proved in~\cite[Corollary 6.5]{alves2019} (see also~\cite[Theorem 1.15]{sapozhnikov2017long-range}) that if the assumptions \refPone--\refPtwo, \refD, and \refSone--\refStwo hold, then for any $\varepsilon>0$ there exists a family of random variables $\{\cT(x,\varepsilon,\cdot)\}_{x\in \bbZ^d}$, and constants $c_{\text{hk}i}=c_{\text{hk}i}(\varepsilon)$, $i=1,\ldots,6$, such that for $\bbP^u$-almost every $\omega$, $x\in \cS_\infty(\omega)$, $\cT(x,\varepsilon,\omega)$ is finite, and for $t\geq \cT(x,\varepsilon,\omega)\vee |x-y|_1^{1+\varepsilon}$ and $y\in \cS_\infty(\omega)$, 
\begin{equation}\label{eq:HK}
    c_{\text{hk}1} t^{-d/2} \exp\left(-c_{\text{hk}2} \frac{|x-y|_1^2}{t}\right) \leq q_t^\omega(x,y) \leq c_{\text{hk}3} t^{-d/2} \exp\left(-c_{\text{hk}4} \frac{|x-y|_1^2}{t}\right)
  \end{equation} 
holds. Moreover, for all $z\in \bbZ^d$ and $r\geq 1$,
\begin{equation}\label{eq:HK-time}
  \bbP^u [\cT(z,\varepsilon,\cdot) \geq r] \leq c_{\text{hk}5}e^{-c_{\text{hk}6}(\log r)^{1+\DeltaS}},
\end{equation}
where $\Delta_\text{S}$ is defined in \eqref{eq:funcS}. We will apply the result above with $\varepsilon=\frac{1}{2}$ and abbreviate $c_{\text{hk}i}(\frac{1}{2})$, $\cT(\cdot,\frac{1}{2},\omega)$ by $c_{\text{hk}i}$, $\cT(\cdot,\omega)$ for $i=1,\ldots,6$, respectively. \smallskip

We conclude this section by introducing some potential-theoretic notions for the random walk on $\mathcal{S}_\infty$. For the remainder of this section, we assume that $\omega$ is in a subset of $\Omega_0$ of full $\mathbb{P}^u$-measure under which~\eqref{eq:HK} holds, and that $d \geq 3$. We define for $x,y \in \mathcal{S}_\infty(\omega)$ the Green function of the random walk on $\mathcal{S}_\infty$ by
\begin{equation}
g^\omega(x,y) = \frac{1}{\mu_y}\left[\int_0^\infty \IND_{\{X_t = y \}} \mathrm{d}t \right] = \int_0^\infty q_t^\omega(x,y) \mathrm{d}t,
\end{equation} 
which is $\bbP^u$-a.s.~finite by~\eqref{eq:HK} and symmetric (in particular, $X$ is transient on $\mathcal{S}_\infty$). We note in passing that one has in fact quantitative bounds on the decay of $g^\omega$ for $\bbP^u$-a.e.~$\omega$, see~\cite[Theorem 1.17]{sapozhnikov2017long-range}, although we will not need such strong controls. For $\varnothing \neq A\subset\subset \mathcal{S}_\infty$, we define the equilibrium measure of $A$ on $\cS_\infty$ by
\begin{equation}\label{eq:equilibrium}
  e_A^\omega(x) = P_x^\omega [\widetilde{H}_A=\infty]\mu_x \IND_A(x), \quad x\in \mathcal{S}_\infty,
\end{equation}
and denote its (finite) total mass, the capacity of $A$, by
\begin{equation}\label{eq:capacity}
  \capa^\omega(A) = \sum_{x\in A} e_A^\omega(x)
\end{equation}
(when $\omega = \boldsymbol{1}$, the constant function assigning $1$ to every $x \in \mathbb{Z}^d$, we drop $\omega$ from the notation, and note that this coincides with the classical simple random walk capacity on $\bbZ^d$). We record for later use that one has for $\varnothing \neq A \subset \subset \mathcal{S}_\infty$ the last-exit decomposition
\begin{equation}
\label{eq:Last-exit}
P^\omega_x[H_A < \infty] = \sum_{y \in A } g^\omega(x,y) e_A^\omega(y), \qquad x \in \cS_\infty,
\end{equation}
see, e.g.,~\cite[Proposition 7.10]{barlow2017heatkernel}. 

\section{Uniform large-scale behavior over very large regions}\label{sec:large-scale}

We will demonstrate in this section that both the heat kernel and the relative volume density of the cluster exhibit controlled behavior over many scales, uniformly over sizeable boxes. This will be instrumental in Section \ref{sec:solidification} for the development of a ``one-step estimate'' (see Proposition \ref{prop:positive-probability-2}).
We begin by quantifying the mixing property of the relative volume density, in which a convergence rate which is faster than any polynomial is obtained. The proof builds on the framework developed in \cite{sapozhnikov2017long-range} and is detailed in Appendix~\ref{sec:proof-of-QCV}. 
The spatial uniformity we need will then follow readily from the quantitative control on the volume regularity and the previously stated heat kernel estimates \eqref{eq:HK}--\eqref{eq:HK-time}.
Finally, we conclude the section with a heuristic overview of the results and explain how they will be applied in the proof of the solidification estimates.

\medskip

We assume $d\geq 2$ throughout this section and work under the following assumption throughout the remainder of the present article, unless stated otherwise.
\begin{assumption}\phantomsection\label{A}
  The family of measures $\{\bbP^u\,:\,u\in (a,b)\}$ on $(\Omega, \cF)$ satisfies assumptions \textnormal{\refPone--\refPtwo}, \textnormal{\refD} and \textnormal{\refSone--\refStwo}.
\end{assumption}
For $x\in \bbZ^d$, we denote $\bbP^u[~\cdot \mid x\in \cS_\infty]$ by $\bbP^u_x[~\cdot~]$ (by~\refPone and~\refStwo, $\bbP^u[x\in \cS_\infty] = \eta(u) > 0$).
\begin{prop}[$d\geq 2$, \refA] \label{prop:Quantitative-control-volume}
  For $\alpha \in (0,1)$, there exist positive constants $\kappa_{\mathrm{d}i}(\alpha)$, $i=1,2$, such that for $R\geq 1$, 
  \begin{equation}\label{eq:QCV}
    \bbP_0^u \left[ \frac{|\cS_\infty \cap B(0,R)|}{|B(0,R)|} \in [(1-\alpha)\eta(u), (1+\alpha)\eta(u)]^c\right]\leq \kappa_{\mathrm{d}1}(\alpha) e^{-\kappa_{\mathrm{d}2}(\alpha) (\log R)^{1+\DeltaS}}
  \end{equation}
  (with $\DeltaS$ defined in~\eqref{eq:funcS}). Moreover, the function $\alpha \mapsto \kappa_{\mathrm{d}2}(\alpha)$ is increasing on $(0,1)$.
\end{prop}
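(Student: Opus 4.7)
My plan is to follow the approach of~\cite{sapozhnikov2017long-range}, combining coarse-graining with the local uniqueness assumption~\refSone and the decoupling inequality~\refD. First, since $\eta(u)>0$ by~\refStwo, one has $\bbP_0^u[\,\cdot\,]\leq \eta(u)^{-1}\bbP^u[\,\cdot\,]$, so it suffices to prove the analogous bound for the unconditional measure $\bbP^u$.

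Next, I would fix an intermediate scale $r=r(R)$ with $\log r \asymp \log R$ (for instance $r=\lceil R^{1/2}\rceil$), and tile $B(0,R)$ by disjoint boxes $B_i$ of side length $r$ centered at points $x_i$, of which there are $O((R/r)^d)$. For each $i$ I would introduce the local good event $\cG_i$, depending only on the restriction of $\omega$ to $B(x_i,2r)$, asking that (i) there exists a unique cluster $\cC_i^{\ast}\subseteq \cS\cap B(x_i,2r)$ of $\ell^1$-diameter at least $r/10$, and (ii) every cluster in $\cS\cap B(x_i,2r)$ of $\ell^1$-diameter $\geq r/5$ is contained in $\cC_i^{\ast}$. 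On $\cG_i$ the identity $\cS_\infty\cap B_i=\cC_i^{\ast}\cap B_i$ holds, since every point of $\cS_\infty$ in $B_i$ lies in a local cluster of $\ell^1$-diameter $\geq r/5$ via its connection to infinity through $\partial B(x_i,2r)$. Assumption~\refSone then gives $\bbP^u[\cG_i^c]\leq C e^{-c(\log r)^{1+\DeltaS}}$, and a union bound over the $O(R^d/r^d)$ boxes absorbs the polynomial prefactor, bounding the probability of $(\bigcap_i \cG_i)^c$ by a term of the required form.

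On the event $\bigcap_i\cG_i$, $|\cS_\infty\cap B(0,R)|$ coincides up to negligible boundary terms with $\sum_i W_i$, where $W_i:=|\cC_i^{\ast}\cap B_i|$ is a local functional of $\omega|_{B(x_i,2r)}$. Stationarity under~\refPone combined with~\refSone yields $\bbE^u[W_i]=\eta(u)|B_i|(1+o(1))$ as $r\to\infty$. It then remains to establish concentration of $\sum_i W_i$ around $\eta(u)|B(0,R)|$ with error probability of order $e^{-c(\log R)^{1+\DeltaS}}$. For this I would exploit that the $W_i$ depend on pairwise disjoint domains $B(x_i,2r)$ whenever $|x_i-x_j|_\infty\geq 5r$; the decoupling inequality~\refD (applied with a small sprinkling $u\to u'$, using~\refPtwo to switch between levels) then provides the required approximate independence. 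A dyadic renormalization, or a direct Bernstein-type moment argument on the ``bad box'' indicators, gives concentration at a rate much faster than the stretched-exponential $e^{-c(\log r)^{1+\DeltaS}}$ coming from the local uniqueness step, which therefore dominates the final bound.

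The principal obstacle I anticipate is in this last concentration step, because $W_i$ is not monotone in $\omega$: adding open sites can merge clusters and thereby change the identity of the ``macroscopic'' cluster $\cC_i^{\ast}$, so~\refD cannot be applied directly to $W_i$. My plan to circumvent this is to rewrite $\{x\in \cC_i^{\ast}\}$ on $\cG_i$ as the intersection of the increasing event $\{x\in \cS\}$ with a connectivity event to a sufficiently thick annular shell inside $B(x_i,2r)$, the latter being increasing in $\omega$ on $\cG_i$; the decoupling can then be applied separately to the increasing and decreasing components, with the cost of the sprinkling absorbed into $\kappa_{\mathrm{d}1}(\alpha)$ and $\kappa_{\mathrm{d}2}(\alpha)$. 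Finally, the monotonicity $\alpha\mapsto \kappa_{\mathrm{d}2}(\alpha)$ is immediate: the event on the left-hand side of~\eqref{eq:QCV} shrinks as $\alpha$ increases, so the exponent may be chosen non-decreasing in $\alpha$.
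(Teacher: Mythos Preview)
Your single-scale scheme has a real gap in the concentration step. Under~\refD the decoupling is only available for \emph{pairs} of monotone events and costs a sprinkling $u\to u'$ each time it is invoked. A Bernstein-type bound on $\sum_i W_i$ over the $(R/r)^d$ boxes would require iterating~\refD of the order of $(R/r)^d$ (or at least $\log((R/r)^d)$) times, and the accumulated sprinkling must stay inside $(a,b)$; the error term in~\refD then blows up because $(u'-u)^\beta$ becomes too small to be compensated by the separation $s^\gamma$ at a single fixed scale $r$. Your proposed fix of splitting $\{x\in\cC_i^*\}$ into monotone pieces does not address this: the obstruction is not (only) the lack of monotonicity of $W_i$, but the number of times decoupling must be applied. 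The alternative you mention in passing, ``dyadic renormalization'', is not a mere technicality here but the actual mechanism that makes the argument close: one needs only $O(\log\log R)$ scales, sprinkles a summable amount at each, and obtains a recursion $p_n\le C p_{n-1}^2+\text{error}$ for the probability of being bad at scale~$n$, yielding $p_n\le 2\cdot 2^{-2^n}$.

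This is exactly what the paper does, following~\cite{sapozhnikov2017long-range} and~\cite{alves2019}: it sets up seed events $\overline D^\alpha_{x,L_0}$ (lower volume bound and local connectivity) and $\overline I^\alpha_{x,L_0}$ (upper volume bound) at a base scale $L_0$, defines $n$-bad vertices recursively, and shows that if all vertices in $\bbG_s\cap Q_{K,s}$ are $s$-good then the two-sided volume bound holds deterministically (Lemma~\ref{lem:vol}). The probabilistic input is the bound $\bbP^u[0\text{ is $n$-bad}]\le 2\cdot 2^{-2^n}$ from the renormalization, after which one chooses $s$ so that $L_s^d\approx R$. A secondary point: your identity $\cS_\infty\cap B_i=\cC_i^*\cap B_i$ is only one inclusion on $\cG_i$; to get $\cC_i^*\subseteq\cS_\infty$ you also need neighboring local clusters to connect, which your definition of $\cG_i$ does not guarantee. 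This is fixable (and is built into the paper's $\overline D^\alpha$), but it shows that even the deterministic part of your sketch is incomplete.
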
  

The proof of the proposition is a rather straightforward modification of the proof of Lemma 3.3 in \cite{sapozhnikov2017long-range}; the necessary modifications are given in Appendix \ref{sec:proof-of-QCV} for the convenience of the reader. \medskip

We now define for a given integer $R \geq 1$ the radius of a sup-norm ball on which the behavior of the relative volume density of the graph $\mathcal{S}_\infty$ and the heat kernel are uniformly controlled for all sub-balls at scale $R$ (when $R$ is large enough).
For each positive integer $R\geq 1$ and $\alpha \in (0,1)$, we introduce 
\begin{equation}\label{eq:r_N}
  r_{\alpha, R} = \exp(\kappa_{\text{reg}}(\alpha)(\log R)^{1+\DeltaS}),\quad \mbox{with }\kappa_{\text{reg}}(\alpha)=\frac{1}{2d}(\kappa_{\text{d}2}(\alpha)\wedge c_{\text{hk}6}).
\end{equation}
Here, the constants $\kappa_{\text{d}2}(\alpha)$ and $c_{\text{hk}6}$ are from \eqref{eq:QCV} and \eqref{eq:HK-time}, respectively, and in particular $\alpha \mapsto \kappa_{\mathrm{reg}}(\alpha)$ is increasing on $(0,1)$. 
The scale $r_{\alpha, R}$ will be used to characterize a (very) large sup-norm ball centered at the origin, over which certain uniform controls (see Propositions \ref{prop:uni-den} and \ref{prop:uni-HK} below) can be obtained. In the proof of our main result, Theorem \ref{thm:solidification}, we will actually require $\alpha$ to tend to zero slowly with $N$, i.e.~by considering a sequence $(\alpha_N)_{N \geq 0}$ with $\alpha_N \searrow 0$, which allows the inspection of many properly separated scales, see \eqref{eq:alpha_N}. \medskip

We turn to the main results of this section. The following proposition  states that for any small approximation error $\alpha \in (0,1)$, and $\mathbb{P}^u$-almost every realization of $\omega$, there exists a minimal scale $R_{\mathrm{den}}(\omega,\alpha)$ such that for all $R$ above the scale, the relative volume densities of the cluster $\mathcal{S}_\infty$ measured within boxes of radius $R$ are deviate from $\eta(u)$ by at most $\alpha$, uniformly over $B(0,r_{\alpha, R})$. 
\begin{proposition}[$d\geq 2$, \refA]\label{prop:uni-den}
  For $0<\alpha<1$ and $\omega\in \Omega$, consider 
  \begin{equation}\label{eq:minimal-scale-ball}
    \Rden(\omega, \alpha) \stackrel{\mathrm{def}}{=} \inf
    \left\{R_0 \geq 1\,:\,
    \begin{aligned}
      &\mbox{for all $R\geq R_0$ and $x\in \cS_\infty \cap B(0,r_{\alpha, R})$,}\\
      & (1-\alpha)\eta(u) \leq \frac{|\cS_\infty \cap B(x,R)|}{|B(x,R)|} \leq (1+\alpha)\eta(u)
    \end{aligned}
    \right\}
  \end{equation}
  (with $\inf \varnothing =\infty$). One has, for every $n \in \mathbb{Z}_+$, the upper bound  
  \begin{equation}
  \label{eq:R-den-upper-bound}
    \bbP^u [\Rden(\,\cdot\,, \alpha) > n] \leq C(\alpha)e^{-c(\alpha) (\log n)^{1+\DeltaS}}.
  \end{equation}
  In particular, $\Rden(\,\cdot\,,\alpha)$ is a $\bbP^u$-a.s.~finite random variable, and for all $R\geq \Rden(\omega, \alpha)$, one has
  \begin{equation}\label{eq:volume-concentration}
    (1-\alpha) \eta(u) \leq \frac{|\cS_\infty \cap B(x,R)|}{|B(x,R)|} \leq (1+\alpha) \eta(u),
  \end{equation}
  for all $x\in \cS_\infty\cap B(0,r_{\alpha, R})$.
\end{proposition}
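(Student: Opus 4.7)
The strategy is a standard scale-by-scale union bound, using Proposition~\ref{prop:Quantitative-control-volume} as the one-scale input. The key observation is that $R_{\mathrm{den}}(\omega,\alpha) \geq n$ is equivalent to the failure of the defining condition at $R_0 = n-1$ (since the condition becomes more restrictive for smaller $R_0$), which yields the inclusion
\begin{equation*}
\{R_{\mathrm{den}}(\cdot,\alpha) \geq n\} \subseteq \bigcup_{R \geq n-1}\; \bigcup_{x \in B(0, r_{\alpha, R})} \big(\{x \in \cS_\infty\} \cap A_{x,R}\big),
\end{equation*}
where $A_{x,R} = \{|\cS_\infty \cap B(x,R)|/|B(x,R)| \notin [(1-\alpha)\eta(u),(1+\alpha)\eta(u)]\}$. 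It then suffices to control the double sum obtained after applying $\bbP^u$ to the right-hand side.

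For each individual term I would proceed as follows. By the translation invariance in \refPone, one has $\bbP^u[\{x \in \cS_\infty\} \cap A_{x,R}] = \eta(u)\,\bbP_0^u[A_{0,R}]$, and Proposition~\ref{prop:Quantitative-control-volume} bounds the latter by $\kappa_{\mathrm{d}1}(\alpha)\exp\big(-\kappa_{\mathrm{d}2}(\alpha)(\log R)^{1+\DeltaS}\big)$. The number of admissible $x$ is at most $|B(0,r_{\alpha,R})| \leq 3^d r_{\alpha,R}^d = 3^d \exp\big(d\kappa_{\mathrm{reg}}(\alpha)(\log R)^{1+\DeltaS}\big)$. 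By the choice in \eqref{eq:r_N}, $d\kappa_{\mathrm{reg}}(\alpha) = \tfrac{1}{2}(\kappa_{\mathrm{d}2}(\alpha)\wedge c_{\text{hk}6}) \leq \tfrac{1}{2}\kappa_{\mathrm{d}2}(\alpha)$, so the inner union over $x$ contributes at most
\begin{equation*}
C(\alpha)\exp\big(-\tfrac{1}{2}\kappa_{\mathrm{d}2}(\alpha)(\log R)^{1+\DeltaS}\big).
\end{equation*}

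It remains to sum this over $R \geq n-1$. Since $\DeltaS > 0$, one has $(\log R)^{1+\DeltaS} \gg \log R$, so a routine tail estimate (e.g., splitting at $R = n^2$ and using that the exponent eventually dominates $3\log R$) yields
\begin{equation*}
\sum_{R \geq n-1} \exp\big(-\tfrac{1}{2}\kappa_{\mathrm{d}2}(\alpha)(\log R)^{1+\DeltaS}\big) \leq C(\alpha)\exp\big(-c(\alpha)(\log n)^{1+\DeltaS}\big)
\end{equation*}
for all $n \geq 1$, after enlarging $C(\alpha)$ to absorb the trivial small-$n$ cases. This is \eqref{eq:R-den-upper-bound}. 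The $\bbP^u$-a.s.~finiteness of $R_{\mathrm{den}}(\cdot,\alpha)$ then follows by evaluating the bound along $n = 2^k$ and applying Borel--Cantelli, and \eqref{eq:volume-concentration} is immediate from the definition \eqref{eq:minimal-scale-ball}.

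The only real bookkeeping point, and the main obstacle to making the argument tight, is to verify that the combinatorial factor $r_{\alpha,R}^d$ from the union bound over $x \in B(0,r_{\alpha,R})$ is absorbed by the stretched-exponential decay from Proposition~\ref{prop:Quantitative-control-volume} with room to spare for the outer sum over $R$. This is precisely what the factor $\tfrac{1}{2d}$ in the definition of $\kappa_{\mathrm{reg}}(\alpha)$ in \eqref{eq:r_N} is engineered to guarantee, preserving the rate $(\log n)^{1+\DeltaS}$ in the final exponent.
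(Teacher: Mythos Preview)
Your proof is correct and follows essentially the same approach as the paper: both decompose $\{\Rden \geq n\}$ as a double union over scales $R$ and centers $x \in B(0,r_{\alpha,R})$, apply a union bound together with Proposition~\ref{prop:Quantitative-control-volume}, and use the definition of $\kappa_{\mathrm{reg}}(\alpha)$ in~\eqref{eq:r_N} to absorb the combinatorial factor. You are somewhat more explicit than the paper about the arithmetic of the exponents and the tail sum over $R$; also note that the a.s.~finiteness follows directly from $\bbP^u[\Rden \geq n]\to 0$ without invoking Borel--Cantelli.
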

\begin{proof}
  For $n\geq 1$, 
  \begin{align}
    \{\Rden > n\} &= \left\{
    \begin{aligned}
      &\mbox{there exists $R_0\geq n$ and $x\in \cS_\infty \cap B(0,r_{\alpha, R_0})$ such that}\\
      &\frac{|\cS_\infty \cap B(x,R_0)|}{|B(x,R_0)|} \in [(1-\alpha)\eta(u), (1+\alpha)\eta(u)]^c
    \end{aligned}\right\}  \label{eq:N_den-2}
    \\
    &= \bigcup_{R_0=n}^\infty \bigcup_{x\in B(0,r_{\alpha, R_0})} \left\{ \frac{|\cS_\infty \cap B(x,R_0)|}{|B(x,R_0)|} \in [(1-\alpha)\eta(u), (1+\alpha)\eta(u)]^c , x\in \cS_\infty\right\},\notag
  \end{align}
  and by applying a union bound and using the definition of $r_{\alpha, R}$ in \eqref{eq:r_N}, we have 
  \begin{align*}
    \bbP^u [\Rden > n] 
    & \leq \sum_{R_0=n}^\infty \sum_{x\in\cap B(0,r_{\alpha, R_0})} \bbP^u \left[ \frac{|\cS_\infty \cap B(x,R_0)|}{|B(x,R_0)|} \in [(1-\alpha)\eta(u), (1+\alpha)\eta(u)]^c,x\in \cS_\infty\right]\\
    & \stackrel{\eqref{eq:QCV}, \eqref{eq:r_N}}{\leq} C(\alpha)\eta(u) e^{-c(\alpha) (\log n)^{1+\DeltaS}}, 
  \end{align*}
  proving~\eqref{eq:R-den-upper-bound}.
  The claim~\eqref{eq:volume-concentration} follows directly from the definition of $R_{\mathrm{den}}(\omega,\alpha)$.
\end{proof}
For $\alpha\in (0,1)$, we denote 
\begin{equation}\label{eq:Omega_den}
  \Omega_{\mathrm{den}}^\alpha \stackrel{\mathrm{def}}{=} \{\omega\in \Omega \,:\, \Rden(\omega, \alpha)<\infty\} \stackrel{\eqref{eq:N_den-2}}{\in} \cF.
\end{equation}
It is clear from Proposition \ref{prop:uni-den} that $\Omega_{\mathrm{den}}^\alpha$ has full $\bbP^u$-measure, for every $\alpha \in (0,1)$.

\medskip

We now derive an analogous statement concerning uniform controls on the heat kernel of the random walk on $\mathcal{S}_\infty$.

\begin{lemma}[$d\geq 2$, \refA]\label{lem:HK}
  Let $0 < \alpha < 1$ and $\omega\in \Omega$. The random variable defined by
  \begin{equation}\label{eq:minimal-scale-HK}
    \Rhk(\omega, \alpha) \stackrel{\mathrm{def}}{=} \inf
    \left\{
    \begin{array}{l}
      R_0 \geq 1\,:\,\mbox{for all $R\geq R_0$, $q_t^\omega(x,y)$ satisfies \eqref{eq:HK}}\\
      \mbox{for all $x\in \cS_\infty \cap B(0,2r_{\alpha, R})$, $y\in \cS_\infty$, and $t\geq R \vee |x-y|^{3/2}_1$}
    \end{array}
    \right\}
  \end{equation}
  (with $\inf \varnothing =\infty$) satisfies that 
  \begin{equation}
    \bbP^u \left[\Rhk(\,\cdot\,, \alpha) > n \right] \leq C(\alpha) e^{-c(\alpha) (\log n)^{1+\DeltaS}}.
  \end{equation}
  In particular, $\Rhk(\,\cdot\,, \alpha)$ is a $\bbP^u$-a.s.~finite random variable.
\end{lemma}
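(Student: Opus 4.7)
The plan is to follow the template of Proposition \ref{prop:uni-den}, replacing the volume regularity input \eqref{eq:QCV} by the tail bound \eqref{eq:HK-time} on the heat-kernel regularity times $\cT(x, \cdot)$. The first step is to translate the failure of \eqref{eq:HK} at some scale $R_0$ into a deterministic statement about $\cT(x, \omega)$. Since we have fixed $\varepsilon = 1/2$, the bound \eqref{eq:HK} holds for all $t \geq \cT(x, \omega) \vee |x-y|_1^{3/2}$; consequently, if \eqref{eq:HK} fails at some pair $(t,y)$ with $t \geq R_0 \vee |x-y|_1^{3/2}$, then necessarily $\cT(x, \omega) > R_0$. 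This yields the deterministic inclusion
\begin{equation}
\{\Rhk(\,\cdot\,, \alpha) \geq n\} \subseteq \bigcup_{R_0 = n}^\infty \bigcup_{x \in B(0,\, 2 r_{\alpha, R_0})} \big\{x \in \cS_\infty,\ \cT(x, \cdot) > R_0 \big\}.
\end{equation}

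A union bound combined with \eqref{eq:HK-time} and the trivial estimate $|B(0, 2r_{\alpha, R_0})| \leq C r_{\alpha, R_0}^d$ then gives
\begin{equation}
\bbP^u[\Rhk(\,\cdot\,, \alpha) \geq n] \leq C \sum_{R_0 = n}^\infty r_{\alpha, R_0}^d\, e^{-c_{\mathrm{hk}6}(\log R_0)^{1+\DeltaS}}.
\end{equation}
At this point the specific choice of the prefactor $\tfrac{1}{2d}$ in $\kappa_{\mathrm{reg}}(\alpha)$ appearing in \eqref{eq:r_N} becomes essential: unwinding the definition gives $r_{\alpha, R_0}^d \leq \exp\!\bigl(\tfrac{1}{2} c_{\mathrm{hk}6}(\log R_0)^{1+\DeltaS}\bigr)$, so the combinatorial factor counting lattice sites in $B(0, 2r_{\alpha, R_0})$ is absorbed by half of the exponential tail, leaving $e^{-\frac{1}{2}c_{\mathrm{hk}6}(\log R_0)^{1+\DeltaS}}$ inside the sum.

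To extract the claimed $n$-dependence, I would then split the exponent, using that for $R_0 \geq n$ one has $(\log R_0)^{1+\DeltaS} \geq \tfrac{1}{2}(\log n)^{1+\DeltaS} + \tfrac{1}{2}(\log R_0)^{1+\DeltaS}$; the prefactor $e^{-\frac{1}{4}c_{\mathrm{hk}6}(\log n)^{1+\DeltaS}}$ pops out and the residual series in $R_0$ converges since $\DeltaS > 0$. This produces the desired estimate $\bbP^u[\Rhk(\,\cdot\,, \alpha) \geq n] \leq C(\alpha) e^{-c(\alpha)(\log n)^{1+\DeltaS}}$, and $\bbP^u$-a.s.~finiteness follows from Borel--Cantelli.

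The argument poses no genuine obstacle, being structurally identical to Proposition \ref{prop:uni-den}. The only delicate point, which in retrospect motivates the particular form of the scale $r_{\alpha, R}$ in \eqref{eq:r_N}, is precisely the balancing of the volume of the large ``observation window'' $B(0, 2r_{\alpha, R_0})$ against the super-polynomial heat-kernel decay provided by \eqref{eq:HK-time}; taking $\kappa_{\mathrm{reg}}(\alpha) \leq \tfrac{1}{2d} c_{\mathrm{hk}6}$ is exactly what makes the union bound summable.
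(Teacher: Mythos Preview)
Your proposal is correct and follows essentially the same route as the paper's proof: both reduce the event $\{\Rhk(\cdot,\alpha)\geq n\}$ to the union over $R_0\geq n$ and $x\in B(0,2r_{\alpha,R_0})$ of $\{\cT(x)\geq R_0, x\in\cS_\infty\}$, then apply a union bound with~\eqref{eq:HK-time} and the definition~\eqref{eq:r_N} of $r_{\alpha,R}$. You spell out in more detail how the choice $\kappa_{\mathrm{reg}}(\alpha)\leq \tfrac{1}{2d}c_{\mathrm{hk}6}$ ensures that the volume factor is absorbed, which the paper leaves implicit.
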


\begin{proof} Recall the definition of the random variables $\cT(\cdot)$ above~\eqref{eq:HK}, and the convention following~\eqref{eq:HK-time}.
  For $n\geq 1$ 
  \begin{align}
      \{\Rhk&(\,\cdot\,, \alpha) > n\} \notag\\
      &=\left\{
    \begin{array}{l}
      \mbox{there exists $R_0 \geq n$ such that $q_{t_0}^\omega(x_0,y_0)$ fails to satisfy \eqref{eq:HK}}\\
    \mbox{for some $x_0\in \cS_\infty \cap B(0,2r_{\alpha, R_0})$, $y_0\in \cS_\infty$ and $t_0 \geq R_0\vee |x_0-y_0|_1^{3/2}$}
    \end{array}
    \right\} \label{eq:N_hk-2}
    \\
    & \subseteq \{ \mbox{there exists $R_0 \geq n$ such that $R_0 \leq \max_{x\in \cS_\infty \cap B(0,2r_{\alpha, R_0})} \cT(x)$}\} \notag \\
    & = \bigcup_{R_0=n}^\infty \bigcup_{x\in B(0,2r_{\alpha, R_0})} \{\cT(x) \geq R_0, x\in \cS_\infty\}. \notag
  \end{align}
  Then we can apply a union bound to obtain
  \begin{equation}
  \begin{split}
  \bbP^u[\Rhk(\,\cdot\,, \alpha) > n] & \leq \sum_{R_0=n}^\infty \sum_{x\in B(0,2r_{\alpha,R_0})} \bbP^u [\cT(x) \geq R_0,x\in \cS_\infty] \\
  & \stackrel{\eqref{eq:HK-time}, \eqref{eq:r_N}}{\leq} C(\alpha)e^{-c(\alpha)(\log n)^{1+\DeltaS}},
  \end{split}
  \end{equation}
  which concludes the proof. 
\end{proof}

The first part of following proposition follows from the previous lemma by the definition of $\Rhk$ in \eqref{eq:minimal-scale-HK}. The second part is a consequence of the first part via a standard argument; see \cite[Proposition~5.26]{barlow2017heatkernel} or \cite[Lemma~5.8]{barlow2004RWpercolation}.

\begin{proposition}[$d\geq 2$, \refA]\label{prop:uni-HK} Let $\alpha\in (0,1)$.
  \begin{enumerate}[label=(\roman*)]
    \item For $\bbP^u$-almost every $\omega$, $\Rhk(\omega,\alpha)$ defined in \eqref{eq:minimal-scale-HK} is finite, and for all $R\geq \Rhk(\omega, \alpha)$, the heat kernel satisfies ($c_{\text{hk}i}$ for $i=1,\ldots,4$ defined at the end of Section \ref{subsec:random-walks})
  \begin{equation}\label{eq:HK2}
    c_{\text{hk}1} t^{-d/2} \exp\left(-c_{\text{hk}2} \frac{|x-y|_1^2}{t}\right) \leq q_t^\omega(x,y) \leq c_{\text{hk}3} t^{-d/2} \exp\left(-c_{\text{hk}4} \frac{|x-y|_1^2}{t}\right),
  \end{equation} 
  for any $x\in \cS_\infty \cap B(0,2r_{\alpha, R})$, $y\in \cS_\infty$, and $t\geq R \vee |x-y|^{3/2}_1$.
  \item {For any $\vartheta\in (0,1)$, there exist constants $c_{\text{khk}1}(\vartheta)>0$, $0<c_{\text{khk}2}(\vartheta)<1$ and a random variable $\Rkhk(\cdot,\alpha, \vartheta)$ such that for $\bbP^u$-almost  every $\omega \in \Omega$, $ \Rkhk(\omega, \alpha, \vartheta) < \infty$}, and for all integers $R\geq \Rkhk(\omega, \alpha, \vartheta)$, the heat kernel $q_{t,B(x_0,R) \, \cap \, \mathcal{S}_\infty}^\omega(\cdot,\cdot)$  of the random walk killed upon exiting $B(x_0,R) \cap \mathcal{S}_\infty$ (recall~\eqref{eq:KilledHeatKernel}) satisfies
  \begin{equation}\label{eq:killed-heat-kernel}
  q_{t,B(x_0,R)\, \cap \,\mathcal{S}_\infty}^\omega(x,y) \geq c_{\text{khk}1}(\vartheta) t^{-d/2},
  \end{equation}
  where $x_0\in \cS_\infty\cap B(0,r_{\alpha, R})$, $x,y\in \cS_\infty\cap B(x_0,(1-\vartheta)R)$ and $c_{\text{khk}2}(\vartheta)R^2 \leq t \leq R^2$.
  \end{enumerate}
\end{proposition}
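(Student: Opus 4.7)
The plan is to reduce both parts to Lemma~\ref{lem:HK} and the Gaussian two-sided bounds it provides, following classical techniques.

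For part (i), I would simply unwind definitions: by the very construction of $\Rhk(\omega,\alpha)$ in \eqref{eq:minimal-scale-HK}, for every $R\geq \Rhk(\omega,\alpha)$ the two-sided bound \eqref{eq:HK2} holds throughout the prescribed range of $(x,y,t)$, while Lemma~\ref{lem:HK} already yields $\Rhk(\omega,\alpha)<\infty$ for $\bbP^u$-a.e.~$\omega$. Hence (i) is essentially tautological.

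For part (ii), I would follow the classical derivation of near-diagonal lower bounds on the killed heat kernel from two-sided Gaussian bounds on the free heat kernel, as in Proposition~5.26 of~\cite{barlow2017heatkernel} or Lemma~5.8 of~\cite{barlow2004RWpercolation}. The starting point is the strong Markov decomposition at $T_U$, with $U=B(x_0,R)\cap\cS_\infty$,
\[
q_{t,U}^\omega(x,y) \;=\; q_t^\omega(x,y) \;-\; E_x^\omega\!\left[q_{t-T_U}^\omega(X_{T_U},y)\,\IND_{\{T_U<t\}}\right].
\]
I would first lower-bound the leading term using part~(i): since $x,y\in B(x_0,(1-\vartheta)R)$ forces $|x-y|_1\leq 2dR$ and $t\in[c_{\text{khk}2}(\vartheta)R^2,R^2]$, this yields $q_t^\omega(x,y)\geq c(\vartheta)\, t^{-d/2}$ as soon as $R\geq \Rhk(\omega,\alpha)$. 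The hypothesis $x_0\in B(0,r_{\alpha,R})$, together with $R\leq r_{\alpha,R}$ for $R$ large (recall~\eqref{eq:r_N}), keeps all base points of the Markov argument within $B(0,2r_{\alpha,R})$, which is precisely where part~(i) applies. Next, I would show that the correction term is at most half of the leading term: on $\{T_U<t\}$, $X_{T_U}$ lies at $\ell^\infty$-distance at least $\vartheta R$ from $y$, and applying the Gaussian upper bound of part~(i) to $q_{t-T_U}^\omega(X_{T_U},y)$ yields an exponential penalty of order $\exp(-c\vartheta^2 R^2/(t-T_U))$. After integration over $T_U$, this can be converted into a bound of the form $C\exp(-c'\vartheta^2/c_{\text{khk}2}(\vartheta))\, t^{-d/2}$. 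Choosing $c_{\text{khk}2}(\vartheta)$ small enough (depending on $\vartheta$) then makes the correction term negligible compared to the leading term, and setting $\Rkhk(\omega,\alpha,\vartheta)$ as the maximum of $\Rhk(\omega,\alpha)$ and the various scale thresholds produced in the argument yields~\eqref{eq:killed-heat-kernel}.

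The main obstacle I anticipate is the regime where $t-T_U$ is small, where the Gaussian upper bound of part~(i) cannot be applied directly. The standard workaround, which I would use, is to insert an additional Chapman--Kolmogorov step at an intermediate time before invoking~\eqref{eq:HK2}, forcing all time arguments back into the regime of validity of part~(i). This introduces some bookkeeping on the various scales involved but does not affect the structural flow of the argument.
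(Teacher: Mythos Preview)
Your proposal is correct and matches the paper's approach: part~(i) is indeed immediate from Lemma~\ref{lem:HK} and the definition of $\Rhk$, and for part~(ii) the paper simply defers to \cite[Proposition~5.26]{barlow2017heatkernel} and \cite[Lemma~5.8]{barlow2004RWpercolation}, which is precisely the strong-Markov/Gaussian-bound argument you outline, including the Chapman--Kolmogorov correction for small $t-T_U$.
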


For $\alpha,\vartheta\in (0,1)$, let
\begin{equation}
  \Omega_{\text{hk}}^{\alpha, \vartheta} \stackrel{\mathrm{def}}{=} \{\omega\in \Omega\,:\, \Rhk(\omega, \alpha)<\infty, \Rkhk(\omega, \alpha, \vartheta)<\infty\} \in \cF,
\end{equation}
and, with $\Omega_0$ and $\Omega_{\mathrm{den}}^\alpha$ as in \eqref{eq:Omega-0} and \eqref{eq:Omega_den} respectively, 
\begin{equation}\label{eq:Omega_reg}
  \Omega_{\mathrm{reg}}^{\alpha, \vartheta} \stackrel{\mathrm{def}}{=} \Omega_0 \cap \Omega_{\mathrm{den}}^\alpha \cap \Omega_{\text{hk}}^{\alpha, \vartheta}.
\end{equation}
It is clear that $\Omega_{\mathrm{reg}}^{\alpha, \vartheta}\in \cF$ and $\bbP^u\bigl[\Omega_{\mathrm{reg}}^{\alpha, \vartheta}\bigr]=1$.

\medskip

Propositions~\ref{prop:uni-den} and \ref{prop:uni-HK} show that for $\omega\in \Omega_{\mathrm{reg}}^{\alpha, \vartheta}$ the relative volume density of $\mathcal{S}_\infty$ and heat kernels exhibit well-controlled behavior at any scale $R\geq\widetilde{R}=\Rden\vee \Rhk\vee \Rkhk$, uniformly over the ball $B(0,r_{\alpha, R})$. This is similar in spirit to the notion of ``good'' and ``very good'' boxes introduced in~\cite[Definition~1.7]{barlow2004RWpercolation} to control degeneracies of the supercritical cluster of Bernoulli (bond) percolation, and developed in~\cite{sapozhnikov2017long-range} in the set-up of assumptions \refPone--\refPthree and \refSone--\refStwo (see also \cite[Definition~4.1]{sapozhnikov2017long-range} for the notion of ``regular'' and ``very regular'' boxes). For the application we have in mind, it is expedient to directly use the controls on the heat kernel of~\cite{sapozhnikov2017long-range} and to require quite precise volume concentration. \medskip

We will typically use $\Omega_{\mathrm{reg}}^{\alpha,\vartheta}$ to argue as follows. When the ``lowest accessible scale'' satisfies $R_{\min}\geq \widetilde{R}$, we obtain uniform estimates over the ball $B(0,r_{\alpha,R_{\min}})$. We choose $\alpha$ to depend on $N$ so that $r_{\alpha_N, R_{\min}}$ is comparable to or larger than the macroscopic scale.
In other words, we can examine the macroscopic sup-norm ball $B(0,r_{\alpha_N,R_{\min}})$ at any location down to the microscopic scale $R_{\min}$, while retaining good control over the underlying graph structure (see Fig.~\ref{fig:bad-box} for an 
illustration). Remarkably, the superpolynomial separation between the microscopic and macroscopic scales is sufficient to implement the multi-scale analysis in the proof of the solidification estimates.

\begin{figure}[htbp]
\begin{center}
\includegraphics[scale=0.8]{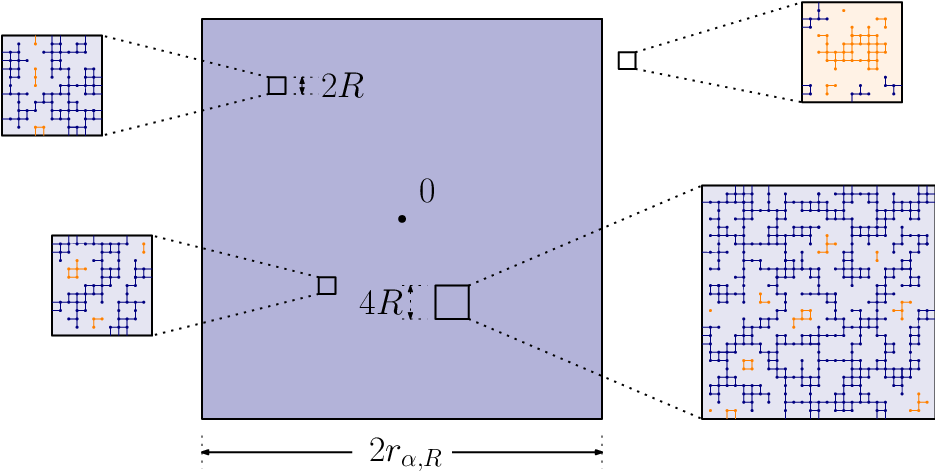}
\end{center}
\caption{Schematic illustration of the role of $r_{\alpha,R}$, for $R \geq R_{\min}$. All boxes with center inside $B(0,r_{\alpha,R}) \cap \mathcal{S}_\infty$ and radius greater or equal to $R$ are well-behaved, but degeneracies (depicted here for the relative volume) occur outside of $B(0,r_{\alpha,R})$.}
\label{fig:bad-box}
\end{figure}

\section{Solidification of porous interfaces on percolation clusters}\label{sec:solidification}

In this section, we state and prove in Theorem~\ref{thm:solidification} the solidification estimates of porous interfaces on typical realizations of supercritical site percolation clusters under our standing~\refA. We begin by stating our main result in a precise way, after providing some additional notation, and then present the proof. The latter consists of three steps, which are similar to \cite{nitzschner2017solidification} in the context of Brownian motion and to \cite{CN2021disconnection} in the context of random walks on $(\bbZ^d,\mathbb{E}^d)$ equipped with uniformly elliptic edge weights. In the first step, we study how a random walk on the supercritical cluster enters sets of well-balanced local density, relative to a given set $U_0 \subseteq \mathcal{S}_\infty$ (more precisely, the density will be defined in terms of the complement $U_1 = \mathcal{S}_\infty \setminus U_0$). Secondly, we introduce a certain ``resonance set'' which is naturally attached to $U_0$, and show that it is atypical for a random walk to avoid this set, in an asymptotic sense. The precise formulation appears in Theorem~\ref{thm:asy-Phi}. Lastly, we essentially use the resonance set as a substitute for a Wiener-type criterion to show that, again in an asymptotic sense, the porous interfaces cannot be avoided by a random walk. \medskip

Compared with the uniformly elliptic case, the presence of local degeneracies introduces substantial difficulties in implementing this strategy. 
For instance, in the uniformly elliptic setting the local density of $U_1$ (crucial to the estimate of hitting probabilities of the resonance set) is deterministic and enjoys good regularity properties similar to the continuous case, while in our case, it is intrinsically dependent on the percolation configuration $\omega \in \{0,1\}^{\bbZ^d}$. 
This is addressed by only considering scales above a certain threshold, utilizing the large-scale regularity properties of the infinite cluster $\cS_\infty$ from Section~\ref{sec:large-scale}.
Our analysis in this section shows that, for almost every $\omega$, there exists a sufficient ``number of scales'' to that can be used to show a decay of the probability to avoid the resonance set. Here, the smallest scale corresponds to the choice of $R_{\min}$ as in the discussion below~\eqref{eq:Omega_reg}, which must be sufficiently ``large'' to guarantee the aforementioned good regularity properties, and the maximal, ``very large'' scale exceeds $r_{\alpha,R_{\min}}$. 

\medskip

Throughout this section, we assume $d\geq 3$, and require the family of probability measures $\{\bbP^u\,:\,u\in (a,b)\}$ on $(\Omega, \cF)$ to satisfy the \refA. We also fix
\begin{equation}\label{eq:ass-bounded}
  \mbox{a compact set $A\subseteq \bbR^d$ with $\mathring{A}\neq\varnothing$.}
\end{equation}
We recall~\eqref{eq:Omega-0} and assume, without loss of generality, throughout this section that $\omega \in \Omega_0$. Consider a bounded, non-empty set $U_0\subseteq \cS_\infty$, its complement $U_1 = \cS_\infty \setminus U_0$ in $\cS_\infty$, and also the boundary $S=\partial_{\cS_\infty} U_0=\partial_{\cS_\infty,\text{in}}U_1$ relative to $\cS_\infty$.
To measure the local density of $U_0$ in balls of dyadic scale, we define for $\omega\in \Omega_0$, a non-negative integer $\ell$, and $x\in \bbZ^d$, the \emph{local density measure} by 
\begin{equation}\label{eq:local-density-measure}
\mu_{x,\ell}^\omega(y) \stackrel{\mathrm{def}}{=} 
\frac{\IND_{B(x,2^\ell)\cap \cS_\infty}(y)}{|B(x,2^\ell)\cap \cS_\infty|}, \qquad \mbox{for $y\in \bbZ^d$}
\end{equation}
(with the convention that $0/0=0$, and we will also use this convention in \eqref{eq:sigma_l^omega}--\eqref{eq:average-ball}). Note that if $x\in \cS_\infty$, $B(x,R)\cap \cS_\infty\neq \varnothing$ for $R\geq 1$.
For $\omega\in \Omega_0$, two \emph{local density functions associated with $U_1$} are defined as (with $U_1$ implicit in the notation)
\begin{align}
\sigma_\ell^{\omega}(x) &\stackrel{\mathrm{def}}{=} \mu_{x,\ell}^\omega(U_1) = \frac{|B(x,2^\ell)\cap U_1|}{|B(x,2^\ell)\cap \cS_\infty|},\quad \mbox{for $x\in \bbZ^d$},\label{eq:sigma_l^omega}\\
\widetilde{\sigma}_\ell^{\omega}(x) &\stackrel{\mathrm{def}}{=} \mu_{x,\ell+2}^\omega(U_1) = \frac{| B(x,4\cdot 2^\ell)\cap U_1|}{|B(x,4\cdot 2^\ell)\cap \cS_\infty|},\quad \mbox{for $x\in \bbZ^d$}\label{eq:sigma_l^omega-2}.
\end{align}
For $\omega\in \Omega_0$ and a given function $f:\bbZ^d \to \bbR$ we write $(f)_{x,\ell}^\omega$ for the average of $f$ on $B(x,2^\ell)\cap \cS_\infty$, namely 
\begin{equation}\label{eq:average-ball}
  (f)_{x,\ell}^\omega \stackrel{\mathrm{def}}{=} \frac{1}{|B(x,2^\ell)\cap \cS_\infty|}\sum_{y\in B(x,2^\ell)\cap \cS_\infty} f(y).
\end{equation}
Given $\omega\in \Omega_0$, and integers $N\geq 1$, $\ell_*\geq 0$, we define (recall~\eqref{eq:a_N-bound} and~\eqref{eq:ass-bounded})
\begin{equation}\label{eq:cU}
  \cU_{\ell_*,N}^\omega \stackrel{\mathrm{def}}{=} \{U_0\subseteq B(0,b_N) \cap \cS_\infty \, : \, \sigma_{\ell}^\omega (x) \leq \tfrac{1}{2}, \mbox{for all $x\in A_N\cap \cS_\infty$, $\ell\leq \ell_*$}\}.
\end{equation}
We furthermore define for fixed $U_0\in\cU_{\ell_*,N}^\omega$, $\epsilon\in \bbN$, and $\chi\in (0,1)$, the class of \emph{porous interfaces associated with $U_0$}
\begin{equation}
\cS_{U_0,\epsilon,\chi}^\omega \stackrel{\mathrm{def}}{=} \{\Sigma\subseteq \cS_\infty \text{ bounded}\, : \, P_x^\omega[H_\Sigma < \tau_\epsilon]\geq \chi, \text{ for all $x\in S$}\}
\end{equation}
(recall the notation $H_\Sigma$ for the entrance time of the random walk into $\Sigma$, and $\tau_\epsilon$ for the first time the random walk moves at sup-distance $\epsilon$ from its starting point, see~\eqref{eq:tau_r}). The parameter $\epsilon$ controls at which distance to $S$ the porous interface is located, while $\chi$ is a measure for how markedly it is felt for a random walk starting from any point in $S$ (see also Figure~\ref{fig:interface} for an illustration of the sets under consideration). \medskip

We now state the uniform controls for the probability that a random walk on $\mathcal{S}_\infty$ is absorbed by porous interfaces. This corresponds to the extension of the \emph{solidification estimates} from~\cite{nitzschner2017solidification} and~\cite{CN2021disconnection} to our case. Recall the growth condition~\eqref{eq:a_N-bound} governing the two sequences $(a_N)_{N \geq 0}$ and $(b_N)_{N \geq 0}$.
\begin{theorem}[$d\geq 3$, \refA, \eqref{eq:ass-bounded}, \eqref{eq:a_N-bound}]\label{thm:solidification}
  Let $\chi\in (0,1)$, and let $(c_N)_{N\geq 0}$ be another sequence of positive real numbers decreasing to $0$.  For $\bbP^u$-almost every $\omega$, it holds that 
  \begin{equation}\label{eq:solid-1}
    \lim_{N\to\infty} \sup_{\epsilon/2^{\ell_*}\leq c_N,a_N\leq 2^{\ell_*}\leq b_N} \sup_{U_0\in \cU_{\ell_*,N}^\omega} \sup_{\Sigma\in \cS_{U_0,\epsilon,\chi}^\omega} \sup_{x\in A_N \cap \cS_\infty} P_x^\omega [H_\Sigma=\infty] =0,
  \end{equation}
  and moreover,
  \begin{equation}\label{eq:solid-2}
    \lim_{N\to\infty}  \sup_{\epsilon/2^{\ell_*}\leq c_N,a_N\leq 2^{\ell_*}\leq b_N} \sup_{U_0\in \cU_{\ell_*,N}^\omega} \sup_{\Sigma\in \cS_{U_0,\epsilon,\chi}^\omega} \sup_{x\in \cS_\infty} (P_x^\omega [H_\Sigma=\infty] - P_x^\omega [H_{A_N}=\infty]) = 0. 
  \end{equation}
\end{theorem}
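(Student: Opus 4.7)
The proof will follow the three-step template of \cite{nitzschner2017solidification, CN2021disconnection} (one-step entrance estimate, resonance set avoidance, absorption by porous interface), but with a crucial adaptation: every geometric estimate has to be restricted to scales where the conclusions of Section~\ref{sec:large-scale} apply. The plan is to introduce a sequence $\alpha_N \searrow 0$ compatible with~\eqref{eq:a_N-bound} so that, on a set of full $\bbP^u$-measure, the ball $B(0,r_{\alpha_N,R_{\min}})$ contains $B(0,b_N)$ as soon as $N$ is large, where $R_{\min}$ is a microscopic threshold depending only on $\omega$, and simultaneously to pick $J_N \nearrow \infty$ slowly enough so that the multi-scale exponent remains effective.

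\emph{Step 1 (one-step entrance estimate).} For $\widetilde\alpha\in(0,1)$ depending only on $d$ and for a scale $\ell$ with $2^\ell \geq R_{\min}$ satisfying $\widetilde\sigma_\ell^\omega(x)\in[\widetilde\alpha,1-\widetilde\alpha]$, I will show that there exists $\rho=\rho(\widetilde\alpha,\vartheta)>0$ such that
\[
P_x^\omega\bigl[H_{U_1}\leq \tau_{C\cdot 2^\ell}\bigr]\geq \rho.
\]
Fixing $t\asymp 2^{2\ell}$ and a concentric sub-ball $B(x,(1-\vartheta)\cdot 4\cdot 2^\ell)$, the killed heat kernel lower bound~\eqref{eq:killed-heat-kernel} together with the volume control~\eqref{eq:volume-concentration} gives
\[
P_x^\omega\bigl[X_t\in U_1\cap B(x,(1-\vartheta)\cdot 4\cdot 2^\ell),\,T_{B(x,4\cdot 2^\ell)\cap \cS_\infty}>t\bigr]\geq c\,\widetilde\sigma_\ell^\omega(x)\geq c\widetilde\alpha,
\]
which implies the claim once the walk has been forced to jump into $U_1$ before exiting a bounded multiple of $B(x,2^\ell)$.

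\emph{Step 2 (resonance set is asymptotically unavoidable).} Let $\mathcal A_\ast\subseteq L\bbN$ consist of sufficiently separated scales lying between $\ell_\ast$ and the largest $\ell$ with $2^\ell \ll r_{\alpha_N,R_{\min}}$, and define $\Res$ as in the introduction. I will iterate Step~1 scale by scale along $\mathcal A_\ast$: at each visited scale one of two things must happen, either $\widetilde\sigma_\ell^\omega$ is balanced (contributing a factor $1-\rho$ to the avoidance of Res by Step~1), or $\widetilde\sigma_\ell^\omega$ is extreme (in which case, by the constraint $\sigma_\ell^\omega\leq 1/2$ built into $\cU_{\ell_\ast,N}^\omega$ and a pigeonhole argument along a compatible sub-collection of scales, at least $J$ balanced scales must still occur). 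The strong Markov property applied at successive exit times from nested boxes yields
\[
\sup_{x\in A_N\cap \cS_\infty}\sup_{U_0\in \cU_{\ell_\ast,N}^\omega} P_x^\omega\bigl[H_{\Res(U_0,I(J_N),J_N,L(J_N),\ell_\ast)}=\infty\bigr]\leq (1-\rho)^{J_N}+o(1),
\]
where the $o(1)$ absorbs the cost of requiring each visited ball to lie in the regularity region $B(0,r_{\alpha_N,R_{\min}})$ and the compatibility condition~\eqref{eq:compatible} on $(I,J,L,\ell_\ast)$. Choosing $J_N\nearrow \infty$ slowly, this avoidance probability tends to $0$ uniformly.

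\emph{Step 3 (porous interfaces).} For any $y\in \Res$ there is a scale $\ell$ with $\widetilde\sigma_\ell^\omega(y)\in[\widetilde\alpha,1-\widetilde\alpha]$, so by Step~1 the walk enters $U_1$ within distance $O(2^\ell)$, and hence crosses $S=\partial_{\cS_\infty}U_0$; by the porosity assumption $P_z^\omega[H_\Sigma<\tau_\epsilon]\geq \chi$ for $z\in S$ and by $\epsilon/2^{\ell_\ast}\to 0$, a renewal argument across excursions between $S$ and the complement of a slightly larger ball (modeled on~\cite[Lemma~4.3]{CN2021disconnection}) produces $\delta=\delta(\chi,\widetilde\alpha,\vartheta)<1$ with
\[
\sup_{y\in \Res}P_y^\omega[H_\Sigma=\infty]\leq \delta<1.
\]
Combining with the strong Markov property at $H_\Res$ and Step~2 gives~\eqref{eq:solid-1}. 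The stronger statement~\eqref{eq:solid-2} follows by conditioning on $H_{A_N}$ and applying~\eqref{eq:solid-1} to the post-$H_{A_N}$ walk, since on $\{H_{A_N}<\infty\}$ the event $\{H_\Sigma=\infty\}$ forces a walk starting from a point in $A_N\cap \cS_\infty$ not to reach $\Sigma$.

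\emph{Main obstacle.} The delicate point is the quantitative interplay between three asymptotic parameters: the precision $\alpha_N\searrow 0$ needed so that~\eqref{eq:volume-concentration} and~\eqref{eq:HK2} hold uniformly on $B(0,b_N)$; the resonance strength $J_N\nearrow\infty$ needed to drive Step~2 to zero; and the geometric ratio $\epsilon/2^{\ell_\ast}\to 0$ governing Step~3. The growth condition~\eqref{eq:a_N-bound} is what permits all three tradeoffs simultaneously, and verifying that $\rho(\widetilde\alpha,\vartheta)$ remains bounded away from zero uniformly on the admissible scales (rather than deteriorating as $\alpha_N\to 0$) is the key technical issue; this is precisely where the separation between $R_{\min}$ and $r_{\alpha_N,R_{\min}}$ provided by Section~\ref{sec:large-scale} is essential.
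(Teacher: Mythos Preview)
Your three-step outline matches the paper's template, but there are two concrete gaps that prevent the argument from closing.

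\textbf{Step 3 is too weak for the combination to conclude.} You extract from a point $y\in\Res$ only a fixed $\delta<1$ for $P_y^\omega[H_\Sigma=\infty]$. Combining with Step~2 via strong Markov gives
\[
P_x^\omega[H_\Sigma=\infty]\le P_x^\omega[H_{\Res}=\infty]+\delta,
\]
and the right-hand side tends to $\delta$, not to $0$. The paper fixes this by using the defining property of $\Res$: at $y\in\Res$ there are at least $J$ scales $\ell\in\cA_*$ with $\widetilde\sigma_\ell^\omega(y)\in[\widetilde\alpha,1-\widetilde\alpha]$, and these are nested ($5\cdot 2^{\ell'}\le \tfrac14 2^\ell$ for $\ell'<\ell$ in $\cA_*$). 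Applying your Step~1 (in the form of Lemma~\ref{lem:hit-sigma}) and the strong Markov property at the successive exit times of $B(y,5\cdot 2^\ell)$ yields $P_y^\omega[H_\Sigma>T_{B(y,5\cdot 2^{\max\cA_*})}]\le (1-c_5(\chi))^{J_N}$, which does tend to zero.

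\textbf{Step 2's bound $(1-\rho)^{J_N}$ is not justified by the proposed dichotomy.} Your pigeonhole argument conflates two different notions: visiting, at $J$ separate times, points that are each balanced at one scale, versus finding a \emph{single} point where $J$ scales are simultaneously balanced (which is what $\Res$ demands). Hitting $U_1$ via Step~1 at a balanced scale does not, by itself, contribute a factor toward hitting $\Res$. The paper's mechanism is two-tiered: first, since $U_0$ is bounded and $\sigma_\ell^\omega\le\tfrac12$ on $A_N$, the walk must cross $\{\sigma_\ell^\omega\approx\tfrac12\}$ at \emph{each} of the $I$ ``macro'' scales $\ell\in\cA$; second, from any such crossing point, Proposition~\ref{prop:positive-probability-adv} gives a probability $c_2(J)$ of drilling down through $J$ consecutive finer scales to land in $\Res$. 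The crucial subtlety you miss is that $c_2(J)=c_1(\delta(J))^J$ \emph{depends on $J$} and may decay as $J\to\infty$ (because $\delta(J)\to 0$, cf.~\eqref{eq:alpha_J}). This forces the $\Gamma_k$-recursion of Lemma~\ref{lem:Gamma} and the doubly-exponential choice $I(J)$ in~\eqref{eq:tilde_I_0} to compensate. A constant $\rho$ independent of $J$, with a clean $(1-\rho)^{J_N}$ bound, is not available. Your ``main obstacle'' paragraph correctly senses a degeneration of constants, but locates it at $\alpha_N\to 0$ rather than at $J_N\to\infty$; the former is handled by the regularity set $\Omega_{\mathrm{reg}}$, the latter is what drives the entire $I'$-family machinery.
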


We immediately obtain the following corollary concerning the capacity:
\begin{corollary}[$d\geq 3$, \refA, \eqref{eq:ass-bounded}, \eqref{eq:a_N-bound}]
\label{cor:Capacity-Lower-Bound}
Under the same assumptions as Theorem~\ref{thm:solidification}, we have the following: For $\mathbb{P}^u$-almost every $\omega$, it holds that
\begin{equation}
\label{eq:Capacity-lower-bound-statement}
\liminf_{N \rightarrow \infty}\inf_{\epsilon/2^{\ell_*}\leq c_N,a_N\leq 2^{\ell_*}\leq b_N} \inf_{U_0\in \cU_{\ell_*,N}^\omega} \inf_{\Sigma\in \cS_{U_0,\epsilon,\chi}^\omega} \frac{\capa^\omega(\Sigma)}{\capa^\omega(A_N \cap \cS_\infty)} \geq 1.
\end{equation}
\end{corollary}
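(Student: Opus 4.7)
The plan is to deduce the capacity lower bound from Theorem~\ref{thm:solidification} via a standard duality argument based on reversibility and the last-exit decomposition~\eqref{eq:Last-exit}. Since $A$ has non-empty interior and $\eta(u) > 0$ by \refStwo, for $\mathbb{P}^u$-a.e.~$\omega$ the set $A_N \cap \cS_\infty$ is non-empty and finite for all $N$ sufficiently large, so the normalized equilibrium distribution
\[
\nu_N^\omega(x) \ldef \frac{e_{A_N \cap \cS_\infty}^\omega(x)}{\capa^\omega(A_N \cap \cS_\infty)}, \qquad x \in \cS_\infty,
\]
is well-defined and supported on $A_N \cap \cS_\infty$.

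The key step is a two-sided control of the averaged absorption probability $\sum_{x} \nu_N^\omega(x) P_x^\omega[H_\Sigma < \infty]$. For the upper bound, apply~\eqref{eq:Last-exit} with $A = \Sigma$, swap the order of summation using the symmetry $g^\omega(x,y) = g^\omega(y,x)$, and apply~\eqref{eq:Last-exit} a second time with $A = A_N \cap \cS_\infty$ to obtain
\[
\sum_{x \in \cS_\infty} \nu_N^\omega(x) P_x^\omega[H_\Sigma < \infty] = \frac{1}{\capa^\omega(A_N \cap \cS_\infty)} \sum_{y \in \Sigma} e_\Sigma^\omega(y) \, P_y^\omega[H_{A_N \cap \cS_\infty} < \infty] \leq \frac{\capa^\omega(\Sigma)}{\capa^\omega(A_N \cap \cS_\infty)}.
\]
For the matching lower bound, since $\nu_N^\omega$ is a probability measure supported on $A_N \cap \cS_\infty$, we have trivially
\[
\sum_{x \in \cS_\infty} \nu_N^\omega(x) P_x^\omega[H_\Sigma < \infty] \geq 1 - \sup_{x \in A_N \cap \cS_\infty} P_x^\omega[H_\Sigma = \infty].
\]

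Combining the two displays gives $\capa^\omega(\Sigma)/\capa^\omega(A_N \cap \cS_\infty) \geq 1 - \sup_{x \in A_N \cap \cS_\infty} P_x^\omega[H_\Sigma = \infty]$ for every admissible tuple $(\epsilon, \ell_\ast, U_0, \Sigma)$. Taking the infimum on the left, the corresponding supremum on the right over all such admissible tuples, and then $\liminf_{N \to \infty}$, the right-hand side tends to $1$ by the uniform asymptotic~\eqref{eq:solid-1}, which concludes the proof. No genuine obstacle arises beyond Theorem~\ref{thm:solidification} itself: the above upper bound holds pointwise in $(\epsilon, \ell_\ast, U_0, \Sigma)$, so the uniformity built into~\eqref{eq:solid-1} is preserved automatically; the only mild care needed is to ensure $\capa^\omega(A_N \cap \cS_\infty) > 0$ for large $N$, which follows from $\mathring{A} \neq \varnothing$ together with the almost-sure positivity of $\eta(u)$.
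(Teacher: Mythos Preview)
Your proof is correct and follows essentially the same route as the paper: both arguments pivot on the duality identity $\sum_{x} e_{A_N \cap \cS_\infty}^\omega(x)\,P_x^\omega[H_\Sigma < \infty] = \sum_{y} e_\Sigma^\omega(y)\,P_y^\omega[H_{A_N \cap \cS_\infty} < \infty]$ obtained from the last-exit decomposition~\eqref{eq:Last-exit} and the symmetry of $g^\omega$, then bound the right-hand side above by $\capa^\omega(\Sigma)$ and the left-hand side below by $\inf_{x \in A_N \cap \cS_\infty} P_x^\omega[H_\Sigma < \infty]\,\capa^\omega(A_N\cap\cS_\infty)$. Your presentation via the normalized equilibrium measure $\nu_N^\omega$ is just a repackaging of the same chain of inequalities.
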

\begin{proof}
The proof of Corollary~\ref{cor:Capacity-Lower-Bound} assuming Theorem~\ref{thm:solidification} follows similarly as that of~\cite[Corollary 4.2]{CN2021disconnection}, and we only sketch it for completeness. Indeed, consider $\omega$ in a set of full $\mathbb{P}^u$-measure such that~\eqref{eq:solid-1} holds. Then, for $N \geq N_0(\omega)$ and $\epsilon/2^{\ell_\ast} \leq c_N$, $a_N \leq 2^{\ell_\ast} \leq b_N$, $U_0 \in \cU_{\ell_\ast, N}^\omega$, and $\Sigma \in \cS^\omega_{U_0, \epsilon, \chi}$, we have
\begin{equation}
\begin{split}
\capa^\omega(\Sigma) & \stackrel{\eqref{eq:capacity}}{\geq } \sum_{z \in \mathcal{S}_\infty} P_{z}^\omega[H_{A_N \cap \cS_\infty} < \infty] e_{\Sigma}^\omega(z) \\ & \stackrel{\eqref{eq:Last-exit}}{\geq } \inf_{x \in A_N \cap \cS_\infty} P^\omega_x[H_\Sigma < \infty]\capa^\omega(A_N \cap \cS_\infty),
\end{split}
\end{equation}
having also used the symmetry of $g^\omega(\cdot,\cdot)$.
\end{proof}

We briefly explain the restriction inherent in Theorem~\ref{thm:solidification} compared with the uniformly elliptic counterpart~\cite[Theorem 4.1]{CN2021disconnection}. In our set-up, we introduce the sequences $(a_N)_{N\geq 0}$ and $(b_N)_{N\geq 0}$ as in~\eqref{eq:a_N-bound} to control the growth rate of $2^{\ell_*}$ in the present theorem. The additional restriction for $(\epsilon,\ell_*)$ over the first supremum requires $2^{\ell_*}$ to fulfill $
a_N \leq 2^{\ell_*} \leq b_N$. One should view $a_N$ heuristically as determining a lower bound on the ``smallest spatial scale'' inspected, and $b_N$ as the corresponding ``largest spatial scale''. Such a restriction is natural, since one cannot expect good regularity properties of the cluster to hold uniformly in small boxes (essentially of size $a_N$) outside a certain box of size $b_N$. Heuristically, the growth condition~\eqref{eq:a_N-bound} is chosen to ensure that such regions are not inspected.

\subsection{Local density functions}\label{subsec:local-density}

In this subsection, we investigate how a random walk enters regions in which certain local densities are well-balanced on multiple scales. Recall from \eqref{eq:sigma_l^omega}--\eqref{eq:sigma_l^omega-2} that the definition of the local density functions depends on a set $U_1\subseteq \cS_\infty$ (with $U_0=\cS_\infty\setminus U_1$), a non-negative integer $\ell$, and a realization $\omega$ of the environment. We assume throughout this subsection that
\begin{equation}\label{eq:assum-4-1}
  \omega \in \Omega_0\quad \text{and}\quad \mbox{$U_0\subseteq \cS_\infty$}
\end{equation}
(recall $\Omega_0$ in \eqref{eq:Omega-0}) and will impose further conditions on $\ell$ and the set of $\omega$ in the statements. \medskip

We begin with two properties of the local density functions, which are analogous to the continuum set-up \cite[Lemma 1.1]{nitzschner2017solidification} and the discrete set-up with uniformly elliptic conductances \cite[Lemma 4.3]{CN2021disconnection}. Recall that $\eta(\cdot)$ is assumed to be strictly positive in \refStwo, $\alpha$ is introduced in Proposition \ref{prop:Quantitative-control-volume} to represent the deviation between the relative volume of the cluster in a box and its expectation, $\Rden(\omega,\alpha)$ is defined in \eqref{eq:minimal-scale-ball} as the minimal scale for spatial volume regularity, $\Omega_{\mathrm{den}}^\alpha$ is defined in \eqref{eq:Omega_den}, and the sequence $(r_{\alpha, N})_{N\geq 1}$ is defined in \eqref{eq:r_N} and refers to the radius of a sup-norm ball in which one has uniformly good regularity properties (see Propositions \ref{prop:uni-den} and \ref{prop:uni-HK}). 

\begin{lemma}[$d\geq 3$, \refA, \eqref{eq:assum-4-1}]\label{lem:sigma-regular} Let $\alpha\in (0,\frac{1}{3})$, $\omega\in\Omega_{\mathrm{den}}^\alpha$. 
Assume $2^{\ell} > 2^{\ell'} \geq \Rden(\omega,\alpha)$ and $r_{\alpha,2^{\ell'}}-2^{\ell}-2^{\ell'}>0$.
\begin{enumerate}[label=(\roman*)]
\item For any $x,x+y\in \cS_\infty \cap B(0,r_{\alpha,2^{\ell}})$ which are connected by a path in $\cS_\infty \cap B(0,r_{\alpha, 2^{\ell}})$, the local density functions are Lipschitz-continuous in the sense that
\begin{equation}\label{eq:Lip-cts}
|\sigma_\ell^\omega(x) - \sigma_\ell^\omega(x+y)| \leq c_{\mathrm{lip}} 
\cdot \widehat{d}_\omega(x,x+y),\quad c_{\rm{lip}} \stackrel{\mathrm{def}}{=} \frac{6\cdot 2^{-\ell}}{\eta(u)},
\end{equation}
where $\widehat{d}_\omega$ denotes the graph distance on $\cS_\infty \cap B(0,r_{\alpha, 2^{\ell}})$.

\item 
For $x\in \cS_\infty \cap B(0,r_{\alpha, 2^{\ell'}}-2^{\ell}-2^{\ell'})$,
we have 
\begin{equation}\label{eq:c0'cont}
  (\sigma_{\ell'}^\omega)_{x,\ell}^\omega \in 
  \left[
  \frac{1-\alpha}{1+\alpha} \sigma_\ell^\omega(x) - c_0 2^{\ell'-\ell},
  \frac{1+\alpha}{1-\alpha} \sigma_\ell^\omega(x) + c_0 2^{\ell'-\ell}
  \right],\quad c_0 \stackrel{\mathrm{def}}{=} \frac{3d\cdot 2^{d-1}}{\eta(u)}
\end{equation}
(see \eqref{eq:average-ball} for notation).
\end{enumerate}
\end{lemma}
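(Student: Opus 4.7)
The plan is to treat the two claims separately, each time leveraging the uniform volume concentration from Proposition \ref{prop:uni-den}, which gives $|B(y,2^k) \cap \cS_\infty| \in [(1-\alpha)(2\cdot 2^k+1)^d, (1+\alpha)(2\cdot 2^k+1)^d] \cdot \eta(u)$ for every $y \in \cS_\infty \cap B(0,r_{\alpha,2^k})$, provided $2^k \geq \Rden(\omega,\alpha)$. The hypothesis $2^\ell > 2^{\ell'} \geq \Rden(\omega,\alpha)$ supplies this control at both relevant scales, and the various margin conditions on the radii ($r_{\alpha,2^\ell}$ in (i); $r_{\alpha,2^{\ell'}}-2^\ell-2^{\ell'}$ in (ii)) are tailored exactly so that every auxiliary ball one forms is centered inside the appropriate good region.

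For (i), I would reduce to the nearest-neighbor case $\widehat{d}_\omega(x,x+y)=1$ by telescoping along a geodesic path realizing the graph distance, whose vertices all lie in $\cS_\infty \cap B(0,r_{\alpha,2^\ell})$ by definition of $\widehat d_\omega$. For neighboring sites $x$ and $x'=x+y$ with $|y|_1=1$, the symmetric difference $B(x,2^\ell)\triangle B(x',2^\ell)$ is a union of two parallel slabs of unit thickness and therefore has cardinality at most $2(2\cdot 2^\ell+1)^{d-1}$. Writing $\sigma_\ell^\omega(x)=U/V$ and $\sigma_\ell^\omega(x')=U'/V'$ and expanding $U/V-U'/V'=(U-U')/V+U'(V'-V)/(VV')$ bounds the increment by $(|U-U'|+|V-V'|)/V$; inserting the lower bound $V\geq (1-\alpha)\eta(u)(2\cdot 2^\ell+1)^d \geq \tfrac{2}{3}\eta(u)(2\cdot 2^\ell+1)^d$ (using $\alpha<\tfrac13$) then yields $c_{\mathrm{lip}}=6\cdot 2^{-\ell}/\eta(u)$ after a routine estimate.

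For (ii), the core step is a Fubini-type rearrangement. Expanding
\[
(\sigma_{\ell'}^\omega)_{x,\ell}^\omega = \frac{1}{|B(x,2^\ell)\cap \cS_\infty|}\sum_{y\in B(x,2^\ell)\cap \cS_\infty}\frac{|B(y,2^{\ell'})\cap U_1|}{|B(y,2^{\ell'})\cap \cS_\infty|},
\]
I would first replace the inner denominator by $(1\pm\alpha)\eta(u)(2\cdot 2^{\ell'}+1)^d$ via Proposition \ref{prop:uni-den} (the condition $|x|_\infty\leq r_{\alpha,2^{\ell'}}-2^\ell-2^{\ell'}$ guarantees that $y$ and every site of $B(y,2^{\ell'})$ lie in $B(0,r_{\alpha,2^{\ell'}})$), then pull the prefactor out and swap summation order to obtain, up to the factor $(1\pm\alpha)^{-1}$, a sum of $|B(z,2^{\ell'})\cap B(x,2^\ell)\cap \cS_\infty|$ over $z\in U_1\cap B(x,2^\ell+2^{\ell'})$. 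Bounding this inner intersection by $|B(z,2^{\ell'})\cap \cS_\infty|$ and applying volume regularity once more cancels the $(2\cdot 2^{\ell'}+1)^d$ factors and leaves $\tfrac{1\pm\alpha}{1\mp\alpha}\cdot|U_1\cap B(x,2^\ell+2^{\ell'})|/|B(x,2^\ell)\cap\cS_\infty|$. Splitting the numerator as $|U_1\cap B(x,2^\ell)|+|U_1\cap(B(x,2^\ell+2^{\ell'})\setminus B(x,2^\ell))|$ produces the main term $\tfrac{1\pm\alpha}{1\mp\alpha}\sigma_\ell^\omega(x)$; the annular error is handled via $|B(x,2^\ell+2^{\ell'})|-|B(x,2^\ell)|\leq 2d\cdot 2^{\ell'}(2\cdot 2^\ell+2\cdot 2^{\ell'}+1)^{d-1}$, which after division by $|B(x,2^\ell)\cap \cS_\infty|\geq (1-\alpha)\eta(u)(2\cdot 2^\ell+1)^d$ yields a correction of the form $c_0\cdot 2^{\ell'-\ell}$ with $c_0=3d\cdot 2^{d-1}/\eta(u)$.

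The main obstacle is not conceptual but one of bookkeeping: verifying at each step that all auxiliary ball centers lie inside the ``good region'' $B(0,r_{\alpha,2^k})$ where Proposition \ref{prop:uni-den} provides the required uniform control — which is precisely why the hypothesis of (ii) includes the margin $-2^\ell-2^{\ell'}$ — and then tracking the constants tightly enough for them to coalesce into the stated values of $c_{\mathrm{lip}}$ and $c_0$, a task that requires using $\alpha<\tfrac13$ to absorb factors of $\tfrac{1+\alpha}{1-\alpha}$ into the dimensional constants.
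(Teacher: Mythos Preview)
Your proposal is correct and follows essentially the same route as the paper: for (i) a telescoping argument along a geodesic combined with a symmetric-difference bound and the volume lower bound from Proposition~\ref{prop:uni-den}; for (ii) a Fubini rearrangement (the paper packages the inner sum into an auxiliary function $\psi_{\ell,\ell'}^{\omega,x}(z)$, but the content is identical), followed by volume regularity at scale $2^{\ell'}$ and an annular remainder estimate. One small imprecision: your ``$\pm$'' shorthand in (ii) writes the intermediate quantity with $B(x,2^\ell+2^{\ell'})$, which is correct for the upper bound, but for the lower bound one must instead restrict to $z\in B(x,2^\ell-2^{\ell'})$ so that the inner intersection equals $|B(z,2^{\ell'})\cap\cS_\infty|$ exactly---the paper handles the two directions separately for this reason, and you should do the same when writing out the details.
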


Before we state the proof of the lemma above, we briefly comment on the choices of the scales $\ell, \ell'$, as well as the of coefficients $c_{\mathrm{lip}}$ and $c_0$. 

\begin{remark}
\label{rem:Further-remark-choices}
  \begin{enumerate}[label=(\arabic*),leftmargin=*]
    \item We have $R_{\mathrm{den}}(\omega, \alpha)<\infty$ since $\omega \in \Omega_{\mathrm{den}}^\alpha$. There exists $\ell>\ell'$ such that $r_{\alpha, 2^{\ell'}}-2^{\ell}-2^{\ell'}>0$, because $r_{\alpha, n}= \exp(\kappa_{\text{reg}}(\alpha)(\log n)^{1+\DeltaS})\gg n$ as $n\to\infty$, so that we can first choose a large enough $\ell'$ leaving enough room for the choice of $\ell$. 
    \item We highlight that for a given $\alpha\in (0,\frac{1}{3})$, $c_{\mathrm{lip}}$ and $c_0$ are not optimal and could be chosen slightly smaller (depending on $\alpha$), but since this plays no role for our analysis, we work with the choices in~\eqref{eq:Lip-cts} and~\eqref{eq:c0'cont}.
  \end{enumerate}
\end{remark}

\begin{proof}[Proof of Lemma $\ref{lem:sigma-regular}$]
Let $\ell,\ell'$ satisfy the conditions in the statement of the lemma. We begin with the proof of (i). Let $x,x+y\in \cS_\infty\cap B(0,r_{\alpha, 2^\ell})$ and assume that there is a nearest-neighbor path $\pi$ in $\cS_\infty\cap B(0,r_{\alpha, 2^\ell})$ of length $n= \widehat{d}_\omega(x,x+y)$ connecting the two points (otherwise, $\widehat{d}_\omega(x,x+y) = \infty$ and the claim is immediate). By the definition of $\sigma_\ell^\omega(\cdot)$ in \eqref{eq:sigma_l^omega}, for $0\leq i \leq n-1$,
\begin{equation}\label{eq:sigma-1-1}
\bigr|\sigma_\ell^\omega(\pi_{i}) - \sigma_\ell^\omega(\pi_{i+1})\bigl| = \left|\frac{| B(\pi_i,2^\ell)\cap U_1 |}{|B(\pi_i,2^\ell)\cap \cS_\infty|} - \frac{|B(\pi_{i+1},2^\ell)\cap U_1|}{|B(\pi_{i+1},2^\ell)\cap \cS_\infty|}\right|.
\end{equation}
Note that we have the elementary inequality for $a_1,a_2,b_1,b_2\in \bbR_+$, 
\begin{equation}\label{eq:elementary}
\left|\frac{b_1}{a_1} - \frac{b_2}{a_2}\right| 
\leq \frac{|b_1a_2-b_1a_1|+|b_1a_1-a_1b_2|}{a_1a_2} 
= \frac{b_1|a_2-a_1|}{a_1 a_2} + \frac{|b_1-b_2|}{a_2}.
\end{equation}
We then apply~\eqref{eq:elementary} to~\eqref{eq:sigma-1-1}, and use $\frac{| B(\pi_i,2^\ell)\cap U_1|}{|B(\pi_i,2^\ell)\cap \cS_\infty|} \leq 1$, obtaining that 
\begin{equation}\label{eq:sigma-1-2}
\begin{aligned}
&\bigr|\sigma_\ell^\omega(\pi_{i}) - \sigma_\ell^\omega(\pi_{i+1})\bigl|   \\
\leq &~\frac{\left||B(\pi_{i+1},2^\ell)\cap \cS_\infty|-|B(\pi_i,2^\ell)\cap \cS_\infty|\right|}{|B(\pi_{i+1},2^\ell)\cap \cS_\infty|} + \frac{\left||B(\pi_i,2^\ell)\cap U_1|-| B(\pi_{{i+1}},2^\ell)\cap U_1|\right|}{|B(\pi_{i+1},2^\ell)\cap \cS_\infty|}.
\end{aligned}
\end{equation}
Both of the numerators in the above expression can be bounded by
\begin{equation}\label{eq:sym-diff-bound}
  \begin{aligned}
  |(B(\pi_i,2^\ell)\Delta B(\pi_{i+1},2^\ell)) \cap \cS_\infty | &\leq |B(\pi_i,2^\ell)\Delta B(\pi_{i+1},2^\ell) | = 4(2^{\ell+1}+1)^{d-1}
\end{aligned}
\end{equation}
(with $\Delta$ denoting the symmetric difference between sets). Applying the latter to \eqref{eq:sigma-1-2} yields
\begin{equation}\label{eq:successive-1}
  |\sigma_\ell^\omega(\pi_i) - \sigma_\ell^\omega(\pi_{i+1})| \stackrel{\eqref{eq:sym-diff-bound},\eqref{eq:volume-concentration}}{\leq} \frac{8 (2^{\ell+1}+1)^{d-1}}{(1-\alpha)\eta(u)(2^{\ell+1}+1)^d} \leq \frac{4\cdot 2^{-\ell}}{(1-\alpha)\eta(u)}
\end{equation}
(the application of \eqref{eq:volume-concentration} is possible since $2^{\ell}\geq \Rden(\omega,\alpha)$ and $\pi_i\in\cS_\infty\cap B(0,r_{\alpha, 2^\ell})$, for every $0 \leq i \leq n-1$). 
Lastly, 
\[
  |\sigma_\ell^\omega(x) - \sigma_\ell^\omega(x+y)| \leq \sum_{i=0}^{n-1}|\sigma_\ell^\omega(\pi_i) - \sigma_\ell^\omega(\pi_{i+1})| \stackrel{\eqref{eq:successive-1}}{\leq} \frac{4\cdot 2^{-\ell}}{(1-\alpha)\eta(u)} \widehat{d}_\omega(x,x+y).
\]
The Lipschitz continuity \eqref{eq:Lip-cts} then follows since $\alpha<\frac{1}{3}$.
\medskip

We turn to the proof of (ii). Let $x\in \cS_\infty\cap B(0,r_{\alpha, 2^{\ell'}}-2^{\ell}-2^{\ell'})$. Note that 
\begin{align}
  (\sigma_{\ell'}^\omega)_{x,\ell}^\omega & = 
\frac{1}{|B(x,2^\ell)\cap\cS_\infty|} \sum_y \sum_z \frac{1}{|B(y,2^{\ell'})\cap\cS_\infty|} \IND_{B(x,2^\ell)\cap \cS_\infty}(y) \IND_{B(y,2^{\ell'})}(z) \IND_{U_1}(z) \notag \\
&= \frac{1}{|B(x,2^\ell)\cap\cS_\infty|} \sum_{z} \psi_{\ell,\ell'}^{\omega, x}(z) \IND_{U_1}(z) , \label{eq:sigma-1}
\end{align}
where 
\begin{equation}\label{eq:psi-1}
  \psi_{\ell,\ell'}^{\omega,x}(z) \stackrel{\mathrm{def}}{=} \sum_y \frac{1}{|B(y,2^{\ell'})\cap\cS_\infty|} \IND_{B(x,2^\ell)\cap B(z,2^{\ell'})\cap \cS_\infty}(y)
\end{equation}
(with the convention $0/0=0$). It is clear by definition of $\psi_{\ell,\ell'}^{\omega,x}$ that 
\begin{equation}\label{eq:psi-2}
\psi_{\ell,\ell'}^{\omega,x}(z) = 
\begin{cases}
  \sum_y \frac{1}{|B(y,2^{\ell'})\cap\cS_\infty|} \IND_{B(z,2^{\ell'})\cap \cS_\infty}(y) & \text{if $z\in B(x,2^{\ell}-2^{\ell'})$,}\\
   0 & \text{if $z\in \bbZ^d\setminus B(x,2^{\ell}+2^{\ell'})$.}
\end{cases}
\end{equation}
Moreover, we have for $z\in B(x,2^{\ell}-2^{\ell'})$,
\begin{equation}\label{eq:psi-LB}
  \psi_{\ell,\ell'}^{\omega,x}(z) \stackrel{\eqref{eq:psi-2},\eqref{eq:volume-concentration}}{\geq} \frac{|B(z,2^{\ell'})\cap \cS_\infty|}{(1+\alpha)\eta(u)(2^{\ell'+1}+1)^d}  \stackrel{\eqref{eq:volume-concentration}}{\geq} \frac{1-\alpha}{1+\alpha}
\end{equation}
(in both inequalities the use of \eqref{eq:volume-concentration} is possible since $y\in B(z,2^{\ell'})\subseteq B(x,2^\ell)\subseteq B(0,r_{\alpha, 2^{\ell'}}-2^{\ell'})$ and $z\in B(x,2^\ell-2^{\ell'})\subseteq B(0,r_{\alpha, 2^{\ell'}}-2\cdot 2^{\ell'})$). Note that by \eqref{eq:psi-2},  $\psi_{\ell,\ell'}^{\omega,x}(z)=0$ for $z\notin B(x,2^{\ell}+2^{\ell'})$, while for $z\in B(x,2^{\ell}+2^{\ell'})$, we have
\begin{equation}\label{eq:psi-UB}
  \psi_{\ell,\ell'}^{\omega,x}(z) \stackrel{\eqref{eq:psi-1},\eqref{eq:volume-concentration}}{\leq} \frac{|B(z,2^{\ell'})\cap \cS_\infty|}{(1-\alpha)\eta(u)(2^{\ell'+1}+1)^d} 
  \stackrel{\eqref{eq:volume-concentration}}{\leq} \frac{1+\alpha}{1-\alpha}  
\end{equation}
(noting again that the use of \eqref{eq:volume-concentration} is possible since $y\in B(x,2^\ell)\subseteq B(0,r_{\alpha,2^{\ell'}}-2^{\ell'})$ and $z\in B(x,2^{\ell}+2^{\ell'})\subseteq B(0,r_{\alpha, 2^{\ell'}})$), which shows that $\psi_{\ell,\ell'}^{\omega,x}(z)\leq \tfrac{1+\alpha}{1-\alpha}$ for all $z\in\bbZ^d$. 
We will use~\eqref{eq:psi-LB} to bound $(\sigma_{\ell'}^\omega)_{x,\ell}^\omega$ from below by restricting $\psi_{\ell,\ell'}^x$ on $B(x,2^{\ell}-2^{\ell'})$, and bound the same quantity from above by using the upper bound \eqref{eq:psi-UB} of $\psi_{\ell,\ell'}^x$ on all its support $B(x,2^{\ell}+2^{\ell'})$. As for the lower bound, we recall~\eqref{eq:sigma-1} and see that
\begin{align*}
(\sigma_{\ell'}^\omega)_{x,\ell}^\omega &\stackrel{\eqref{eq:psi-LB}}{\geq} \frac{1-\alpha}{1+\alpha} \frac{1}{|B(x,2^\ell)\cap\cS_\infty|} \sum_{z\in B(x,2^{\ell}-2^{\ell'})} \IND_{U_1}(z) \\
& \geq \frac{1-\alpha}{1+\alpha} \frac{|B(x,2^{\ell}-2^{\ell'})\cap U_1|}{|B(x,2^\ell)\cap\cS_\infty|}\\
& \geq \frac{1-\alpha}{1+\alpha} \left[\sigma_\ell^\omega(x) - \frac{|(B(x,2^\ell)\setminus B(x,2^{\ell}-2^{\ell'}))\cap U_1|}{|B(x,2^{\ell})\cap \cS_\infty|}\right],
\end{align*}
using a simple union bound in the last step. As we have 
\begin{align*}
  |(B(x,2^\ell)&\setminus B(x,2^{\ell}-2^{\ell'}))\cap U_1|\leq |B(x,2^\ell)\setminus B(x,2^{\ell}-2^{\ell'})| \\ 
  &= |B(x,2^\ell)| - |B(x,2^{\ell}-2^{\ell'})| = (2^{\ell+1}+1)^d - (2^{\ell+1}-2^{\ell'+1}+1)^d\\
  &= \int_{2^{\ell+1}-2^{\ell'+1}+1}^{2^{\ell+1}+1} d u^{d-1} \De u\leq d (2^{\ell+1}+1)^{d-1}\cdot 2^{\ell'+1},
\end{align*}
it follows that 
\begin{equation}\label{eq:sigma-2}
  \begin{aligned}
  (\sigma_{\ell'}^\omega)_{x,\ell}^\omega & \stackrel{\eqref{eq:volume-concentration}}{\geq} \frac{1-\alpha}{1+\alpha} \sigma_\ell^\omega(x) - \frac{1-\alpha}{1+\alpha} \cdot \frac{d (2^{\ell+1}+1)^{d-1}\cdot 2^{\ell'+1}}{(1-\alpha) \eta(u) (2^{\ell+1}+1)^d}\\
  & \geq \frac{1-\alpha}{1+\alpha} \sigma_\ell^\omega(x) - \frac{d\cdot 2^{\ell'-\ell}}{(1+\alpha)\eta(u)}.
\end{aligned}
\end{equation}
Similarly, for the upper bound, we obtain 
\begin{equation}\label{eq:sigma-3}
  (\sigma_{\ell'}^\omega)_{x,\ell}^\omega \leq \frac{1+\alpha}{1-\alpha} \sigma_\ell^\omega(x) + \frac{1+\alpha}{(1-\alpha)^2} \cdot \frac{d (2^{\ell+1}+2^{\ell'+1}+1)^{d-1}\cdot 2^{\ell'+1}}{\eta(u)(2^{\ell+1}+1)^d}.
\end{equation}
Using inequality $(a+b)^p \leq 2^{p-1}(a^p+b^p)$ for $a,b\in \bbR_+$ and $p\geq 1$, one obtains
\begin{align}
  (\sigma_{\ell'}^\omega)_{x,\ell}^\omega &  \leq \frac{1+\alpha}{1-\alpha} \sigma_\ell^\omega(x) + \frac{(1+\alpha)d\cdot 2^{d-1} \cdot 2^{\ell'-\ell}}{(1-\alpha)^2\eta(u)} \label{eq:sigma-4}.
\end{align}
Finally, we note that $\frac{1+\alpha}{(1-\alpha)^2}\leq 3$ for $\alpha<\frac{1}{3}$. Combining \eqref{eq:sigma-2} and \eqref{eq:sigma-4} yields the claim (ii).
\end{proof}

\begin{lemma}
Let $(\Gamma,\cA,\mathrm{P})$ be a probability space and $Y$ be a $[0,1]$-valued random variable with finite expectation $\mu= \mathrm{E}[Y]$. Then for $0\leq \delta \leq \mu \wedge (1-\mu)$, at least one of the equations below holds:
\begin{subequations}
  \begin{empheq}[left=\empheqlbrace]{align}
&\mathrm{P}(Y>\mu+\delta) \geq \frac{\delta}{2} \quad\text{and}\quad \mathrm{P}(Y<\mu-\delta) \geq \frac{\delta}{2}\\
&\mathrm{P}(\mu-\delta\leq Y\leq \mu+\delta)\geq \frac{1}{4}-\frac{\delta}{2}.
\end{empheq}
\end{subequations}
\end{lemma}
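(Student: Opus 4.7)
I introduce the three probabilities
$p_+ = \mathrm{P}(Y > \mu + \delta)$,
$p_- = \mathrm{P}(Y < \mu - \delta)$, and
$p_0 = \mathrm{P}(\mu - \delta \leq Y \leq \mu + \delta)$,
which sum to $1$. If both tails satisfy $p_\pm \geq \delta/2$, then (a) holds and we are done; otherwise at least one of $p_+ < \delta/2$ or $p_- < \delta/2$ occurs. The map $Y \mapsto 1 - Y$ sends $(\mu, p_+, p_-, p_0)$ to $(1 - \mu, p_-, p_+, p_0)$ and preserves the hypothesis $\delta \leq \mu \wedge (1-\mu)$, so without loss of generality I may reduce to the case $p_- < \delta/2$, aiming to prove (b).

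The key input is that $Y$ is $[0,1]$-valued, so $Y \geq (\mu+\delta)\IND_{\{Y > \mu + \delta\}} + (\mu - \delta)\IND_{\{\mu - \delta \leq Y \leq \mu + \delta\}}$ pointwise. Taking expectations yields the one-sided first-moment bound
\begin{equation*}
\mu \;\geq\; (\mu + \delta)\, p_+ + (\mu - \delta)\, p_0.
\end{equation*}
Substituting $p_+ = 1 - p_0 - p_-$ and collecting terms in $p_0$ gives
\begin{equation*}
2\delta \, p_0 \;\geq\; \delta - (\mu + \delta)\, p_-.
\end{equation*}
Combining the assumption $p_- < \delta/2$ with $\mu + \delta \leq 1$ (which follows from $\delta \leq 1 - \mu$) yields
$p_0 \geq \tfrac{1}{2} - \tfrac{\mu + \delta}{4} \geq \tfrac{1}{4} \geq \tfrac{1}{4} - \tfrac{\delta}{2}$,
which is the bound (b). (In fact, the argument gives the slightly sharper constant $\tfrac14$; the slack $-\delta/2$ in the statement is harmless.)

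There is no genuine obstacle here. The only point requiring a modicum of care is to apply the appropriate one-sided first-moment inequality for $Y$: a \emph{lower} bound on $\mathrm{E}[Y]$ when $p_-$ is small, and an \emph{upper} bound $\mu \leq p_+ + (\mu+\delta)p_0 + (\mu-\delta)p_-$ when $p_+$ is small. The symmetry reduction $Y \mapsto 1 - Y$ encapsulates this choice, after which the proof reduces to the one-line rearrangement above.
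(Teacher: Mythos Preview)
Your proof is correct. The paper does not give its own proof of this lemma but simply refers to \cite[Lemma~1.2]{nitzschner2017solidification}; your first-moment argument together with the symmetry $Y\mapsto 1-Y$ is exactly the kind of elementary computation one expects here, and in fact yields the slightly stronger bound $p_0\geq \tfrac14$ rather than $\tfrac14-\tfrac{\delta}{2}$ as you observe.
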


We refer to \cite[Lemma 1.2]{nitzschner2017solidification} for the proof of the previous lemma. Applying this result with $\mathrm{P}=  \mu_{x,\ell}^\omega$,  $Y=  \sigma_{\ell'}^\omega$, and
\[
\mu = (\sigma_{\ell'}^\omega)_{x,\ell}^\omega = \frac{1}{|B(x,2^\ell)\cap \cS_\infty|} \sum_{y\in B(x,2^\ell)\cap \cS_\infty} \sigma_{\ell'}^\omega(y),
\]
we obtain that for $0\leq \delta \leq (\sigma_{\ell'}^\omega)_{x,\ell}^\omega \wedge (1-(\sigma_{\ell'}^\omega)_{x,\ell}^\omega)$, at least one of the equations below holds:
\begin{subequations}\label{eq:alter2}
  \begin{empheq}[left=\empheqlbrace]{align}
    & \mu_{x,\ell}^\omega\big[\sigma_{\ell'}^\omega > (\sigma_{\ell'}^\omega)_{x,\ell}^\omega + \delta\big] \geq \frac{\delta}{2}
    \quad \text{and} \quad
    \mu_{x,\ell}^\omega\big[\sigma_{\ell'}^\omega < (\sigma_{\ell'}^\omega)_{x,\ell}^\omega - \delta\big] \geq \frac{\delta}{2}, \label{eq:alter2-i}\\
    &
    \mu_{x,\ell}^\omega\big[(\sigma_{\ell'}^\omega)_{x,\ell}^\omega - \delta \leq \sigma_{\ell'}^\omega \leq (\sigma_{\ell'}^\omega)_{x,\ell}^\omega + \delta\big] \geq \frac{1}{4} - \frac{\delta}{2}.\label{eq:alter2-ii}
  \end{empheq}
  \end{subequations}
We will use the observation to prove the following proposition, which is analogous to \cite[Proposition 1.3]{nitzschner2017solidification} and \cite[Proposition 4.5]{CN2021disconnection}, and shows that a random walk has a non-degenerate probability of entering the set where the local density $\sigma_{\ell'}^\omega$ takes values close to its average over a ball before exiting the latter.
\medskip

In addition to the condition concerning regular volumes that appeared in Lemma~\ref{lem:sigma-regular}, we will impose further conditions on $\ell$ in the next proposition for the regularity of random walks on the percolation cluster. These, together with other technical conditions, are encapsulated in the minimal scale $\ell_{\min}(\omega, \alpha, \delta)$ (see \eqref{eq:l_min}). A similar notion of a minimal scale for such regularity properties was previously introduced in \cite[Proposition 4.5]{CN2021disconnection}; however, our definition additionally depends on the particular realization $\omega$ and the degree of volume irregularity $\alpha$, reflecting the inherent degeneracies present in the percolation cluster.
Recall the scales $\Rhk(\omega, \alpha)$ and $\Rkhk(\omega, \alpha, \vartheta)$ defined in Proposition \ref{prop:uni-HK}, the set of regular configurations, $\Omega_{\mathrm{reg}}^{\alpha, \vartheta}$, defined in \eqref{eq:Omega_reg}, and the stopping time $\tau_r$ defined in \eqref{eq:tau_r}.

\begin{prop}[$d\geq 3$, \refA, \eqref{eq:assum-4-1}]\label{prop:positive-probability-2}
  Let $\alpha\in (0,\frac{1}{3})$.
  For $0<\delta \leq (\sigma_{\ell'}^\omega)_{x,\ell}^\omega \wedge (1-(\sigma_{\ell'}^\omega)_{x,\ell}^\omega)\wedge \frac{1}{4}$, 
  there exists a minimal scale $\ell_{\min}(\omega,\delta,\alpha)$ (defined in \eqref{eq:l_min}), a positive constant $\delta'(\delta)\in (0,1)$ (defined in \eqref{eq:delta'}) such that for scales $\ell>\ell'\geq \ell_{\min}(\omega,\delta,\alpha) \in \bbN$ and, $\omega \in \Omega_{\mathrm{reg}}^{\alpha, \delta'(\delta)}$, we have 
  \begin{equation}\label{eq:nondeg-prob}
  P_x^\omega \left[H_{ \{ \sigma_{\ell'}^\omega \in[(\sigma_{\ell'}^\omega)_{x,\ell}^\omega-\delta,(\sigma_{\ell'}^\omega)_{x,\ell}^\omega+\delta]\}} < \tau_{2^\ell}\right] \geq c_1(\delta),
  \end{equation}
  for all $x\in \cS_\infty \cap B(0,r_{\alpha, 2^{\ell'}})$, where here and in the following we use the abbreviation $\{\sigma_{\ell'}^\omega\in[(\sigma_{\ell'}^\omega)_{x,\ell}^\omega-\delta,(\sigma_{\ell'}^\omega)_{x,\ell}^\omega+\delta]\}$ for $\{y\in \bbZ^d \,:\, \sigma_{\ell'}^\omega(y)\in[(\sigma_{\ell'}^\omega)_{x,\ell}^\omega-\delta,(\sigma_{\ell'}^\omega)_{x,\ell}^\omega+\delta]\}$.
\end{prop}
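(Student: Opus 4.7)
The plan is to apply the dichotomy \eqref{eq:alter2} with $\mathrm{P} = \mu_{x,\ell}^\omega$ and $Y = \sigma_{\ell'}^\omega$, and to treat each of the two cases by combining the killed heat kernel lower bound of Proposition~\ref{prop:uni-HK}(ii) with the Lipschitz continuity of $\sigma_{\ell'}^\omega$ from Lemma~\ref{lem:sigma-regular}(i). I would take $\delta'(\delta) \in (0,1)$ of order $\delta$, since it plays the role of the interior parameter $\vartheta$ in Proposition~\ref{prop:uni-HK}(ii), and define $\ell_{\min}(\omega, \delta, \alpha)$ as the smallest integer $\ell_0$ with $2^{\ell_0}$ exceeding each of $\Rden(\omega, \alpha)$, $\Rhk(\omega, \alpha)$, $\Rkhk(\omega, \alpha, \delta'(\delta))$, and $3/(\delta\, \eta(u))$; the last condition ensures that the Lipschitz constant $c_{\mathrm{lip}} = 6 \cdot 2^{-\ell'}/\eta(u)$ from \eqref{eq:Lip-cts} is strictly less than $2\delta$.

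In the concentrated case \eqref{eq:alter2-ii}, the set $E \ldef \{y \in \cS_\infty : \sigma_{\ell'}^\omega(y) \in [(\sigma_{\ell'}^\omega)_{x,\ell}^\omega - \delta, (\sigma_{\ell'}^\omega)_{x,\ell}^\omega + \delta]\}$ has $\mu_{x,\ell}^\omega$-mass at least $\tfrac14 - \tfrac\delta2 \geq \tfrac18$, so the volume concentration \eqref{eq:volume-concentration} at scale $\ell$ gives $|E \cap B(x, 2^\ell)| \geq c\, 2^{\ell d}$. Taking $\vartheta = \vartheta(\delta)$ small enough that the annular shell $B(x, 2^\ell) \setminus B(x, (1-\vartheta) 2^\ell)$ (of volume $O(\vartheta \cdot 2^{\ell d})$) carries only a fraction of the above mass, I retain $|E \cap B(x, (1-\vartheta) 2^\ell)| \geq c(\delta)\, 2^{\ell d}$. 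Then, for any $t$ in the admissible window $[c_{\text{khk}2}(\vartheta) 2^{2\ell}, 2^{2\ell}]$ of Proposition~\ref{prop:uni-HK}(ii) (applied, modulo a harmless shift from $2^\ell$ to $2^\ell - 1$ so that the killed exit time matches $\tau_{2^\ell}$), and using $\mu_y \geq 1$ on $\cS_\infty$, I would obtain
\begin{equation*}
P_x^\omega[H_E < \tau_{2^\ell}] \; \geq \; \sum_{y \, \in \, E \, \cap \, B(x, (1-\vartheta) 2^\ell)} q^\omega_{t, B(x, 2^\ell) \, \cap \, \cS_\infty}(x, y) \, \mu_y \; \geq \; c_1(\delta).
\end{equation*}
In the spread case \eqref{eq:alter2-i}, the same argument applied separately to the sets $E^\pm \ldef \{y \in \cS_\infty : \pm(\sigma_{\ell'}^\omega(y) - (\sigma_{\ell'}^\omega)_{x,\ell}^\omega) > \delta\}$, each of $\mu_{x,\ell}^\omega$-mass at least $\delta/2$, yields $P_x^\omega[H_{E^\pm} < \tau_{2^\ell}] \geq c(\delta)$. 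Since consecutive positions of the walk are neighbors in $\cS_\infty$, Lemma~\ref{lem:sigma-regular}(i) bounds the change in $\sigma_{\ell'}^\omega$ between them by $c_{\mathrm{lip}} < 2\delta$, so the walk cannot jump directly from $E^+$ to $E^-$ (whose values differ by at least $2\delta$) without crossing $E$. Therefore, if $x \in E^+$, then $H_E \leq H_{E^-}$ on $\{H_{E^-} < \tau_{2^\ell}\}$, giving $P_x^\omega[H_E < \tau_{2^\ell}] \geq c(\delta)$; the case $x \in E^-$ is symmetric, and $x \in E$ is trivial.

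The main obstacle, and the key departure from the uniformly elliptic setup in \cite{CN2021disconnection}, is that $\Rden$, $\Rhk$, $\Rkhk$ are $\omega$-dependent random scales capturing the local degeneracies of $\cS_\infty$; the resulting $\ell_{\min}(\omega, \delta, \alpha)$ is therefore random, but Proposition~\ref{prop:uni-den} and Lemma~\ref{lem:HK} ensure its finiteness on the full-$\bbP^u$-measure set $\Omega_{\mathrm{reg}}^{\alpha, \delta'(\delta)}$, which is all the proposition needs. A secondary technicality is that Lemma~\ref{lem:sigma-regular}(i) requires the walk trajectory during $[0, \tau_{2^\ell})$ to remain in the regularity ball $B(0, r_{\alpha, 2^{\ell'}})$; since $r_{\alpha, 2^{\ell'}}$ is superpolynomial in $2^{\ell'}$ and $\ell$ will be taken only mildly larger than $\ell'$ in the multi-scale construction of Section~\ref{sec:solidification}, this can be readily absorbed into $\ell_{\min}$.
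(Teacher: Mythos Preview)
Your proposal is correct and follows essentially the same approach as the paper: the dichotomy \eqref{eq:alter2}, the shell-stripping via a $\delta'$ of order $\delta$ to reduce to the interior $B(x,(1-\delta')2^\ell)$, the killed heat kernel lower bound of Proposition~\ref{prop:uni-HK}(ii), and the Lipschitz continuity of $\sigma_{\ell'}^\omega$ to force a crossing of the target level set. The only noteworthy variation is in case \eqref{eq:alter2-i}: the paper forces the walk to visit \emph{both} $E^+$ and $E^-$ in succession (costing $C(\delta)^2$ via the strong Markov property) and then invokes Lipschitz continuity along that trajectory, whereas you case on whether $x\in E^+$, $x\in E^-$, or $x\in E$ and only need to hit the opposite side once---this is a harmless simplification yielding the same conclusion.
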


\begin{proof}
Let $x\in \cS_\infty \cap B(0,r_{\alpha, 2^{\ell'}})$ and $0<\delta\leq (\sigma_{\ell'}^\omega)_{x,\ell}^\omega \wedge (1-(\sigma_{\ell'}^\omega)_{x,\ell}^\omega)\wedge \frac{1}{4}$. By the definition of $\mu_{x,\ell}^\omega$ (see \eqref{eq:local-density-measure}), we can take \begin{equation}\label{eq:delta'}
  \delta'(\delta) \stackrel{\mathrm{def}}{=} \frac{\eta(u)\delta}{6d},
\end{equation}
so that
\begin{equation}\label{eq:con-deltap}
  \mu_{x,\ell}^\omega [\{y\in \bbZ^d \, : \,  |y-x|_\infty > 2^\ell(1-\delta')\}] \leq \frac{|B(x,2^\ell)\setminus B(x,2^{\ell}(1-\delta'))|}{(1-\alpha)\eta(u)(2^{\ell+1}+1)^d} \leq \frac{3d\delta'}{2\eta(u)} = \frac{\delta}{4}
\end{equation}
holds for all $2^\ell\geq \Rden(\omega,\alpha)$.

We will apply Proposition \ref{prop:uni-HK} (with $\vartheta = \delta'(\delta)$),  Proposition \ref{prop:uni-den}, and Lemma \ref{lem:sigma-regular}. To this end, we define 
\begin{equation}\label{eq:l_min}
  \begin{aligned}
    \ell_{\min} (\omega,\delta,\alpha) & \stackrel{\mathrm{def}}{=}  \big\lceil \log_2 \Rhk(\omega,\alpha) \big\rceil \vee \big\lceil \log_2 \Rkhk(\omega,\alpha, \delta'(\delta)) \big\rceil \vee \big\lceil \log_2 \Rden(\omega, \alpha) \big\rceil \\
& 
\vee \min \{\ell\in \bbN: 2^{-\ell}\leq \tfrac{\delta}{8} \wedge \tfrac{\eta(u)}{6}\delta\}.
  \end{aligned}
\end{equation}

We now let $\ell>\ell'\geq \ell_{\min}(\omega,\delta,\alpha)$ and show that \eqref{eq:nondeg-prob} holds. Assume we are in the situation where~\eqref{eq:alter2-i} is fulfilled. 
By \eqref{eq:con-deltap}, it follows that
\begin{empheq}[left=\empheqlbrace]{align}\label{eq:size-LB}
\mu_{x,\ell}^\omega[\{\sigma_{\ell'}^\omega>(\sigma_{\ell'}^\omega)_{x,\ell}^\omega+\delta\}\cap B(x,2^\ell(1-\delta'))] \geq \frac{\delta}{4},\\
\mu_{x,\ell}^\omega[\{\sigma_{\ell'}^\omega<(\sigma_{\ell'}^\omega)_{x,\ell}^\omega-\delta\}\cap B(x,2^\ell(1-\delta'))] \geq \frac{\delta}{4}.
\end{empheq}
We recall the notation related to random walks introduced at the start of Subsection \ref{subsec:random-walks}, up to~\eqref{eq:KilledHeatKernel}. We have
\begin{align*}
P_x^\omega &\left[
H_{\{\sigma_{\ell'}^\omega>(\sigma_{\ell'}^\omega)_{x,\ell}^\omega+\delta\}\cap B(x,2^\ell(1-\delta'))} < T_{B(x,2^\ell)}
\right] \\
& \geq  ~  P_x^\omega \left[ 
X_{4^\ell} \in \{\sigma_{\ell'}^\omega>(\sigma_{\ell'}^\omega)_{x,\ell}^\omega+\delta\}\cap B(x,2^\ell(1-\delta')), 4^\ell < T_{B(x,2^\ell)}
\right]\\
& =  \sum_{z\in \{\sigma_{\ell'}^\omega>(\sigma_{\ell'}^\omega)_{x,\ell}^\omega+\delta\}\cap B(x,2^\ell(1-\delta'))\cap \cS_\infty} \mu_z q_{4^\ell,B(x,2^\ell)}^\omega(x,z)\\
& \stackrel{\eqref{eq:killed-heat-kernel},\mu_z\geq 1}{\geq}  |\{\sigma_{\ell'}^\omega>(\sigma_{\ell'}^\omega)_{x,\ell}^\omega+\delta\}\cap B(x,2^\ell(1-\delta'))\cap \cS_\infty|\cdot c_{\text{khk}1}(\delta') 2^{-\ell d}\\
& \stackrel{\eqref{eq:size-LB}}{\geq}  |B(x,2^\ell)\cap \cS_\infty| \cdot \frac{\delta}{4}\cdot c_{\text{khk}1}(\delta') 2^{-\ell d}\\
& \stackrel{\eqref{eq:volume-concentration}}{\geq} (1-\alpha) \eta(u)\cdot (2^{\ell+1}+1)^d \cdot \frac{\delta}{4}\cdot c_{\text{khk}1}(\delta') 2^{-\ell d}  \stackrel{\alpha < \frac{1}{3}}{\geq} C(\delta),
\end{align*}
where the application of \eqref{eq:killed-heat-kernel} is possible since $x\in \cS_\infty \cap B(0,r_{\alpha, 2^{\ell'}})\subseteq \cS_\infty\cap B(0,r_{\alpha, 2^{\ell}})$ and $z\in \cS_\infty \cap B(x,(1-\delta')2^\ell)$, and the use of \eqref{eq:volume-concentration} is valid since $x\in \cS_\infty \cap B(0,r_{\alpha, 2^{\ell'}})\subseteq \cS_\infty\cap B(0,r_{\alpha, 2^{\ell}})$.
Similarly, for all $y\in B(x,2^\ell(1-\delta'))\cap \cS_\infty$, we also have
\[
P_y^\omega \big[ 
H_{\{\sigma_{\ell'}^\omega<(\sigma_{\ell'}^\omega)_{x,\ell}^\omega-\delta\}\cap B(x,2^\ell(1-\delta'))} < T_{B(x,2^\ell)}
\big] \geq C(\delta).
\]
It then follows from the strong Markov property that the random walk starting from $x$ enters $\{\sigma_{\ell'}^\omega>(\sigma_{\ell'}^\omega)_{x,\ell}^\omega+\delta\}$ and then $\{\sigma_{\ell'}^\omega<(\sigma_{\ell'}^\omega)_{x,\ell}^\omega-\delta\}$ before exiting $B(x,2^\ell)$ with probability at least $C(\delta)^2$. Note that the trajectory of this walk must lie in $B(x,2^\ell)\cap\cS_\infty$, so by the Lipschitz continuity (using $\ell'\geq \ell_{\min}$) of $\sigma_{\ell'}^\omega$ in the subgraph $\cS_\infty\cap B(0,r_{\alpha, 2^\ell})$ with Lipschitz constant smaller than $\delta$, it must enter $\{(\sigma_{\ell'}^\omega)_{x,\ell}^\omega-\delta<\sigma_{\ell'}^\omega<(\sigma_{\ell'}^\omega)_{x,\ell}^\omega+\delta\}$ before exiting $B(x,2^\ell)$. \smallskip

It remains to consider the situation \eqref{eq:alter2-ii}. Note that 
\[
  \mu_{x,\ell}^\omega\big[(\sigma_{\ell'}^\omega)_{x,\ell}^\omega-\delta\leq \sigma_{\ell'}^\omega \leq (\sigma_{\ell'}^\omega)_{x,\ell}^\omega+\delta\big]\geq \frac{1}{4}-\frac{\delta}{2} \geq \frac{1}{8}.
\]
We can argue using the killed heat kernel bound again to show 
\[
P_x^\omega 
\big[ H_{\{\sigma_{\ell'}^\omega\in[(\sigma_{\ell'}^\omega)_{x,\ell}^\omega-\delta,(\sigma_{\ell'}^\omega)_{x,\ell}^\omega+\delta] \}} < T_{B(x,2^\ell)}\big] \geq C(\delta)
.
\]
This completes the proof.
\end{proof}

Based on the previous propositions, we are now in a position to introduce ``properly separated'' scales $2^{\ell_0}>\ldots>2^{\ell_J}$ that will enter the definition of the resonance sets in the next subsection. Our goal is to show that starting from a point $x$ of well-balanced relative volume (see \eqref{eq:near-1/2}), there is positive probability for a random walk to enter the region where all local densities $\widetilde\sigma_{\ell_j}^\omega(\,\cdot\,)$, $0\leq j\leq J$, lie in some interval $[\widetilde{\alpha},1-\widetilde{\alpha}]$ (with $\widetilde{\alpha}$ a dimension-dependent constant, see~\eqref{eq:happen2-C}), before travelling to a point at sup-distance $\frac{3}{2}\cdot 2^{\ell_0}$ from $x$. This crucially enters the definition of the resonance set in the next subsection. \medskip

To this end, we fix an integer $J\geq 1$ and define
\begin{equation}\label{eq:alpha_J}
\alpha(J) \stackrel{\mathrm{def}}{=} \frac{1}{2} \left[1-\frac{2}{(\frac{10}{9})^{1/J}+1}\right]
\quad \mbox{and} \quad 
\delta(J)\stackrel{\mathrm{def}}{=} \frac{\alpha(J)}{4} = \frac{1}{8}\left[1-\frac{2}{(\frac{10}{9})^{1/J}+1}\right].
\end{equation}
We intentionally reuse the symbols $\alpha$ and $\delta$ to denote quantities that now depend explicitly on the integer $J$, as defined above. This dependence will be fixed throughout the remainder of the article. The reason for these specific choices will become clear in~\eqref{eq:alpha}.
We say the scales $\ell_0>\ldots>\ell_J$ are \emph{properly separated} if for all $j\in \{0,\ldots,J-1\}$, it holds that
\begin{subequations}\label{eq:WellS-1}
  \begin{empheq}[left=\empheqlbrace]{align}
    \ell_j &\geq \ell_{j+1} + L(J), \label{eq:WellS-1-i} \\
    2^{\ell_j} &< r_{\alpha(J), 2^{\ell_{j+1}}} - 2^{\ell_{j+1}}, \label{eq:WellS-1-ii}
  \end{empheq}
  \end{subequations}
where 
\begin{equation}\label{eq:WellS-2}
  \mbox{$L(J)$ is the smallest integer $L\geq 5$ such that $c_0 \cdot 2^{-L}\leq \delta(J)$}
\end{equation}
(recall $c_0$ in \eqref{eq:c0'cont}). As a consequence of \eqref{eq:WellS-1-i}, we have 
\begin{equation}\label{eq:cons-WellS}
  c_0 2^{\ell_{j+1}-\ell_j} \leq c_0 2^{-L(J)} \leq \delta(J).
\end{equation}
We will also consider a sequence of intervals for $j=0,\ldots,J$, given by
\begin{equation}\label{eq:I_j}
I_j \stackrel{\mathrm{def}}{=} \left[ \frac{3}{4}\Big(\frac{1-\alpha(J)}{1+\alpha(J)}\Big)^j - \frac{1+\alpha(J)}{4}, ~ 
\frac{3}{4}\Big(\frac{1+\alpha(J)}{1-\alpha(J)}\Big)^j - \frac{1-\alpha(J)}{4}\right],
\end{equation}
and introduce the stopping times 
\begin{equation}\label{eq:stopping-times}
  \gamma_0 = H_{\{\sigma_{\ell_0}^\omega\in I_0 \}},\quad \gamma_{j+1} = \gamma_j + H_{\{\sigma_{\ell_{j+1}}^\omega\in I_{j+1} \}}\circ \theta_{\gamma_j}.
\end{equation}
We recall the function $\delta'(\,\cdot\,)$ and the minimal scale $\ell_{\min}(\omega,\delta,\alpha)$ from Proposition~\ref{prop:positive-probability-2}, given by~\eqref{eq:delta'} and~\eqref{eq:l_min}, respectively.
\begin{prop}[$d\geq 3$, \refA, \eqref{eq:assum-4-1}]\label{prop:positive-probability-adv}
  Let $J\geq 1$, $\omega \in \Omega_{\mathrm{reg}}^{\alpha(J), \delta'(\delta(J))}$ be fixed.
  Assume $(\ell_j)_{j=0}^J$ to be properly separated in the sense of \eqref{eq:WellS-1}, and furthermore that $\tfrac{2}{3}r_{\alpha(J), 2^{\ell_J}} > 2^{\ell_0}>\ldots>2^{\ell_J} \geq 2^{\ell_{\min}(\omega,\delta(J),\alpha(J))}$.
  Define
  \begin{equation}
  \cC \stackrel{\mathrm{def}}{=} \{\gamma_0=0\} \cap \bigcap_{0\leq j <J} \{\gamma_{j+1}< \gamma_{j} + \tau_{2^{\ell_j}}\circ \theta_{\gamma_j}\}.
  \end{equation}
  Then $P_x^\omega [\cC]\geq c_2(J)$ holds for $x\in \cS_\infty \cap B(0,r_{\alpha(J), 2^{\ell_J}}-\frac{3}{2}2^{\ell_0})$ satisfying 
  \begin{equation}\label{eq:near-1/2}
    \sigma_{\ell_0}^\omega(x) \in \left[\frac{1}{2} - \frac{1}{2^{\ell_{\min}}},~\frac{1}{2} +\frac{1}{2^{\ell_{\min}}}\right], \quad \ell_{\min}=\ell_{\min}(\omega, \delta(J),\alpha(J)).
  \end{equation}
  Moreover, on the set of trajectories constituting the event $\cC$, one has  
  \begin{equation}\label{eq:happen1-C}
  \sup\{|X_s-X_{\gamma_j}|_\infty \, : \,\gamma_j \leq s \leq \gamma_J\} \leq \frac{3}{2}\cdot 2^{\ell_j}\quad \mbox{for $j=0,\ldots,J$,}
  \end{equation}
  and 
  \begin{equation}\label{eq:happen2-C}
  \widetilde{\sigma}^\omega_{\ell_j} (X_{\gamma_J}) \in [\widetilde{\alpha},1-\widetilde{\alpha}]\quad \mbox{for $j=0,\ldots,J$ with $\widetilde\alpha=\dfrac{3}{10}4^{-d}$}.
  \end{equation}
\end{prop}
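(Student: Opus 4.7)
The plan is to iterate Proposition~\ref{prop:positive-probability-2} across the $J$ scales, combining the one-step hitting estimate with the mean-versus-pointwise bound of Lemma~\ref{lem:sigma-regular}(ii) to ensure that the density condition $\sigma_{\ell_j}^\omega(X_{\gamma_j}) \in I_j$ propagates from one scale to the next. The base case is immediate: from \eqref{eq:near-1/2} and $2^{-\ell_{\min}} \leq \delta(J) = \alpha(J)/4$ (by \eqref{eq:l_min}) I obtain $\sigma_{\ell_0}^\omega(x) \in [\tfrac{1}{2} - \tfrac{\alpha(J)}{4},\, \tfrac{1}{2} + \tfrac{\alpha(J)}{4}] = I_0$, so $\gamma_0 = 0$.

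For the inductive step, assume at $\gamma_j$ the walk sits at $y = X_{\gamma_j}$ in a suitable sup-ball with $\sigma_{\ell_j}^\omega(y) \in I_j$. Applying Lemma~\ref{lem:sigma-regular}(ii) with $\ell = \ell_j$, $\ell' = \ell_{j+1}$ and using $c_0 \cdot 2^{\ell_{j+1} - \ell_j} \leq \delta(J)$ from \eqref{eq:cons-WellS},
\[
(\sigma_{\ell_{j+1}}^\omega)_{y,\ell_j}^\omega \in \bigl[\tfrac{1-\alpha(J)}{1+\alpha(J)} \sigma_{\ell_j}^\omega(y) - \delta(J),\ \tfrac{1+\alpha(J)}{1-\alpha(J)} \sigma_{\ell_j}^\omega(y) + \delta(J)\bigr].
\]
Then Proposition~\ref{prop:positive-probability-2} with $\delta = \delta(J)$ guarantees probability at least $c_1(\delta(J))$ that the walk enters $\{|\sigma_{\ell_{j+1}}^\omega - (\sigma_{\ell_{j+1}}^\omega)_{y,\ell_j}^\omega| \leq \delta(J)\}$ before exiting $B(y, 2^{\ell_j})$, so at that entrance point $\sigma_{\ell_{j+1}}^\omega \in \bigl[\tfrac{1-\alpha(J)}{1+\alpha(J)} \sigma_{\ell_j}^\omega(y) - 2\delta(J),\, \tfrac{1+\alpha(J)}{1-\alpha(J)} \sigma_{\ell_j}^\omega(y) + 2\delta(J)\bigr]$. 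Given $\sigma_{\ell_j}^\omega(y) \in I_j$, a direct algebraic check exploiting the telescoping form of $I_j$ and the identity $4\delta(J) = \alpha(J)$ shows that the latter interval is contained in $I_{j+1}$, in fact with matching endpoints.

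To iterate via the strong Markov property, I must verify the geometric prerequisites: on the partially-built event $\bigcap_{i<j}\{\gamma_{i+1} < \gamma_i + \tau_{2^{\ell_i}} \circ \theta_{\gamma_i}\}$, the estimate $\sup_{\gamma_j \leq s \leq \gamma_J}|X_s - X_{\gamma_j}|_\infty \leq \sum_{k \geq j} 2^{\ell_k} \leq 2^{\ell_j}/(1 - 2^{-L(J)}) \leq \tfrac{3}{2} \cdot 2^{\ell_j}$ (using $L(J) \geq 5$) both establishes \eqref{eq:happen1-C} and, combined with the hypothesis $x \in B(0, r_{\alpha(J), 2^{\ell_J}} - \tfrac{3}{2} \cdot 2^{\ell_0})$, monotonicity of $r_{\alpha, \cdot}$ in its second argument, and \eqref{eq:WellS-1-ii}, places $X_{\gamma_j}$ in the range required for Lemma~\ref{lem:sigma-regular}(ii) and Proposition~\ref{prop:positive-probability-2} to apply at step $j$. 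Chaining the one-step estimates then yields $P_x^\omega[\cC] \geq c_1(\delta(J))^J =: c_2(J)$.

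For \eqref{eq:happen2-C}, the inclusion $B(X_{\gamma_j}, 2^{\ell_j}) \subseteq B(X_{\gamma_J}, 4 \cdot 2^{\ell_j})$ implied by \eqref{eq:happen1-C}, together with the volume concentration of Proposition~\ref{prop:uni-den}, gives $\widetilde{\sigma}_{\ell_j}^\omega(X_{\gamma_J}) \geq \sigma_{\ell_j}^\omega(X_{\gamma_j}) \cdot \tfrac{1-\alpha(J)}{1+\alpha(J)} \cdot 4^{-d}$ and an analogous bound for $1 - \widetilde{\sigma}_{\ell_j}^\omega(X_{\gamma_J})$; it then remains to bound the endpoints of $I_J$ uniformly in $J$. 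From the key identity $\bigl(\tfrac{1+2\alpha(J)}{1-2\alpha(J)}\bigr)^J = 10/9$ built into \eqref{eq:alpha_J} and the elementary inequality $\tfrac{1+\alpha}{1-\alpha} < \tfrac{1+2\alpha}{1-2\alpha}$ for $\alpha \in (0, 1/2)$, I infer $\bigl(\tfrac{1 \pm \alpha(J)}{1 \mp \alpha(J)}\bigr)^J \in (9/10, 10/9)$, hence the endpoints of $I_J$ are bounded away from $\{0, 1\}$ uniformly in $J$, delivering $\widetilde{\sigma}_{\ell_j}^\omega(X_{\gamma_J}) \in [\widetilde{\alpha}, 1-\widetilde{\alpha}]$ with $\widetilde{\alpha} = \tfrac{3}{10} \cdot 4^{-d}$. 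The hardest step, I expect, will be the elementary but tight algebraic verification of the nested inclusion $I_j \mapsto I_{j+1}$, whose exactness at the endpoints is what pins down \eqref{eq:alpha_J} as the uniquely correct choice balancing the requirement that $(I_j)$ shrink slowly enough for persistence across $J$ scales and broadly enough to remain bounded away from $\{0,1\}$ uniformly in $J$.
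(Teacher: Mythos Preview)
Your proposal is correct and follows essentially the same route as the paper's proof: induction on $j$ using the strong Markov property, with Lemma~\ref{lem:sigma-regular}(ii) to control the mean $(\sigma_{\ell_{j+1}}^\omega)_{X_{\gamma_j},\ell_j}^\omega$ in terms of $\sigma_{\ell_j}^\omega(X_{\gamma_j})\in I_j$, Proposition~\ref{prop:positive-probability-2} for the one-step hitting bound, and the exact telescoping identity $\bigl[\tfrac{1-\alpha}{1+\alpha}L_j-2\delta,\tfrac{1+\alpha}{1-\alpha}R_j+2\delta\bigr]=I_{j+1}$ (with $4\delta=\alpha$) to propagate the inclusion. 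Your identification of the exact identity $\bigl(\tfrac{1+2\alpha(J)}{1-2\alpha(J)}\bigr)^J=10/9$ underlying \eqref{eq:alpha_J} is a clean way to obtain the paper's observation \eqref{eq:alpha}, and your derivations of \eqref{eq:happen1-C} via the geometric series and of \eqref{eq:happen2-C} via the inclusion $B(X_{\gamma_j},2^{\ell_j})\subseteq B(X_{\gamma_J},4\cdot 2^{\ell_j})$ and volume concentration match the paper's argument.
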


\begin{proof}
We first record the following observations on the parameters $\alpha(J)$ and $\delta(J)$ defined in~\eqref{eq:alpha_J}, and the intervals $I_j$ from~\eqref{eq:I_j}. 
\begin{enumerate}[label=(\roman*)]
\item As $\alpha(J)$ is a decreasing function of $J$, $\alpha(J) \leq \alpha(1)=\frac{1}{38}(<\frac{1}{3})$ and $\delta \leq \frac{1}{38\cdot 4} (<\frac{1}{4})$. 

\item It is immediate from the expression of $\alpha(J)$ that
$
\alpha(J) < 1-\frac{2}{(\frac{10}{9})^{1/J}+1},
$
which implies 
\begin{equation}\label{eq:alpha}
  \left(\frac{1+\alpha(J)}{1-\alpha(J)}\right)^J = \left(\frac{2}{1-\alpha(J)} -1\right)^J < \frac{10}{9}.
\end{equation}

\item  Denote the left and right endpoint of $I_j$ by $L_j$ and $R_j$ respectively, for $j\in \{0,\ldots,J\}$. As $L_j$ is decreasing and $R_j$ is increasing in $j$, using observation (ii) and $\alpha(J) \leq \frac{1}{38}$ in observation (i), we know that for $j\in \{0,\ldots,J\}$
\begin{equation}\label{eq:I_low}
  L_j \geq L_J \geq \frac{3}{4}\cdot \frac{9}{10} - \frac{1}{4} - \frac{\alpha(J)}{4} \geq \frac{159}{380} \geq \frac{1}{3}
\end{equation}
and 
\begin{equation}
  R_j \leq R_J \leq \frac{3}{4}\cdot \frac{10}{9} - \frac{1}{4} + \frac{\alpha(J)}{4} \leq \frac{269}{456} \leq \frac{2}{3},
\end{equation}
which shows $I_j \subseteq [\frac{1}{3},\frac{2}{3}]$.
\end{enumerate}

\medskip

Consider $x\in \cS_\infty \cap B(0,r_{\alpha(J), 2^{\ell_J}}-\frac{3}{2}2^{\ell_0} )$ satisfying \eqref{eq:near-1/2}.
We will repeatedly apply Proposition \ref{prop:positive-probability-2} and show by induction on $j\in \{0,\ldots,J\}$ that 
\begin{equation}\label{eq:induction-C_j}
  \begin{split}
    &P_x^\omega[\cC_j] \geq c_1(\delta(J))^j \mbox{ with}\\
    & \cC_j \stackrel{\mathrm{def}}{=} \{\gamma_0=0, \gamma_1< \gamma_0+ \tau_{2^{\ell_0}}\circ \theta_{\gamma_0},\ldots, \gamma_j< \gamma_{j-1}+ \tau_{2^{\ell_{j-1}}}\circ \theta_{\gamma_{j-1}}\}
  \end{split}
\end{equation}
(note that $\cC=\cC_J$), which will conclude the first part.

When $j=0$, $I_0=[\frac{1}{2}-\delta(J), ~ \frac{1}{2}+\delta(J)]$. Since $2^{-\ell_{\min}(\omega,\delta(J),\alpha(J))}\leq \frac{\delta(J)}{8}<\delta(J)$, we find that $P_x^\omega[\gamma_0=0]=1$. 
Assume now that \eqref{eq:induction-C_j} is satisfied for some $0\leq j <J$. Firstly, we want to investigate the range of $X_{\gamma_j}$ to make sure Lemma \ref{lem:sigma-regular}(ii) is applicable with $X_{\gamma_j}$ in place of $x$ therein, for all $j\in\{1,\ldots,J\}$. On $\cC_j$, by the definition of $\gamma_j$, we have
\begin{equation}\label{eq:range-Xgamma-1}
  X_{\gamma_j} \in \cS_\infty \cap B(0, r_{\alpha(J), 2^{\ell_J}}-\frac{3}{2}2^{\ell_0} + 2^{\ell_0}+\ldots+2^{\ell_{j-1}}). 
\end{equation}
Note that by \eqref{eq:WellS-1-i}, 
\begin{equation}\label{eq:calcu-stopping}
  2^{\ell_0}+\ldots+ 2^{\ell_j}+2^{\ell_{j+1}}\leq 2^{\ell_0} \sum_{m=0}^{j+1} 2^{-mL(J)} \leq \frac{3}{2} 2^{\ell_0}.
\end{equation}
It then follows from \eqref{eq:range-Xgamma-1} that 
\begin{equation}
  X_{\gamma_j} \in \cS_\infty \cap  B(0, r_{\alpha(J), 2^{\ell_{j+1}}}-2^{\ell_j}-2^{\ell_{j+1}}).
\end{equation}
Together with \eqref{eq:WellS-1-ii}, we know that Lemma \ref{lem:sigma-regular}(ii) is applicable with $X_{\gamma_j}$ for all $j\in\{1,\ldots,J\}$. 
Denote $(\sigma_{\ell_{j+1}}^\omega)_{X_{\gamma_j},\ell_j}^\omega$ by $(\beta_{j+1}^\omega)'$. On $\cC_j$, by Lemma \ref{lem:sigma-regular}(ii) and \eqref{eq:cons-WellS}, we have
\[
(\beta_{j+1}^\omega)' \in \left[\frac{1-\alpha}{1+\alpha} \sigma_{\ell_j}^\omega(X_{\gamma_j})-\delta, ~ \frac{1+\alpha}{1-\alpha} \sigma_{\ell_j}^\omega(X_{\gamma_j})+\delta\right].
\]
As $\sigma^\omega_{\ell_j}(X_{\gamma_j})\in I_j$ by the definition of $\gamma_j$, it is clear by our choice of $\delta=\frac{\alpha}{4}$ that
\[
[(\beta_{j+1}^\omega)'-\delta, (\beta_{j+1}^\omega)'+\delta] \subseteq
\left[\frac{1-\alpha}{1+\alpha}L_j-2\delta, ~ \frac{1+\alpha}{1-\alpha}R_j+2\delta\right] = I_{j+1}
\]
(recall that $L_j$ and $R_j$ are the left and right endpoint of $I_j$ respectively).
It then holds on $\cC_j$ that 
\begin{equation}\label{eq:hit-compare}
  H_{\left\{ \sigma_{\ell_{j+1}}^\omega\in \left[(\beta_{j+1}^\omega)'-\delta,(\beta_{j+1}^\omega)'+\delta\right] \right\}} \geq H_{\{\sigma_{\ell_{j+1}}^\omega\in I_{j+1} \}}.
\end{equation}
We can now apply the strong Markov property at time $\gamma_j$ and find that 
\begin{align*}
  &P_x^\omega \left[\cC_{j+1}\right]  =P_x^\omega \left[\cC_j \cap\left\{\gamma_{j+1}< \gamma_{j}+ \tau_{2^{\ell_{j}}}\circ \theta_{\gamma_{j}}\right\}\right] =E_x^\omega \left[\cC_j, P_{X_{\gamma_j}}\left[H_{\{\sigma^\omega_{\ell_{j+1}} \in I_{j+1}\}}<\tau_{2^{\ell_j}}\right]\right]\\
  & \stackrel{\eqref{eq:hit-compare}}{\geq} E_x^\omega \left[\cC_j, P_{X_{\gamma_j}}\left[H_{\{ \sigma_{\ell_{j+1}}^\omega\in \left[(\beta_{j+1}^\omega)'-\delta,(\beta_{j+1}^\omega)'+\delta\right] \}}<\tau_{2^{\ell_j}}\right]\right] \underset{\text{induction}}{\stackrel{\text{Prop.\ref{prop:positive-probability-2}}}{\geq}} c_1(\delta(J))^{j+1} .
\end{align*}
The first part of the proposition is concluded. 

\medskip

We now turn to~\eqref{eq:happen1-C} and~\eqref{eq:happen2-C}. First, \eqref{eq:happen1-C} follows by the definition of stopping times \eqref{eq:stopping-times} and a calculation using triangle inequalities similar to \eqref{eq:calcu-stopping}, and we refer to \cite[(1.40)]{nitzschner2017solidification} for details.
For the lower bound in \eqref{eq:happen2-C}, 
\begin{align*}
  \widetilde{\sigma}_{\ell_j}(X_{\gamma_J}) &= \frac{|B(X_{\gamma_J},4\cdot 2^{\ell_j}) \cap U_1|}{|B(X_{\gamma_J},4\cdot 2^{\ell_j})\cap \cS_\infty|} \geq \frac{|B(X_{\gamma_j}, 2^{\ell_j}) \cap U_1|}{|B(X_{\gamma_j}, 2^{\ell_j})\cap \cS_\infty|} \cdot \frac{|B(X_{\gamma_j}, 2^{\ell_j})\cap \cS_\infty|}{|B(X_{\gamma_J},4\cdot 2^{\ell_j})\cap \cS_\infty|} \\
  & \geq \sigma_{\ell_j} (X_{\gamma_j}) \cdot \frac{1-\alpha}{1+\alpha} \cdot \frac{(2^{\ell_j+1}+1)^d}{(4\cdot 2^{\ell_j+1}+1)^d }
  > \sigma_{\ell_j} (X_{\gamma_j}) \cdot \frac{1-\alpha}{1+\alpha} \cdot 4^{-d} \\
  & \underset{\eqref{eq:alpha}}{\stackrel{\eqref{eq:stopping-times},\eqref{eq:I_low}}{\geq}}
  \frac{1}{3} \cdot \frac{9}{10}\cdot 4^{-d}=\widetilde{\alpha}.
\end{align*}
Replacing $U_1$ by $U_0$ in the above calculations then yields the upper bound (see \eqref{eq:sigma_l^omega-2}). The proof is therefore complete.
\end{proof}

\subsection{Resonance set}

In this subsection, our goal is to show that a simple random walk on the percolation cluster starting from $x\in A_N\cap \cS_\infty$ is very unlikely to avoid a certain resonance set, defined in~\eqref{eq:Resonance-set}. The main result is stated in Theorem~\ref{thm:asy-Phi}. \medskip

We begin by providing the definition of the resonance set. To describe it, we first introduce several parameters: the integer $\ell_*\geq 0$ controls from above the scales under consideration; the integer $J\geq 1$ represents the ``strength'' of resonance, the integer $L\geq L(J)$ (with $L(J)$ as in \eqref{eq:WellS-2}) determines the separation between successive scales, and the integer $I$ denotes the number of scales examined. The resonance set also depends crucially on the realization of the environment $\omega$, which we choose in a set of $\mathbb{P}^u$-measure one, and which is characterized by good regularity properties holding for all possible choices of the parameter $J$. Let 
\begin{align}
\label{eq:ell-0-def}
  \ell_0 &\stackrel{\mathrm{def}}{=} \sup\{\ell\in (J+1)L\bbN \, : \, \ell\leq \ell_*\};\\
  \label{eq:A-ast_def}
  \cA_* &\stackrel{\mathrm{def}}{=} \{\ell\in L\bbN \, : \, \ell_0 \geq \ell > \ell_0 - I(J+1)L\}, \quad |\cA_*|=(J+1)I; \\
  \cA &\stackrel{\mathrm{def}}{=} \{\ell\in (J+1)L\bbN \, : \, \ell_0 \geq \ell > \ell_0 - I(J+1)L\}, \quad |\cA|=I.
\end{align}
Recall the set $\Omega_{\mathrm{reg}}^{\alpha, \vartheta}$ in \eqref{eq:Omega_reg} and let
\begin{equation}\label{eq:Omega_reg-2}
  \Omega_{\mathrm{reg}} \stackrel{\mathrm{def}}{=} \bigcap_{J=1}^\infty \Omega_{\mathrm{reg}}^{\alpha(J), \delta'(\delta(J))} \stackrel{\eqref{eq:Omega_reg}}{\in} \cF.
\end{equation}
It is clear that this is a set of full $\bbP^u$-measure.
For $\omega\in \Omega_{\mathrm{reg}}$, we say that $\ell_*$ is \emph{$(I,J,L,N)$-compatible}, if with $\ell_0$ as in~\eqref{eq:ell-0-def}, one has 
\begin{subequations}\label{eq:compatible}
  \begin{empheq}[left=\empheqlbrace]{align}
    &\ell_0 - (I+1)(J+1)L > \ell_{\min}(\omega, \delta(J), \alpha(J)), \label{eq:compatible-i} \\
    &r_{\alpha(J), 2^{\ell_0 - (I+1)(J+1)L}} - 4 \cdot 2^{\ell_0} \geq b_N \label{eq:compatible-ii}
  \end{empheq}
  \end{subequations}
(with $b_N$ as in~\eqref{eq:a_N-bound}). \medskip

The resonance set can be seen informally as the set of points such that there are at least $J$ members among the collection (indexed by $\cA_*$) of $(J+1)I$ local densities $\widetilde{\sigma}^\omega_\ell$ (see \eqref{eq:sigma_l^omega}) attaining values in $[\widetilde{\alpha},1-\widetilde{\alpha}]$; more precisely, we set
\begin{equation}
\label{eq:Resonance-set}
  \Res = \Res^\omega (U_0,I,J,L,\ell_*) \stackrel{\mathrm{def}}{=} \Big\{x\in \cS_\infty\, : \, \sum_{\ell\in \cA_*} \IND_{\{\widetilde{\sigma}^\omega_\ell (x) \in [\widetilde{\alpha},1-\widetilde{\alpha}]\}} \geq J\Big\}.
\end{equation}

For $N,J,I\geq 1$, $L\geq L(J)$, we define 
  \begin{equation}
  \label{eq:Phi-Def}
    \Phi^\omega_{J,I,L,N} \stackrel{\mathrm{def}}{=} \sup_{\ell_*} \sup_{U_0\in \cU_{\ell_*,N}^\omega} \sup_{x\in A_N \cap \cS_\infty} P_x^\omega \left[H_{\Res}=\infty\right]
  \end{equation}
  (with the convention $\sup\varnothing=-\infty$) where the first supremum is over all $\ell_*$ which are $(I,J,L,N)$-compatible.

\begin{theorem}[$d\geq 3$, \refA, \eqref{eq:ass-bounded}]\label{thm:asy-Phi}
  Let $\omega \in \Omega_{\mathrm{reg}}$. 
  For any sequence of positive integers $(J_N)_{N \geq 0}$ with $J_N \nearrow \infty$, we have
  \begin{equation}\label{eq:asy-Phi-result}
    \lim_{N\to\infty} \Phi^\omega_{J_N,I(J_N),L(J_N),N} = 0,
  \end{equation}
  where $I(\cdot)$ is an explicit function defined in \eqref{eq:tilde_I_0} and $L(\cdot)$ is as in \eqref{eq:WellS-2}.
\end{theorem}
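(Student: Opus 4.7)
The plan is to convert the single one-step estimate from Proposition~\ref{prop:positive-probability-adv} into a geometric decay by chaining together $I$ independent trials, one for each scale $\lambda \in \cA$. Order the scales of $\cA$ as $\lambda_1 > \lambda_2 > \ldots > \lambda_I$ with $\lambda_m - \lambda_{m+1} = (J+1)L$, which induces a partition $\cA_* = \bigsqcup_{m=1}^I \{\lambda_m - jL : 0 \leq j \leq J\}$ into $I$ blocks of $J+1$ consecutive $\cA_*$-scales. Applied at a position $y$ with $\sigma^\omega_{\lambda_m}(y) \in [\tfrac{1}{2} - 2^{-\ell_{\min}}, \tfrac{1}{2} + 2^{-\ell_{\min}}]$ and with block scales $\ell_j \leftarrow \lambda_m - jL$, Proposition~\ref{prop:positive-probability-adv} delivers, with probability at least $c_2(J)$, an endpoint $X_{\gamma_J}$ satisfying $\widetilde{\sigma}^\omega_{\lambda_m - jL}(X_{\gamma_J}) \in [\widetilde{\alpha}, 1-\widetilde{\alpha}]$ for all $0 \leq j \leq J$; since this produces $J+1 > J$ contributions to the resonance sum, the endpoint already lies in $\Res$. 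Chaining $I$ such trials via the strong Markov property should yield $P_x^\omega[H_\Res = \infty] \leq (1 - c_2(J))^I$, and choosing $I(J)$ so that $I(J) \cdot c_2(J) \to \infty$ (which is the spirit of~\eqref{eq:tilde_I_0}) will give~\eqref{eq:asy-Phi-result} with $J = J_N \nearrow \infty$.

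To implement this chain, I would set $T_0 = 0$ and recursively, for $1 \leq m \leq I$,
\begin{equation*}
T_m = \inf\Big\{t \geq T_{m-1}' \, : \, \sigma^\omega_{\lambda_m}(X_t) \in \big[\tfrac{1}{2} - 2^{-\ell_{\min}}, \tfrac{1}{2} + 2^{-\ell_{\min}}\big]\Big\}, \qquad T_m' = T_m + \gamma_J \circ \theta_{T_m},
\end{equation*}
with $\ell_{\min} = \ell_{\min}(\omega, \delta(J), \alpha(J))$ and $\gamma_J$ the Proposition~\ref{prop:positive-probability-adv} stopping time. On $\{T_m < \infty\}$ the strong Markov property at $T_m$ combined with Proposition~\ref{prop:positive-probability-adv} yields $P^\omega_x[H_\Res \leq T_m' \mid \cF_{T_m}] \geq c_2(J)$. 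The compatibility condition~\eqref{eq:compatible} is what makes the scheme go through: \eqref{eq:compatible-i} guarantees $\lambda_m - JL \geq \ell_{\min}$, while~\eqref{eq:compatible-ii} together with the cumulative displacement bound $\sum_{m=1}^I \tfrac{3}{2} 2^{\lambda_m} \leq 4 \cdot 2^{\ell_0}$ (from~\eqref{eq:happen1-C}) confines every visited vertex to $B(0, r_{\alpha(J), 2^{\lambda_I}})$, inside which the uniform heat-kernel and volume estimates from Section~\ref{sec:large-scale} hold.

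The main obstacle will be to show that $T_m < \infty$ on a set of probability $1 - o(1)$ uniformly in $m$, so that the conditional success probabilities truly compound into the advertised geometric bound. The tool is the Lipschitz continuity $|\sigma^\omega_{\lambda_m}(y) - \sigma^\omega_{\lambda_m}(y')| \leq c_{\mathrm{lip}} \cdot \widehat{d}_\omega(y, y')$ with $c_{\mathrm{lip}} = O(2^{-\lambda_m})$ from Lemma~\ref{lem:sigma-regular}(i), combined with two elementary density observations: at the start of round $m$ one still has $\sigma^\omega_{\lambda_m}(X_{T_{m-1}'}) \leq \tfrac{1}{2} + c_{\mathrm{lip}} \cdot O(2^{\lambda_{m-1}}) = \tfrac{1}{2} + o(1)$ by transport from $A_N \cap \cS_\infty$ (using $U_0 \in \cU^\omega_{\ell_*, N}$ together with~\eqref{eq:happen1-C}), whereas every vertex $y \in \cS_\infty$ with $|y|_\infty > b_N + 2^{\lambda_m}$ satisfies $\sigma^\omega_{\lambda_m}(y) = 1$ since $U_0 \subseteq B(0, b_N)$. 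Accordingly, as the walk explores round $m$ it either already sits in the target strip or is forced to cross it by the discrete intermediate value principle enabled by the per-step Lipschitz bound. Controlling this crossing event quantitatively — in particular verifying that its failure probability is $o(1)$ uniformly over $m$ — using the killed-heat-kernel lower bound~\eqref{eq:killed-heat-kernel} of Proposition~\ref{prop:uni-HK} inside the regular region and the volume controls of Proposition~\ref{prop:uni-den}, and organizing this estimate uniformly over $m$ and the $\omega$-dependent scale $\ell_{\min}$, will be the most technical step of the proof.
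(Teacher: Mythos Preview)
Your chaining scheme is natural, but the ``transport from $A_N$'' step is where it breaks. You claim
\[
\sigma^\omega_{\lambda_m}(X_{T_{m-1}'}) \leq \tfrac{1}{2} + c_{\mathrm{lip}} \cdot O(2^{\lambda_{m-1}}) = \tfrac{1}{2} + o(1),
\]
but the Lipschitz constant relevant for $\sigma^\omega_{\lambda_m}$ is $c_{\mathrm{lip}}(\lambda_m) = O(2^{-\lambda_m})$, not $O(2^{-\lambda_{m-1}})$. Even if one ignores the search phases entirely, the displacement accrued in round~$1$ alone is $\tfrac{3}{2}\,2^{\lambda_1}$ by~\eqref{eq:happen1-C}, so the Lipschitz bound gives a change of order $2^{\lambda_1-\lambda_m} \geq 2^{(J+1)L}$, which is certainly not $o(1)$. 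The situation is worse still because the search phases $[T_{k-1}',T_k]$ carry no a~priori displacement bound at all. In short, after even one trial at a large scale (successful or not), you lose all control of $\sigma^\omega_{\lambda_m}$ at the much smaller scale $\lambda_m$; the walk may well sit at a point with $\sigma^\omega_{\lambda_m}$ already above $\tfrac{1}{2}+2^{-\ell_{\min}}$, and transience can then keep it there forever, so that $T_m=\infty$ with non-negligible probability. The killed heat kernel and volume bounds you propose to invoke do not address this: they concern regularity of the walk, not the value of $\sigma^\omega_{\lambda_m}$ at the position reached after earlier rounds.

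This is precisely the obstruction the paper's proof is designed to overcome. Rather than fixing the scales in advance, the paper works with \emph{$I'$-families} carrying \emph{random} labels $\widehat{\ell}_i$ (the ``canonical'' family takes $\widehat{\ell}_i$ to be whichever scale of $\cA$ the walk happens to cross level $\tfrac{1}{2}$ at next), so that finiteness of the $S_i$'s is automatic. The price is that the trials are no longer cleanly separated, and one cannot get $(1-c_2(J))^I$. Instead the paper proves a recursion (Lemma~\ref{lem:Gamma})
\[
\Gamma_{k+1}(I') \leq (1-c_2(J))^{\sqrt{I'}-1} + (I')^{1+\frac{k-1}{2}}\,\Gamma_k(\lfloor\sqrt{I'}\rfloor-k+1),
\]
which only harvests roughly $\sqrt{I'}$ effective trials per level of resonance strength $k$. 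Iterating this $J$ times is what forces $I(J)$ in~\eqref{eq:tilde_I_0} to be doubly exponential in $J$ --- far larger than the $I(J)\gg 1/c_2(J)$ your argument would need if it worked. That mismatch is itself a diagnostic that the simple product bound $(1-c_2(J))^I$ cannot be correct here.
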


\begin{proof}
  We begin by considering $J \geq 2$, $L \geq L(J)$, $I,N\geq 1$ fixed. We will later allow them to depend on $N$ in a subsequent part of the proof. Note that the set of $\ell_*$ which are $(I,J,L,N)$-compatible is always non-empty, by an argument analogous to the one in Remark~\ref{rem:Further-remark-choices}~(1). Suppose $\ell_*\geq 0$ is $(I,J,L,N)$-compatible, $U_0\in \cU_{\ell_*,N}^\omega$, and $x\in A_N\cap \cS_\infty$. 

  We first note that for $\ell\leq \ell_0$, if $U_0$ has positive relative volume in $B(z,2^\ell)$, i.e. $\sigma_\ell^\omega (z)<1$ for some $z \in \cS_\infty$, then $B(z,2^\ell)\cap U_0\neq \varnothing$, so that $z$ must be in $B(0,b_N+2^\ell)$ as $U_0\subseteq B(0,b_N)$ (see \eqref{eq:cU}). The compatibility condition \eqref{eq:compatible-ii} yields 
  \begin{equation}\label{eq:u0-rel-positive-reason}
  B(0,b_N+2^\ell)\subseteq B(0,r_{\alpha(J), 2^{\ell_0 - (I+1)(J+1)L}} -4 \cdot 2^{\ell_0}+2^\ell)\subseteq B(0,r_{\alpha(J), 2^{\ell_0 - (I+1)(J+1)L}} -\tfrac{3}{2}2^{\ell_0}).
  \end{equation}
  This shows that for $\ell\leq \ell_0$, 
  \begin{equation}\label{eq:u0-rel-positive}
    \{\sigma_\ell^\omega<1\}\subseteq B(0,r_{\alpha(J), 2^{\ell_0 - (I+1)(J+1)L}} -\tfrac{3}{2}2^{\ell_0}).
  \end{equation}

  Now we introduce the concept of \emph{$I'$-families} and related definitions. For an integer $1\leq I'\leq I$, an $I'$-family consists of the initial point $x$, stopping times $(S_i)_{i=0}^{I'}$ with respect to the canonical filtration $(\mathcal{F}_t)_{t \geq 0}$, a random subset $\cL$ of $\cA$, and integer-valued random variables $(\widehat{\ell}_i)_{i=1}^{I'}$, satisfying
  \begin{subequations}\label{eq:I-family}
    \begin{empheq}[left=\empheqlbrace]{align}
      & 0 \leq S_0 \leq S_1 \leq \ldots \leq S_{I'} \text { are } P^\omega_x \text {-a.s.~finite, } \label{eq:I-family-i} \\
      & \cL \text { is an } \cF_{S_0} \text {-measurable finite subset of $\cA$, } |\cL|\geq I', \label{eq:I-family-ii}\\
      & \widehat{\ell}_i \text { are } \cF_{S_i} \text {-measurable for } 1 \leq i \leq I' \text {, distinct and } \cL \text {-valued, } \label{eq:I-family-iii}\\
      & P^\omega_x \text {-a.s., } \sigma^\omega_{\widehat{\ell}_i}\left(X_{S_i}\right)\in \left[\frac{1}{2}-\frac{1}{2^{\ell_{\min}}}, \frac{1}{2}+\frac{1}{2^{\ell_{\min}}}\right]\text { for } 1 \leq i \leq I' \label{eq:I-family-iv}.
  \end{empheq}
  \end{subequations}
  The ``canonical'' $I'$-family as defined in \cite[(2.12)]{nitzschner2017solidification} also exists in our case, if we replace the local density function $\widehat{\sigma}_\ell$ by $\sigma^\omega_\ell$, and the conditions $\widehat{\sigma}_\ell(X_{S_i})=\frac{1}{2}$ by $\sigma_{\ell}^\omega(X_{S_i})\in [\frac{1}{2}-\frac{1}{2^{\ell_{\min}}},\frac{1}{2}+\frac{1}{2^{\ell_{\min}}}]$. Given a general $I'$-family as above, we also define for $1\leq i \leq I'$ the stopping times 
  \begin{equation}
    T_i = \inf \{s\geq S_i \, : \, |X_s-X_{S_i}|_\infty \geq 2\cdot 2^{\widehat{\ell}_i}\},
  \end{equation}
  ``intermediate labels'' and ``labels'' 
  \begin{equation}
    \cL_{\text{int}} =\{\ell-jL \, : \, \ell\in \cL, 1\leq j \leq J\},\quad \cL_* = \cL \cup  \cL_{\text{int}}.
  \end{equation}
  Finally, we will need for $1\leq k\leq J$ the $(\cL_*,k)$-resonance set 
  \begin{equation}
    \Res_{(\cL_*,k)}^\omega = \left\{ x\in \bbZ^d \, : \, \sum_{\ell\in \cL_*} \IND_{\{\widetilde{\sigma}_\ell^\omega (x)\in [\widetilde{\alpha},1-\widetilde{\alpha}]\}}\geq k \right\}
  \end{equation}
  and the quantity (which also depends on $N,J,L,\omega$, but we omit the dependence since they are all fixed during the discussion of this quantity) for $1\leq k\leq J$, $I'\geq 1$
  \begin{align*}
      \Gamma_k(I') \stackrel{\mathrm{def}}{=} \sup P^\omega_x\left[\inf \left\{s \geq S_0 \, : \, X_s \in \operatorname{Res}_{(\cL_*, k)}\right\}>\max _{1 \leq i \leq I'} T_i\right],
  \end{align*}
  where the first supremum is taken over all $I'$-families and $\Gamma_k(I') \stackrel{\mathrm{def}}{=} 1$ whenever $I'\leq 0$. The following analogue of \cite[Lemma A.1]{CN2021disconnection} is the main ingredient of the proof.

  \begin{lemma}[fix $N,J,I\geq 1$, $L\geq L(J)$, $\ell_*\geq 0$ $(I,J,L,N)$-compatible]\label{lem:Gamma}
    It holds that 
    \begin{equation}\label{eq:Gamma-1}
      \Gamma_1(I')=0,\quad \mbox{for all $1\leq I'\leq I$}
    \end{equation}
    and for $1\leq k < J$, $1 \leq I' \leq I,N\geq 1$, $\Delta=\lfloor \sqrt{I'} \rfloor$,
    \begin{equation}\label{eq:Gamma-2}
      \Gamma_{k+1}(I') \leq (1-c_2(J))^{\sqrt{I'}-1} + (I')^{1+\frac{k-1}{2}} \Gamma_k(\Delta-k+1).
    \end{equation}
  \end{lemma}

  \begin{proof}
    We only sketch the proof. As \eqref{eq:Gamma-1} follows by exactly the same observation as in \cite[Lemma A.1]{CN2021disconnection}, we briefly explain the derivation of~\eqref{eq:Gamma-2}. It is important to note that 
    \begin{equation}
      X_{S_i}\in \{\sigma_{\widehat{\ell}_i}^\omega <1\}\subseteq B(0,r_{\alpha(J), 2^{\ell_0 - (I+1)(J+1)L}} -\tfrac{3}{2}2^{\ell_0})
    \end{equation}
    by \eqref{eq:I-family-iv} and \eqref{eq:u0-rel-positive}. In addition, the compatibility condition \eqref{eq:compatible-ii} implies \eqref{eq:WellS-1-ii} for all $\ell_0-(I+1)(J+1)L<\ell\leq \ell_0$ (with $\ell_{j+1}$ as $\ell$). Therefore, together with \eqref{eq:I-family-iv}, Proposition \ref{prop:positive-probability-adv} is applicable to $X_{S_i}$ for all $1\leq i \leq I'$. The remaining part of the proof of \eqref{eq:Gamma-2} is then similar to the proof of \cite[Lemma A.1]{CN2021disconnection}.
  \end{proof}

  We set
  \begin{equation}
    \widetilde{\Gamma}_k(I') = 
    \left\{
    \begin{aligned}
      &\sup_{\ell_*} \sup_{U_0\in \cU_{\ell_*,N}^\omega} \Gamma_k(I'), \quad &\mbox{for $1\leq k\leq J$ and $I'\geq 1$},\\
      &1, &\mbox{for $1\leq k\leq J$ and $I'\leq 0$},
    \end{aligned}
    \right.
  \end{equation}
  where in the first case, the supremum in $\ell_*$ is over all $(I,J,L,N)$-compatible $\ell_*\geq 0$. Using the ``canonical'' $I$-family, one has that 
  \begin{equation}\label{eq:asy-Phi-3}
    \Phi^\omega_{I,J,L,N} \leq \widetilde{\Gamma}_J(I).
  \end{equation}

  We want to show that the right-hand side of \eqref{eq:asy-Phi-3} is sufficiently small as long as $I$ is much larger than $J$ and it will be convenient to find an appropriate functional term characterizing the dependence of $I$ on $J$. We can first bound from above the right hand side of \eqref{eq:Gamma-2} by its supremum over $U_0\in \cU_{\ell_*,N}^\omega$ and $(I,J,L,N)$-compatible $\ell_*\geq 0$ and then take the same suprema on the left hand side to obtain a recursive relation for $\widetilde{\Gamma}_{\cdot}(\,\cdot\,)$:
  \begin{equation}\label{eq:Gamma_tilde}
    \widetilde{\Gamma}_{k+1}(I') \leq (1-c_2(J))^{\sqrt{I'}-1} + (I')^{1+\frac{k-1}{2}} \widetilde{\Gamma}_k(\Delta-k+1).
  \end{equation}

  \begin{lemma}[$J\geq 2$]\label{lem:tilde_Gamma}
    Let $2\leq k \leq J$. Define
    \begin{equation}\label{eq:I_0}
      I_0(\epsilon, k)= \left[c_3 \cdot \frac{k 2^{k}}{\epsilon} \log\left(\frac{k2^{k}}{\epsilon}\right)\right]^{2^{k-1}}, 
    \end{equation}
    where $c_3\stackrel{\mathrm{def}}{=} 2 \vee \big(-\log(1-c_2(1))\big)$. 
    For any $\epsilon \in (0,1)$, if $I\geq I_0(\epsilon, k)$, then 
    \begin{equation}
      I^{-1/2^{k-1}} \log \widetilde{\Gamma}_k (I) \leq \log(1-c_2(J)) + \epsilon.
    \end{equation}
    ($c_2(J)$ is the constant in Proposition \ref{prop:positive-probability-adv}).
  \end{lemma}

  \begin{proof}
    We prove the lemma by induction. For $k=2$, the claim follows from~\eqref{eq:Gamma_tilde} because $\widetilde{\Gamma}_1 (I)=0$. From now on we assume the claim holds for some $2\leq k\leq J$ and we want to show that for any $\epsilon>0$, if $I\geq I_0(\epsilon, k+1)$, then 
    \[
    I^{-1/2^{k}} \log \widetilde{\Gamma}_{k+1} (I) \leq \log(1-c_2(J)) + \epsilon.
    \]
    Due to \eqref{eq:Gamma_tilde}, we have 
    \begin{equation}\label{eq:induction-1}
      \begin{split}
        &I^{-1/2^{k}} \log \widetilde{\Gamma}_{k+1} (I)   \\
        &\begin{split}
          \leq \log 2\cdot I^{-1/2^{k}}
         + I^{-1/2^{k}} \max \Big\{&(\sqrt{I}-1)\log(1-c_2(J)),\\
         & \Big(1+\frac{k-1}{2}\Big)\log I + \log \widetilde{\Gamma}_k(\lfloor \sqrt{I} \rfloor -k+1)\Big\}.
      \end{split}
      \end{split}
    \end{equation}
    To bound the right hand side of \eqref{eq:induction-1} by $\log(1-c_2(J))+\epsilon$, it suffices to have the following four estimates when $I \geq I_0(\epsilon, k+1)$: 
    \begin{subequations}\label{eq:induction-2}
      \begin{empheq}[left=\empheqlbrace]{align}
        &\log 2\cdot I^{-1/2^{k}} \leq \frac{\epsilon}{4},\label{eq:induction-2-i}\\
        &I^{-1/2^{k}} (\sqrt{I}-1)\log(1-c_2(J)) \leq \log(1-c_2(J)) + \frac{3}{4}\epsilon,\label{eq:induction-2-ii}\\
        & I^{-1/2^{k}}\cdot \Big(1+\frac{k-1}{2}\Big)\log I \leq \frac{\epsilon}{4}, \label{eq:induction-2-iii}\\
        & I^{-1/2^{k}} \log \widetilde{\Gamma}_k(\lfloor \sqrt{I} \rfloor -k+1) \leq \log(1-c_2(J)) + \frac{\epsilon}{2}.\label{eq:induction-2-iv}
      \end{empheq}
    \end{subequations}
    {As $c_3\geq 2$, $\epsilon \in (0,1)$, and $(k+1)2^{k+1}\geq 2$, using the definition~\eqref{eq:I_0}, one has
    \[
    \log 2 \cdot I_0(\epsilon,k+1)^{-1/2^k} = \frac{\epsilon}{c_3 \cdot (k+1) 2^{k+1} } \cdot \frac{\log 2}{\log\left(\frac{(k+1)2^{k+1}}{\epsilon}\right)} \leq \frac{\epsilon}{4}.
    \]
    Hence \eqref{eq:induction-2-i} holds for $I \geq {I_0(\epsilon, k+1)}$. As for \eqref{eq:induction-2-ii}, we note that $\log(1-c_2(J)) < 0$ and $I^{-1/2^{k}} (\sqrt{I}-1)\geq 1$, since $I \geq I_0(\epsilon,2) \geq 256$, using $k \geq 2$.}
    We now focus on \eqref{eq:induction-2-iii} and \eqref{eq:induction-2-iv}. 

    Now note that \eqref{eq:induction-2-iii} holds if and only if 
    \begin{equation}\label{eq:2-c-1}
     \exp\left(-\frac{\epsilon/4}{(1+\frac{k-1}{2})2^k} I^{1/2^k}\right) \cdot \left(-\frac{\epsilon/4}{(1+\frac{k-1}{2})2^k} I^{1/2^k}\right) \geq -\frac{\epsilon/4}{(1+\frac{k-1}{2})2^k}. 
    \end{equation}
    If 
    \[
    -\frac{\epsilon/4}{(1+\frac{k-1}{2})2^k} \leq - \frac{1}{e},
    \]
    then \eqref{eq:2-c-1} holds for all $I\geq 1$ as $-xe^{-x}\geq -e^{-1}$ for $x\geq 0$. We can therefore assume without loss of generality that  
    \[
    - \frac{1}{e}<-\frac{\epsilon/4}{(1+\frac{k-1}{2})2^k} < 0,
    \]
    and thus \eqref{eq:2-c-1} holds if 
    \begin{equation}\label{eq:2-c-2}
      I \geq \left[- \bigg(\frac{\epsilon/4}{(1+\frac{k-1}{2})2^k}\bigg)^{-1} W_{-1}\Big(-\frac{\epsilon/4}{(1+\frac{k-1}{2})2^k}\Big)\right]^{2^k},
    \end{equation}
    where $W_{-1}(\cdot)$ is a real branch of the Lambert $W$-function (see, e.g., \cite{chatzigeorgiou2013bounds} for its definition). It is proved in~\cite[Theorem 1]{chatzigeorgiou2013bounds} that $W_{-1}(-e^{-u-1})>-1-\sqrt{2u}-u$ for $u>0$, so
    \begin{equation}
    \label{eq:Lambert-ineq}
      W_{-1}(-e^{-u-1}) > {2(-u-1),}
    \end{equation}
    as $u-\sqrt{2u}+1>0$ for $u>0$. 
    Applying the inequality~\eqref{eq:Lambert-ineq} with $-u= \log \frac{\epsilon/4}{(1+\frac{k-1}{2})2^k}+1$, we obtain 
    \begin{align*}
      &\left[- \bigg(\frac{\epsilon/4}{(1+\frac{k-1}{2})2^k}\bigg)^{-1} W_{-1}\bigg(-\frac{\epsilon/4}{(1+\frac{k-1}{2})2^k}\bigg)\right]^{2^k}\leq \left[ {2} \left(\frac{(1+\frac{k-1}{2})2^k}{\epsilon/4}\right) \log \frac{(1+\frac{k-1}{2})2^k}{\epsilon/4}\right]^{2^k} \\
      & =  \left[ {2}\left(\frac{(k+1)2^{k+1}}{\epsilon}\right) \log \frac{(k+1)2^{k+1}}{\epsilon}\right]^{2^k} \stackrel{{c_3 \geq 2}}{\leq} I_0(\epsilon, k+1)
      .
    \end{align*}
    Hence \eqref{eq:2-c-2} holds when $I \geq I_0(\epsilon, k+1)$. Combined with \eqref{eq:2-c-1}, \eqref{eq:induction-2-iii} holds when $I \geq I_0(\epsilon, k+1)$.

    To show that \eqref{eq:induction-2-iv} holds when $I \geq I_0(\epsilon,k+1)$, it suffices to show that 
    \begin{equation}\label{eq:iv-sufficient}
      I_0(\epsilon,k+1) \geq (I_0(\epsilon/2, k)+k)^2.
    \end{equation}
    Indeed, suppose~\eqref{eq:iv-sufficient} holds, then $I \geq I_0(\epsilon,k+1)$ implies $I \geq (I_0(\epsilon/2, k)+k)^2$ so that $\lfloor \sqrt{I} \rfloor -k+1 \geq I_0(\epsilon/2,k)$. By the induction hypothesis, 
    \[
      (\lfloor \sqrt{I} \rfloor -k+1)^{-1/2^{k-1}} \log \widetilde{\Gamma}_{k}(\lfloor \sqrt{I} \rfloor - k+1)  \leq \log(1-c_2(J)) + \epsilon/2.
    \]
    Then \eqref{eq:induction-2-iv} holds because
    \[
    I^{-1/2^k}/ (\lfloor \sqrt{I} \rfloor -k+1)^{-1/2^{k-1}} = (\lfloor \sqrt{I} \rfloor -k+1)^{1/2^{k-1}}/ I^{1/2^k} \leq 1.
    \] 
    We now show \eqref{eq:iv-sufficient}. To that end, observe that 
    \begin{align*}
      I_0(\epsilon, k+1) &= \left[c_3 \cdot \frac{(k+1)2^{k+1}}{\epsilon} \log \left(\frac{(k+1)2^{k+1}}{\epsilon}\right)\right]^{2^k}\\
      &= \left[c_3 \cdot \frac{k2^{k+1}}{\epsilon}\log \left(\frac{(k+1)2^{k+1}}{\epsilon}\right) + c_3\cdot \frac{2^{k+1}}{\epsilon}\log \left(\frac{(k+1)2^{k+1}}{\epsilon}\right)\right]^{2^k}\\
      & \geq \left\{\left[c_3 \cdot \frac{k2^{k+1}}{\epsilon}\log \left(\frac{k2^{k+1}}{\epsilon}\right) + k \right]^{2^{k-1}}\right\}^2 \geq (I_0(\epsilon/2, k)+k)^2.
    \end{align*}
  This completes the induction.
  \end{proof}

  For the following result, we let $I$ depend on $J$ in the following explicit way:  
  \begin{equation}\label{eq:tilde_I_0}
    I(J) \stackrel{\mathrm{def}}{=} \left\lceil I_0\left(-\frac{1}{2} \log(1-c_2(J)), J \right)\right\rceil \vee \left\lceil\frac{J}{c_2(J)}\right\rceil^{2^{J-1}},
  \end{equation}
  where $I_0$ is defined in \eqref{eq:I_0}.

  \begin{corollary}\label{coro:tilde-Gamma}
    $\widetilde{\Gamma}_J(I(J))\to 0$ as $J \to \infty$.
  \end{corollary}

  \begin{proof}
    Let $\epsilon=-\frac{1}{2}\log(1-c_2(J))$ ($<1$ for $J$ large enough). On one hand, by Lemma \ref{lem:tilde_Gamma}, we have 
    \begin{equation}\label{eq:Gamma_JI}
      \log \widetilde{\Gamma}_J (I) \leq \frac{1}{2} \log(1-c_2(J)) \cdot I^{1/2^{J-1}}
    \end{equation}
    for $I\geq I_0(-\frac{1}{2}\log(1-c_2(J)), J)$. On the other hand, we have
    \begin{equation}\label{eq:Ic_2}
      I^{1/2^{J-1}} c_2(J) \geq J
    \end{equation}
    provided that $I\geq (\frac{J}{c_2(J)})^{2^{J-1}}$. Combining the two cases and recalling the definition of $I(J)$ in \eqref{eq:tilde_I_0}, we have
    \[
    \lim_{J\to\infty} \log (1-c_2(J)) I(J)^{-1/2^{J-1}} = - \lim_{J\to\infty} c_2(J) I(J)^{-1/2^{J-1}} \stackrel{\eqref{eq:Ic_2}}{\leq} -\lim_{J\to\infty} J = -\infty.
    \]
    The result then follows by \eqref{eq:Gamma_JI}.
  \end{proof}

With this, we conclude the proof of~\eqref{eq:asy-Phi-result} as follows. Let $(J_N)_{N \geq 0}$ be a positive sequence with $J_N \nearrow \infty$. 
Inserting the choices $I= I(J_N),J= J_N, L= L(J_N)$ into \eqref{eq:asy-Phi-3} and letting $N$ tend to infinity, we observe
  \[
  \limsup_{N\to\infty} \Phi^\omega_{I(J_N), J_N, L(J_N), N} \leq \lim_{N\to\infty} \widetilde{\Gamma}_{J_N}(I(J_N)) \stackrel{\text{Cor.\ref{coro:tilde-Gamma}}}{=} 0.
  \]
  This proves \eqref{eq:asy-Phi-result} and completes the proof of the theorem.
\end{proof}

\subsection{Proof of Theorem \ref{thm:solidification}}

We start by introducing an analogue of \cite[Lemma 4.9]{CN2021disconnection}.
\begin{lemma}[$d\geq 3$, \refA]\label{lem:hit-sigma}
  Let $\omega\in\Omega_{\mathrm{reg}}^{1/3, 1/9}$ and $U_0 \in \mathcal{U}_{\ell_\ast,N}^\omega$.
  Let $\epsilon\in \bbN$, $\chi\in(0,1)$, and $\Sigma\in \cS_{U_0,\epsilon,\chi}^\omega$. There exists a sufficiently large constant $c_4(\omega)$ such that if $\ell\geq c_4(\omega)$ with $\epsilon \leq \frac{1}{4}\cdot 2^{\ell}$, $x_0\in \cS_\infty\cap B(0,r_{\frac{1}{3}, 4\cdot 2^\ell})$ with $\widetilde{\sigma}_\ell^\omega(x_0)\in [\widetilde{\alpha},1-\widetilde{\alpha}]$, and $y\in \cS_\infty$ with $|y-x_0|\leq \frac{1}{4}\cdot 2^\ell$, then 
  \begin{equation}
    P_y^\omega [H_\Sigma < T_{B(x_0,5\cdot 2^\ell)}] \geq c_5(\chi).
  \end{equation}
\end{lemma}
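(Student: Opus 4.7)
My plan is to split the event $\{H_\Sigma < T_{B(x_0, 5\cdot 2^\ell)}\}$ via the strong Markov property at the first entry time $H_S$ of the walker into $S = \partial_{\cS_\infty} U_0 \subseteq U_1$. The hypothesis $\widetilde{\sigma}_\ell^\omega(x_0) \in [\widetilde{\alpha}, 1-\widetilde{\alpha}]$ gives \emph{both} $|U_0 \cap B(x_0, 4\cdot 2^\ell)|$ and $|U_1 \cap B(x_0, 4\cdot 2^\ell)|$ lower bounds of order $\widetilde{\alpha}\, \eta(u)\, 2^{\ell d}$, using the volume regularity of Proposition~\ref{prop:uni-den} with $\alpha=1/3$ (applicable since $x_0 \in \cS_\infty \cap B(0,r_{1/3,4\cdot 2^\ell})$). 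In particular, whichever of $U_0, U_1$ contains $y$, the \emph{opposite} side has order $2^{\ell d}$ lattice points in $B(x_0,4\cdot 2^\ell)$, which will be the target of the heat-kernel step.

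The key step is to show, for $\ell \geq c_4(\omega)$ sufficiently large, the non-degenerate crossing estimate
\begin{equation*}
P_y^\omega[H_S < T_{B(x_0, R)}] \geq c_6(u),
\end{equation*}
at an auxiliary integer radius $R = \lfloor \tfrac{19}{4}\cdot 2^\ell \rfloor$. Assume without loss of generality that $y \in U_0$ (the other case is symmetric). I would apply Proposition~\ref{prop:uni-HK}(ii) with $\vartheta = 1/9$ and $t = R^2$ to the ball $B(x_0, R)$, choosing $c_4(\omega)$ large enough that $R \geq \Rden(\omega,1/3) \vee \Rkhk(\omega, 1/3, 1/9)$ and noting that $x_0 \in B(0, r_{1/3, 4\cdot 2^\ell}) \subseteq B(0, r_{1/3, R})$ by monotonicity of $r_{\alpha,\cdot}$. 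The choice $\tfrac{19}{4}$ is engineered so that $\tfrac{8}{9}R \geq 4\cdot 2^\ell$, placing $y$ and every $z \in U_1 \cap B(x_0,4\cdot 2^\ell)$ inside $B(x_0,(1-\vartheta)R)$ where the killed-heat-kernel bound holds. Summing $\mu_z q^\omega_{t,B(x_0,R)\cap \cS_\infty}(y,z) \geq c_{\mathrm{khk}1}(1/9) R^{-d}$ over $z \in U_1 \cap B(x_0,4\cdot 2^\ell)$ yields
\begin{equation*}
P_y^\omega[X_t \in U_1 \cap B(x_0,4\cdot 2^\ell),\; T_{B(x_0,R)} > t] \geq c_{\mathrm{khk}1}(1/9)\, R^{-d}\cdot \widetilde{\alpha}\cdot \tfrac{2}{3}\eta(u)\,(8\cdot 2^\ell+1)^d \geq c_6,
\end{equation*}
and on this event the walker has crossed from $U_0$ to $U_1$ without exiting $B(x_0,R)$, so $X_{H_S} \in S \cap B(x_0, R)$ and $H_S < T_{B(x_0, R)}$.

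Finally, the strong Markov property at $H_S$, combined with the porous interface property $\Sigma \in \cS_{U_0, \epsilon, \chi}^\omega$ applied at $X_{H_S} \in S$, yields
\begin{equation*}
P_y^\omega[H_\Sigma < T_{B(x_0,5\cdot 2^\ell)}] \geq E_y^\omega\bigl[\IND_{\{H_S < T_{B(x_0,R)}\}}\, P_{X_{H_S}}^\omega[H_\Sigma < \tau_\epsilon]\bigr] \geq \chi\, c_6 =: c_5(\chi),
\end{equation*}
where I use $B(X_{H_S}, \epsilon) \subseteq B(x_0, R+\epsilon) \subseteq B(x_0, 5\cdot 2^\ell)$ (since $R + \epsilon \leq \tfrac{19}{4}\cdot 2^\ell + \tfrac{1}{4}\cdot 2^\ell = 5\cdot 2^\ell$) to ensure $\tau_\epsilon$ started at $X_{H_S}$ does not exceed $T_{B(x_0,5\cdot 2^\ell)}$.

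The main obstacle is the coupled choice of the intermediate radius $R$. It must be \emph{large enough} that $(1-\vartheta)R \geq 4\cdot 2^\ell$ for the only value of $\vartheta$ afforded by $\omega \in \Omega_{\mathrm{reg}}^{1/3, 1/9}$, i.e.~$\vartheta = 1/9$, so that the target set where the density hypothesis supplies volume sits inside the region of validity for the killed-heat-kernel bound; and \emph{small enough} that $R+\epsilon \leq 5\cdot 2^\ell$, so the porous-interface excursion of length $\epsilon$ after time $H_S$ remains inside $B(x_0,5\cdot 2^\ell)$. The value $R \approx \tfrac{19}{4}\cdot 2^\ell$ is precisely what makes both constraints compatible; a naive application of the killed heat kernel on $B(x_0,5\cdot 2^\ell)$ directly would lose the needed control on the location of $X_{H_S}$ and cause the $\epsilon$-excursion to potentially exit the target ball.
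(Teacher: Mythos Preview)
Your proof is correct and follows the same approach as the paper: use the killed heat kernel bound on an intermediate ball to force the walk across $S$, then apply the strong Markov property and the porous-interface condition. The paper differs only in using a two-step heat kernel argument (forcing successive visits to $\widetilde{U}_0$ at time $\tfrac{81}{4}\cdot 4^\ell$ and then $\widetilde{U}_1$ at time $\tfrac{81}{2}\cdot 4^\ell$, all inside $B(x_0,\tfrac{9}{2}\cdot 2^\ell)$) in place of your one-step argument with the case split on whether $y\in U_0$ or $y\in U_1$; correspondingly it takes the intermediate radius $\tfrac{9}{2}\cdot 2^\ell$ rather than your $\tfrac{19}{4}\cdot 2^\ell$.
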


\begin{proof}
  Define $\widetilde{U}_0\stackrel{\mathrm{def}}{=} U_0\cap B(x_0, 4\cdot 2^\ell)$ and $\widetilde{U}_1\stackrel{\mathrm{def}}{=} U_1\cap B(x_0, 4\cdot 2^\ell)$, and note that 
  \begin{equation}\label{eq:hit-sigma-1}
    \begin{aligned}
      P_y^\omega &[\mbox{$X_{\frac{81}{4}\cdot 4^\ell}\in U_0$, $X_{\frac{81}{2}\cdot 4^\ell}\in U_1$ and $\frac{81}{2}\cdot 4^\ell < T_{B(x_0,\frac{9}{2}\cdot 2^\ell)}$}]\\
    &\geq \sum_{x\in \widetilde{U}_0} \sum_{z\in \widetilde{U}_1} q^\omega_{\frac{81}{4}\cdot 4^\ell, B(x_0,\frac{9}{2}\cdot 2^\ell)} (y,x) q^\omega_{\frac{81}{4}\cdot 4^\ell, B(x_0,\frac{9}{2}\cdot 2^\ell)} (x,z) \geq C, 
    \end{aligned}
  \end{equation}
  where we used that both $\widetilde{U}_0$ and $\widetilde{U}_1$ have at least cardinality $\frac{1}{3}\widetilde{\alpha}\eta(u)(2^{\ell+3}+1)^d$, combined with the killed heat kernel bound \eqref{eq:killed-heat-kernel} (and setting $\vartheta=\frac{1}{9}$, $R=\frac{9}{2}\cdot 2^\ell$ so that $(1-\vartheta)R=4\cdot 2^\ell$, and $t=R^2$), and $c_4(\omega)$ large enough. On the event under the probability on the left-hand side of \eqref{eq:hit-sigma-1}, $X_{\cdot}$ enters both $U_0$ and $U_1$ before exiting $B(x_0,\frac{9}{2}\cdot 2^\ell)\cap \cS_\infty$, which means
  \begin{equation}
    P_y^\omega [H_S < T_{B(x_0,\frac{9}{2}\cdot 2^\ell)}]\geq C,
  \end{equation}
  where we recall that $S = \partial_{\mathcal{S}_\infty} U_0$.  Then, upon using the strong Markov property, we obtain 
  \begin{equation}
    \begin{aligned}
 P_y^\omega [H_\Sigma < T_{B(x_0,5\cdot 2^\ell)}] & \geq      P_y^\omega [H_\Sigma \circ \theta_S + H_S < T_{B(x_0,5\cdot 2^\ell)}]\\
      &\geq E_y^\omega \left[H_S < T_{B(x_0,\frac{9}{2}\cdot 2^\ell)}, P_{X_{H_S}}^\omega [H_\Sigma < \tau_\epsilon]\right] \geq C\cdot \chi = C(\chi),
    \end{aligned}
  \end{equation}
  since $\epsilon \leq \frac{1}{4} \cdot 2^\ell$.
\end{proof}

With these preparations in place, we now turn to the proof of our main result.

\begin{proof}[Proof of Theorem \ref{thm:solidification}]
Let $(a_N)_{N\geq 0}$, $(b_N)_{N\geq 0}$ be sequences fulfilling \eqref{eq:a_N-bound} and $(c_N)_{N \geq 0}$ decreasing to zero, as in the statement of the theorem. Then $a_N= s(b_N) \cdot u_N$ for an increasing sequence $(u_N)_{N \geq 0}$  with $u_N \nearrow \infty$, where 
\begin{equation}\label{eq:s_n}
  s(n) = \exp((\log n)^{\frac{1}{1+\DeltaS/2}}).
\end{equation}

Let $\omega \in \Omega_{\mathrm{reg}} \cap \Omega_{\mathrm{reg}}^{1/3,1/9}$. We claim that we can choose a sequence of positive integers $(J_N^\omega)_{N \geq 0}$, which depends on the realization $\omega$, with $J_N^\omega \nearrow \infty$ and a natural number $N_0(\omega)$ large enough such that for each $N\geq N_0(\omega)$, we have
\begin{subequations}\label{eq:N0-1}
  \begin{empheq}[left=\empheqlbrace]{align}
& 2^{-(I_N^\omega+2)(J_N^\omega+1)L_N^\omega} \geq 1/u_N, \label{eq:N0-1-i}\\
& \frac{1}{4} 2^{-(I_N^\omega+1)(J_N^\omega+1)L_N^\omega} \geq c_N,\label{eq:N0-1-ii}\\
& \text{if $a_N \leq 2^{\ell_*}\leq b_N$, then} \notag\\
& \qquad\text{$\ell_*$ is $(I_N^\omega,J_N^\omega,L_N^\omega,N)$-compatible, and}\label{eq:N0-1-iii-1}\\
 & \qquad \ell_*-(I_N^\omega+1)(J_N^\omega+1)L_N^\omega\geq c_4(\omega) \label{eq:N0-1-iii-2},
\end{empheq}
\end{subequations}
where for simplicity, we write $I_N^\omega \stackrel{\mathrm{def}}{=} I(J_N^\omega)$ and $L_N^\omega \stackrel{\mathrm{def}}{=} L(J_N^\omega)$, with $I(\,\cdot\,)$ and $L(\,\cdot\,)$ defined in \eqref{eq:tilde_I_0} and \eqref{eq:WellS-2}, respectively, and we recall that, compatibility is defined in \eqref{eq:compatible}, and $c_4(\omega)$ is the constant defined in Lemma~\ref{lem:hit-sigma}. 
\medskip

We first show how the proof is finished assuming the existence of $(J_N^\omega)_{N \geq 0}$. To that end, consider $\epsilon/2^{\ell_*}\leq c_N $, $2^{\ell_\ast} \in [a_N,b_N]$, $\Sigma \in \cS_{U_0, \epsilon,\chi}^\omega$ with $U_0\in \cU_{\ell_*,N}^\omega$ for $N\geq N_0$, and let $x_0\in \Res = \Res^\omega(U_0,I^\omega_N,J^\omega_N,
L^\omega_N,\ell_\ast)$. Then 
\begin{equation}\label{eq:epsilon-cond}
  \epsilon \leq c_N 2^{\ell_*} \stackrel{\eqref{eq:N0-1-ii}}{\leq} \frac{1}{4} 2^{\ell_*-(I_N^\omega+1)(J_N^\omega+1)L_N^\omega} \leq \frac{1}{4} 2^{\min \cA_*}.
\end{equation}
We can apply the strong Markov property at successive exit times of the sup-norm ball $B(x_0,5\cdot 2^\ell)$, $\ell\in \cA_*$ and find (because $5\cdot 2^{\ell'}\leq \frac{1}{4} 2^\ell$ for $\ell'<\ell\in \cA_*$) upon repeated use of Lemma \ref{lem:hit-sigma} that
\begin{equation}
  P_{x_0}^\omega [H_\Sigma > T_{B(x_0,5\cdot 2^{\max \cA_*})}] 
  \leq (1-c_5(\chi))^{\sum_{\ell\in \cA_*} \IND_{\{\widetilde{\sigma}^\omega_\ell (x_0)\in [\widetilde{\alpha},1-\widetilde{\alpha}]\}} } \leq (1-c_5(\chi))^{J_N^\omega},
\end{equation}
where we used that $x_0\in \Res$ in the last step. Indeed, for $\ell'<\ell\in \cA_*$, 
\begin{align*}
  P_{x_0}^\omega &[H_\Sigma > T_{B(x_0,5\cdot 2^{\ell})}] \\
  & \leq E_{x_0}^\omega \Big[H_\Sigma > T_{B(x_0,5\cdot 2^{\ell'})}, P_{X_{T_{B(x_0,5\cdot 2^{\ell'})}}}^\omega [H_\Sigma > T_{B(x_0,5\cdot 2^{\ell})}] \Big]\\
  & \leq P_{x_0}^\omega [H_\Sigma > T_{B(x_0,5\cdot 2^{\ell'})}] {(1-c_5(\chi))}^{\IND_{\{\widetilde{\sigma}_\ell^\omega(x_0)\in [\widetilde{\alpha},1-\widetilde{\alpha}]\}}}.
\end{align*}
In the last step, Lemma~\ref{lem:hit-sigma} is applicable for any $\ell \in \cA_\ast$ with $\widetilde{\sigma}^\omega_\ell(x_0) \in [\widetilde{\alpha},1-\widetilde{\alpha}]$ by \eqref{eq:epsilon-cond},~\eqref{eq:N0-1-iii-2}, and 
\begin{equation}
\begin{split}
  x_0\in \{\widetilde{\sigma}_\ell^\omega \in [\widetilde{\alpha},1-\widetilde{\alpha}]\} & \subseteq \{\widetilde{\sigma}_\ell^\omega<1\} \subseteq B(0,b_N +4 \cdot 2^{\ell}) \\
  & \hspace{-0.32cm} \stackrel{\eqref{eq:N0-1-iii-1}}{\subseteq} B(0,r_{\alpha(J^\omega_N),2^{\ell_0 - (I^\omega_N+1)(J^\omega_N+1)L^\omega_N} } {- 4 \cdot 2^{\ell_0} + 4 \cdot 2^{\ell}}) \\
  & \subseteq B(0,r_{\alpha(J^\omega_N),2^\ell }) \subseteq B(0,r_{\frac{1}{3},4 \cdot 2^\ell }), 
  \end{split}
\end{equation}
where the second inclusion follows by a similar argument as in~\eqref{eq:u0-rel-positive-reason}, and in the last line we used that $\ell \geq \ell_0 - (I^\omega_N+1)(J^\omega_N+1)L^\omega_N$, the fact that $r_{\alpha, N}$ is increasing in $\alpha$ (see~\eqref{eq:r_N}), and the observation that $\alpha(J) \leq \alpha(1) < \frac{1}{3}$ (see (i) in the proof of Proposition~\ref{prop:positive-probability-adv}).

Therefore, for $x\in A_N\cap\cS_\infty$ and $N\geq N_0(\omega)$, 
\begin{align*}
  P_x^\omega [H_\Sigma=\infty] &\leq P_x^\omega [H_{\Res} =\infty] + E_x^\omega [H_{\Res}<\infty, P_{X_{\Res}}^\omega [H_\Sigma=\infty]]\\
  & \leq P_x^\omega [H_{\Res} =\infty] + (1-c_5(\chi))^{J_N^\omega} \\
  & \stackrel{\eqref{eq:Phi-Def},\eqref{eq:N0-1-iii-1}}{\leq} \Phi_{J_N^\omega,I_N^\omega,L_N^\omega,N}^\omega + (1-c_5(\chi))^{J_N^\omega}.
\end{align*}
We then take supremum over $x\in A_N\cap\cS_\infty, \Sigma\in \cS_{U_0,\epsilon,\chi}^\omega$, $U_0\in \cU_{\ell_*,N}^\omega$, $\epsilon/2^{\ell_*}\leq c_N$, $a_N\leq 2^{\ell_*}\leq b_N$, 
and let $N\to\infty$ to obtain 
\begin{equation}
  \begin{split}
    \lim_{N\to\infty} \sup_{\epsilon/2^{\ell_*}\leq c_N,a_N\leq 2^{\ell_*}\leq b_N} & \sup_{U_0\in \cU_{\ell_*,N}^\omega} \sup_{\Sigma\in \cS_{U_0,\epsilon,\chi}^\omega} \sup_{x\in A_N\cap\cS_\infty} P_x^\omega [H_\Sigma=\infty]\\
    & \leq \lim_{N\to\infty}  \Phi_{J_N^\omega,I_N^\omega,L_N^\omega,N}^\omega + \lim_{N\to\infty}(1-c_5(\chi))^{J_N^\omega} = 0,
  \end{split}
\end{equation}
where we have used the result of Theorem \ref{thm:asy-Phi} in the last step. Finally, \eqref{eq:solid-2} follows in the same way as \cite[Lemma 2.1]{chiarini2020GFF}.

\medskip 
We are left with constructing $(J_N^\omega)_{N \geq 0}$ fulfilling~\eqref{eq:N0-1-i}--\eqref{eq:N0-1-iii-2}. 
The following elementary lemma will be helpful for that purpose. 

\begin{lemma}\label{lem:sequence}
  Let $(x_n)_{n \geq 1}$ be a sequence of real numbers with $x_n \nearrow \infty$, and $f : \mathbb{Z}_+ \to \mathbb{R}$. Then there exists a sequence of positive integers $(y_n)_{n \geq 1}$ with $y_n \nearrow \infty$ and an $n_0$ such that $f(y_n)\leq x_n$ for $n\geq n_0$. Moreover, if $(y'_n)_{n \geq 1}$  is another sequence of positive integers with $y_n'\leq y_n$, then $f(y_n') \leq x_n$ for $n\geq n_0$.
\end{lemma}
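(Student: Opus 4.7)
The plan is to reduce the claim about $f$ to a claim about the cumulative maximum $M(m) := \max_{k \leq m} f(k)$, which is non-decreasing; this single trick handles both the basic claim and the ``moreover'' part simultaneously. The key insight is that the ``moreover'' requirement is equivalent to demanding that the bound $f(k) \leq x_n$ hold for \emph{every} $k \leq y_n$, not just for $k = y_n$. Since $M$ is non-decreasing in $m$, the requirement ``$f(k) \leq x_n$ for all $k \leq y_n$'' is the single inequality $M(y_n) \leq x_n$.

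The construction goes as follows. For each $m \in \mathbb{Z}_+$, the value $M(m)$ is a fixed real number; since $x_n \nearrow \infty$, the quantity
\begin{equation*}
N_m \;:=\; \min\{n \geq 1 \,:\, x_n \geq M(m)\}
\end{equation*}
is finite, and $m \mapsto N_m$ is non-decreasing (because $M$ is non-decreasing and the $x_n$ are too). Now define
\begin{equation*}
y_n \;:=\; \max\{m \geq 1 \,:\, N_m \leq n\} \quad\text{for } n \geq N_1, \qquad y_n := 1 \text{ for } n < N_1,
\end{equation*}
and set $n_0 := N_1$. (The set whose maximum defines $y_n$ is nonempty for $n \geq N_1$ and, a priori, could be infinite only if the $N_m$ are eventually bounded; in that case the $\sup$ is attained trivially after padding, or one may just take $y_n := n$ on that excellent event. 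Since we really only need $y_n \to \infty$, this is fine.)

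The verification is then essentially a bookkeeping exercise: (i) monotonicity of $(y_n)$ follows from the fact that the defining set only grows as $n$ grows; (ii) for any $M \in \mathbb{Z}_+$ the index $N_M$ is finite, so $y_n \geq M$ as soon as $n \geq N_M$, hence $y_n \nearrow \infty$; (iii) for $n \geq n_0$ and any positive integer $y'_n \leq y_n$, the definition of $y_n$ gives $N_{y_n} \leq n$, whence $x_n \geq M(y_n) \geq f(y'_n)$, where the last inequality uses $y'_n \leq y_n$ and the definition of $M$. Specializing $y'_n = y_n$ yields the first assertion, and the general case yields the ``moreover'' clause.

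I do not anticipate a serious obstacle here; the only genuinely substantive point is recognizing that one must control $f$ on the entire initial segment $\{1,\ldots,y_n\}$ and not merely at the endpoint, which then dictates passing to the non-decreasing envelope $M$. All other steps are elementary and quantitative estimates on the rate of divergence of $(y_n)$ are not needed, since the lemma only asserts qualitative divergence.
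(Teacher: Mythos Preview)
Your proof is correct and follows essentially the same approach as the paper: both pass to the cumulative maximum $M(m)=\max_{k\le m}f(k)$ (the paper writes $\tilde f$) and then set $y_n=\max\{m:M(m)\le x_n\}$. Your detour through $N_m=\min\{n:x_n\ge M(m)\}$ is just a reparametrization, since $N_m\le n$ is equivalent to $M(m)\le x_n$; the resulting sequence $(y_n)$ and the verification are identical to the paper's.
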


\begin{proof}
   Let $\tilde{f}(m) = \max_{1\leq j \leq m} f(j)$. We can assume without loss of generality that $\tilde{f}(m)\to \infty$ as $m \to \infty$ since otherwise the claim is obvious. The first part of the lemma follows by considering $y_n \stackrel{\mathrm{def}}{=} \max \{m \in \mathbb{Z}_+ :\tilde f(m)\leq x_n\}$
   (with the convention $\max \varnothing = 1$, but note that since $x_n \nearrow \infty$, we see that there exists $n_0 \in \mathbb{Z}_+$ such that $\tilde{f}(1) \leq x_n$ for $n \geq n_0$). 
    The second claim follows from $f(y'_n) \leq \tilde{f}(y'_n) \leq \tilde{f}(y_n)$, for $n \geq n_0$, since $\tilde{f}$ is increasing.
\end{proof}

In what follows, we define $J_N^{j,\omega}$ with $1 \leq j \leq 4$ and set $J_N^\omega \stackrel{\mathrm{def}}{=} J_N^{1,\omega} \wedge J_N^{2,\omega} \wedge J_N^{3,\omega} \wedge J_N^{4,\omega}$, where each of the sequences $(J_N^{j,\omega})_{N \geq 0}$ is chosen to fulfill one of the conditions in~\eqref{eq:N0-1-i}--\eqref{eq:N0-1-iii-2}. 
By the second part of Lemma~\ref{lem:sequence}, the sequence $(J_N^\omega)_{N \geq 0}$ then satisfies~\eqref{eq:N0-1-i}--\eqref{eq:N0-1-iii-2}.
\medskip

For \eqref{eq:N0-1-i}, note that $2^{(I(\cdot)+2)(\cdot+1)L(\cdot)}$ is a function defined on positive integers. By Lemma \ref{lem:sequence}, we can choose $(J_N^{1,\omega})_{N \geq 0}$ as a sequence of integers such that $J_N^{1,\omega} \nearrow \infty$ and \eqref{eq:N0-1-i} holds for $(J_N^{1,\omega})_{N \geq 0}$. We can argue similarly for \eqref{eq:N0-1-ii} and \eqref{eq:N0-1-iii-2}, where in \eqref{eq:N0-1-iii-2} we require
\begin{equation}
(I(J_N^{4,\omega})+1)(J_N^{4,\omega}+1)L(J_N^{4,\omega}) \leq \log_2 a_N - c_4(\omega).
\end{equation}
It remains to construct a sequence $(J_N^{3,\omega})_{N \geq 0}$ fulfilling~\eqref{eq:N0-1-iii-1} (see again~\eqref{eq:compatible} for the definition of compatibility). For the first compatibility condition \eqref{eq:compatible-i}, we first choose a sequence $(\widetilde{J}^{3,\omega}_N)_{N \geq 0}$, similarly as before by Lemma \ref{lem:sequence}, with 
\begin{equation}
\ell_{\min}(\omega, \delta(\widetilde{J}_N^{3,\omega}), \alpha(\widetilde{J}_N^{3,\omega})) + (I(\widetilde{J}_N^{3,\omega})+1)(\widetilde{J}_N^{3,\omega}+1)L(\widetilde{J}_N^{3,\omega}) < \log_2 a_N,
\end{equation}
(see \eqref{eq:l_min} for the definition of $\ell_{\min}$). 
To fulfill the second compatibility condition \eqref{eq:compatible-ii}, we choose another sequence $(\widehat{J}_N^{3,\omega})_{N \geq 0}$ with $\widehat{J}_N^{3,\omega} \leq J^{1,\omega}_N$ such that (recall that $\kappa_{\text{reg}}(\alpha)$ enters the definition of $r_{\alpha, R}$; see \eqref{eq:r_N})
\begin{equation}\label{eq:alpha_N}
  1/ \kappa_{\text{reg}}(\alpha(\widehat{J}_N^{3,\omega})) \leq \frac{1}{2}\cdot \log( b_N)^{\frac{\DeltaS/2}{1+\DeltaS/2}},
\end{equation}
and assume that we have chosen $N_0'(\omega)$ large enough such that for $N\geq N_0'(\omega)$, \eqref{eq:N0-1-i} holds. Then for $N\geq N_0'(\omega)$, if $a_N \leq 2^{\ell_*}\leq b_N$, we have (with $\widehat{I}^{3,\omega}_N = I(\widehat{J}_N^{3,\omega})$ and $\widehat{L}^{3,\omega}_N = L(\widehat{J}_N^{3,\omega})$)
\begin{equation}\label{eq:min-scale-est-1}
  \begin{aligned}
    2^{\ell_0-(\widehat{I}_N^{3,\omega}+1)(\widehat{J}_N^{3,\omega}+1)\widehat{L}_N^{3,\omega}} &\geq 2^{\ell_*-(\widehat{I}_N^{3,\omega}+2)(\widehat{J}_N^{3,\omega}+1)\widehat{L}_N^{3,\omega}} \\
    &\geq a_N \cdot 2^{-(\widehat{I}_N^{3,\omega}+2)(\widehat{J}_N^{3,\omega}+1)\widehat{L}_N^{3,\omega}} \stackrel{\text{\eqref{eq:N0-1-i}}}{\geq} \frac{a_N}{u_N} = s(b_N)
  \end{aligned}
\end{equation}
(with $s(\cdot)$ defined in~\eqref{eq:s_n}). This implies 
\begin{align*}
  r_{\alpha(\widehat{J}_N^{3,\omega}), 2^{\ell_0-(\widehat{I}_N^{3,\omega}+1)(\widehat{J}_N^{3,\omega}+1)\widehat{L}_N^{3,\omega}}} &\geq  
  \exp(\kappa_{\text{reg}}(\alpha(\widehat{J}_N^{3,\omega})) (\log s( b_N))^{1+\DeltaS})\\
  & \stackrel{\eqref{eq:alpha_N},\eqref{eq:s_n}}{\geq} 
  \exp\left( 2 \log( b_N)^{-\frac{\DeltaS/2}{1+\DeltaS/2}} \cdot\log( b_N)^{\frac{1+\DeltaS}{1+\DeltaS/2}} \right)= b_N^2.
\end{align*}
Hence, 
\begin{equation}
  \frac{1}{b_N} \Big(r_{\alpha(\widehat{J}_N^{3,\omega}), 2^{\ell_0-(\widehat{I}_N^{3,\omega}+1)(\widehat{J}_N^{3,\omega}+1)\widehat{L}_N^{3,\omega}}} - 4\cdot 2^{\ell_0} \Big) \geq \frac{1}{b_N}\Big(b_N^2 - 4 b_N\Big)
\end{equation}
for $N\geq N_0'(\omega)$, which tends to infinity as $N\to\infty$. In other words, for $N$ large enough (depending on $\omega$), we have
\begin{equation}
r_{\alpha(\widehat{J}_N^{3,\omega}), 2^{\ell_0-(\widehat{I}_N^{3,\omega}+1)(\widehat{J}_N^{3,\omega}+1)\widehat{L}_N^{3,\omega}}} - 4\cdot 2^{\ell_0} \geq b_N,
\end{equation}
 so~\eqref{eq:compatible-ii} holds. We finally set $J^{3,\omega}_N = \widetilde{J}^{3,\omega}_N \wedge  \widehat{J}^{3,\omega}_N$, and $(J_N^{3,\omega})_{N \geq 0}$ satisfies~\eqref{eq:N0-1-iii-1}. 
\end{proof}
We conclude this section with a discussion of generalizations of our main result and pertinent applications.

\begin{remark}
\label{rem:Applications-generalizations}
\begin{enumerate}[label=(\arabic*),leftmargin=*]
\item \textit{(Bond percolation models)} We have chosen in the~\refA a set-up of site percolation models fulfilling \refPone--\refPtwo, \refD, and \refSone--\refStwo to include a broad class of examples of percolation clusters with strong, algebraically decaying correlations, see Remark~\ref{rmk:examples}. One can alternatively consider bond percolation models by setting $\Omega_{\mathrm{bond}} = \{0,1\}^{\mathbb{E}^d}$, $\mathcal{F}_{\mathrm{bond}} = \sigma(\omega_b \mapsto \omega_b(e) \, : \, e \in \mathbb{E}^d )$ and considering a class of probability measures $\{\mathbb{P}^u \, : \, u \in (a,b) \}$ with real numbers $a < b$ on $(\Omega_{\mathrm{bond}},\mathcal{F}_{\mathrm{bond}})$. One then studies the subgraph $(\mathbb{Z}^d, \mathcal{E})$ with $\mathcal{E} = \{e \in \mathbb{E}^d \, : \, \omega_b(e) = 1 \}$ and defines 
\begin{equation}
\mathcal{S}_\infty^{\mathrm{bond}} = \text{ the set of vertices in infinite connected components of $(\mathbb{Z}^d, \mathcal{E})$}.
\end{equation}
A particularly important example is supercritical Bernoulli bond percolation on $(\mathbb{Z}^d,\mathbb{E}^d)$, $d \geq 3$, where $\mathbb{P}^u$ is a product measure of a Bernoulli distribution with success parameter $u$ on the edges, and $u > p_c^{\mathrm{bond}}$. A simple inspection of the proof of Theorem~\ref{thm:solidification} shows that our results remain valid if the quenched heat kernel bounds~\eqref{eq:HK} (together with an integrability condition as in~\eqref{eq:HK-time}) and the large-deviation estimates for the volume~\eqref{eq:QCV} hold for the corresponding infinite connected component $\mathcal{S}_\infty^{\mathrm{bond}}$. The analogues of~\eqref{eq:HK} and~\eqref{eq:HK-time} are the main results in~\cite{barlow2004RWpercolation} (see {Theorem 1} and (0.5), p.~3026 therein), and~\eqref{eq:QCV} is found for instance in~\cite[Theorem 1.2]{pisztora1996large} (or~\cite[Proposition 11]{dario-gu2021percolation} for an even more quantitative result). We can therefore obtain the equivalent of~\eqref{eq:solid-1} in this situation, namely that for $\chi \in (0,1)$, $(c_N)_{N \geq 0}$ a sequence of positive real numbers decreasing to $0$, and $\mathbb{P}^u$-almost every $\omega_b \in \{0,1\}^{\mathbb{E}^d}$, one has
  \begin{equation}\label{eq:solid-bond}
    \lim_{N\to\infty} \sup_{\epsilon/2^{\ell_*}\leq c_N,a_N\leq 2^{\ell_*}\leq b_N} \sup_{U_0\in \cU_{\ell_*,N}^\omega} \sup_{\Sigma\in \cS_{U_0,\epsilon,\chi}^\omega} \sup_{x\in A_N \cap \cS_\infty^{\mathrm{bond}}} P_x^{\omega_b} [H_\Sigma=\infty] =0,
  \end{equation}
  with $(a_N)_{N \geq 0}$ and $(b_N)_{N \geq 0}$ as in~\eqref{eq:a_N-bound} with \textit{any} choice of $\DeltaS > 0$. In fact, the integrability for the random variables $\mathcal{T}(z)$ is stretched exponential in~\cite{barlow2004RWpercolation} (as compared to the superpolynomial rate in~\eqref{eq:HK-time}), which should also straightforwardly lead to an improvement in condition in~\eqref{eq:a_N-bound}, but we do not pursue this extension since~\eqref{eq:a_N-bound} seems strong enough for most applications we have in mind.
\item \textit{(Stability under perturbations with uniformly elliptic edge weights)} As we remarked earlier, in~\cite{CN2021disconnection} solidification estimates were obtained for random walks on $(\mathbb{Z}^d,\mathbb{E}^d,\lambda)$, where $\lambda \in [\lambda_{\min},1]^{\mathbb{E}^d}$ are uniformly elliptic edge weights, and $\lambda_{\min} \in (0,1)$. As we briefly explain, our results can be adapted in a direct way to include uniformly elliptic (potentially non-random) edge weights. 

To that end, let $\lambda_{\min} \in (0,1)$ and $\lambda \in [\lambda_{\min},1]^{\mathbb{E}^d}$ be fixed. Under either our standing~\refA for site percolation models or for the supercritical Bernoulli bond percolation of the previous item, we define the induced \textit{weighted percolation configuration} $(\mathcal{S}_\infty,E(\mathcal{S}_\infty),(\lambda_e)_{e  \in E(\mathcal{S}_\infty)})$ or $(\mathcal{S}^{\mathrm{bond}}_\infty,\mathcal{E},(\lambda_e)_{e \in \mathcal{E}})$, governed by $\mathbb{P}^u$, $u \in (a,b)$ (where in the Bernoulli bond percolation case, $(a,b) = (p_c^{\mathrm{bond}},1)$). We then consider the simple random walk on $(\mathcal{S}_\infty,E(\mathcal{S}_\infty),(\lambda_e)_{e  \in E(\mathcal{S}_\infty)})$ or $(\mathcal{S}^{\mathrm{bond}}_\infty,\mathcal{E},(\lambda_e)_{e \in \mathcal{E}})$, governed by $(P^{\omega,\lambda}_x)_{x \in \mathcal{S}_\infty}$ with generator
\begin{equation}
\mathcal{L}_{\mathcal{S}_\infty}^\lambda f(x) = \frac{1}{\mu_x^{\lambda}} \sum_{x \in \mathcal{S}_\infty} \mu_{\{x,y\}}^\lambda (f(y)-f(x)),
\end{equation} 
with 
 \begin{equation}
  \label{eq:Weights-given-mu-lambda}
      \mu_{\{x,y\}}^\lambda = \begin{cases}
          \lambda_{\{x,y\}}, & \{x,y\}\in E(\cS_\infty),\\
          0, & \text{otherwise},
      \end{cases}
      \qquad
      \mu_{x}^\lambda = \sum_{y \in \mathcal{S}_\infty} \mu_{\{x,y\}}^\lambda,  \ x \in \cS_\infty,
  \end{equation}
  in the site percolation case, and similarly for the case of Bernoulli bond percolation. We also define the heat kernel for the random walk, $q^{\omega,\lambda}_t$, and heat kernel for the walk killed upon leaving $U$, $q^{\omega,\lambda}_{t,U}$, as in~\eqref{eq:HeatKernelDef} and~\eqref{eq:KilledHeatKernel} with $\mu^\lambda$ replacing $\mu$ and $P^{\omega,\lambda}_x$ replacing $P^\omega_x$ (again, similarly in the bond percolation case). By uniform ellipticity, we see that 
 \begin{equation}
 \lambda_{\min} \mu_x \leq  \mu_{x}^{\lambda} \leq \mu_x
\end{equation}  
holds for every $x \in \mathcal{S}_\infty$. Using this observation one can see that the proof of the functional inequalities in~\cite[Section 4]{sapozhnikov2017long-range} hold by potentially adjusting the constants in the definition of (very) regular balls (see~\cite[p.~1874]{sapozhnikov2017long-range}), showing that~\eqref{eq:HK} and~\eqref{eq:HK-time} remain valid (again after possibly after adjusting the constants therein) with  $q^{\omega,\lambda}_{t}$ replacing $q^\omega_t$.
  Leaving the control on the volumes in Proposition~\ref{prop:uni-den} unchanged, one can see that Theorem~\ref{thm:solidification} and Corollary~\ref{cor:Capacity-Lower-Bound} remain valid for $P_x^{\omega,\lambda}$ replacing $P_x^\omega$. As a particular application (choosing the $(\lambda_{e})_{e \in \mathbb{E}^d}$ i.i.d.~random variables under some environment measure $\widehat{\mathbb{Q}}$ and considering $\widehat{\mathbb{Q}} \otimes \mathbb{P}^u$ with $\mathbb{P}^u$ governing Bernoulli bond percolation and $u \in (p_c^{\mathrm{bond}},1)$), we obtain solidification estimates and capacity controls for i.i.d.~edge weights $\mu_{\{x,y\}}^\lambda \in \{ 0 \} \cup [ \lambda_{\min},1] $. This is for instance the framework considered in~\cite{armstrong2018elliptic}, ~\cite{dario-gu2021percolation}, or~\cite{schweiger2024maximum}. 
\item \textit{(Prospective applications)} The solidification estimates and related capacity controls of~\cite{nitzschner2017solidification} and~\cite{CN2021disconnection} have been indispensable for the proofs of certain upper bounds on largely deviant disconnection or isolation-type events in the (strongly) supercritical phases of various long-range percolation models, as well as in the investigation of the picture emerging when conditioning on such events. As one case in point (but see also below for further examples), we consider as in Remark~\ref{rmk:examples} (4) the level set $E^{\geq h}_{\mathrm{GFF}}$ of the Gaussian free field $(\varphi_x)_{x \in \mathbb{Z}^d}$ on $\mathbb{Z}^d$, $d \geq 3$ when $h < h_\ast(d)$ and introduce for $N \geq 1$ the \textit{disconnection event} 
\begin{equation}
\begin{split}
\mathcal{D}_N^h &= \Big\lbrace A_N \stackrel{\geq h}{\centernot \longleftrightarrow}   S_N \Big\rbrace \\
& = \{\text{no path in $E^{\geq h}_{\mathrm{GFF}}$ starting in $A_N$ and ending in $S_N$ exists} \},
\end{split}
\end{equation}
where $S_N = \partial_{\mathbb{Z}^d} B(0,MN)$ with $M > 0$, and $A \subseteq \mathring{B}_{\mathbb{R}^d}(0,M)$ is a compact set with non-empty interior fulfilling $\capa_{\mathbb{R}^d}(A) = \capa_{\mathbb{R}^d}(\mathring{A})$ (and $\capa_{\mathbb{R}^d}$ denoting the Brownian capacity). By~\cite{sznitman2015disconnection} and~\cite{nitzschner2018disconnection} one knows in combination with the sharpness of the phase transition of $E^{\geq h}_{\mathrm{GFF}}$ from~\cite{hugo2023equality} that
\begin{equation}
\label{eq:Disconnection-Statement}
\lim_{N \rightarrow \infty} \frac{1}{N^{d-2}} \log \mathbb{P}[\mathcal{D}_N^h] = - \frac{1}{2d}(h_\ast - h)^2 \capa_{\mathbb{R}^d}(A).
\end{equation}
Furthermore by~\cite{chiarini2020GFF} (again in combination with~\cite{hugo2023equality}) one has that conditionally on $\mathcal{D}_N^h$, the empirical mean $\frac{1}{N^d} \sum_{x \in \mathbb{Z}^d}\varphi_x \delta_{x/N}$ tends to follow a local profile given by $-(h_\ast - h)\mathscr{h}_A$ (with $\mathscr{h}_A$ the harmonic potential of $A$). While upper and lower bounds for~\eqref{eq:Disconnection-Statement} were developed in~\cite{sznitman2015disconnection} in the case where $A = [-1,1]^d$, the upper bound implicit in~\eqref{eq:Disconnection-Statement} requires a capacity bound obtained from solidification estimates in the case where $A$ is non-convex. For the case of the Gaussian free field on $(\mathbb{Z}^d,\mathbb{E}^d,(\lambda_e)_{e \in \mathbb{E}^d})$ with uniformly elliptic random edge weights $\lambda_e \in [\lambda_{\min},1]$, stationary and ergodic under an environment measure $\widehat{\mathbb{Q}}$, quenched upper and lower bounds for the disconnection event in~\eqref{eq:Disconnection-Statement} have been established in~\cite{CN2021disconnection} with the help of a version of solidification estimates for uniformly elliptic edge weights. \medskip

We briefly explain how capacity lower bounds are used to obtain asymptotic upper bounds for the probability of disconnection events, and refer to~\cite[Section 7]{CN2021disconnection} for details. One considers a certain ``effective disconnection event'' $\widetilde{\mathcal{D}}^{h,\lambda}_N$, on which a coarse graining of low enough combinatorial complexity is performed. This results in a decomposition
\begin{equation}
\widetilde{\mathcal{D}}^{h,\lambda}_N = \bigcup_{\kappa \in \mathcal{K}_N } \mathcal{D}_{N,\kappa}^{\lambda},
\end{equation}
with $|\mathcal{K}_N| = o(\exp(N^{d-2}))$ and for any choice of $\kappa \in \mathcal{K}_N$ (corresponding to the coarse graining in effect) on the event $\mathcal{D}_{N,\kappa}^{\lambda}$ an interface $\Sigma_\kappa$ of ``blocking boxes'' is present (see~\cite[(7.44)--(7.45)]{CN2021disconnection}) that corresponds to the porous interfaces considered in Section~\ref{sec:solidification}.  The challenge is then to bound \textit{uniformly} over all possible sets of blocking boxes encoded by $\kappa \in \mathcal{K}_N$ the capacity $\mathrm{\capa}^{\lambda}(\Sigma_\kappa)$ from below by $\mathrm{\capa}^{\lambda}(A'_N)$, where $A' \subseteq \mathring{A}$ is compact. Together with a segmentation $U_1$ (depending also on the coarse-grained configuration $\kappa \in \mathcal{K}_N$) with $d_\infty(A'_N,U_0) \geq cN$, one can then verify that $\Sigma \in \mathcal{S}^{\lambda}_{U_0,10\widehat{L}_0,c}$, where $\widehat{L}_0 = o(N)$ is a ``near macroscopic'' scale (see~\cite[(7.31)]{CN2021disconnection} for its precise definition). This is where the capacity control 
\begin{equation}
\label{eq:Capacity-uniform-elliptic}
\liminf_{N \rightarrow \infty}\inf_{\epsilon/2^{\ell_*}\leq c_N} \inf_{U_0\in \cU_{\ell_*,N}} \inf_{\lambda \in [\lambda_{\min},1]^{\mathbb{E}^d}} \inf_{\Sigma\in \cS_{U_0,\epsilon,\chi}^{\lambda}} \frac{\capa^{\lambda}(\Sigma)}{\capa^{\lambda}(A'_N)} \geq 1,
\end{equation}
see~\cite[(4.8)]{CN2021disconnection} is deployed, with the choices $\epsilon = 10\widehat{L}_0$, $2^{\ell_\ast} \geq c N$, and $c_N = \frac{2^{\ell_\ast}}{N} \to \infty$. \medskip

It is instructive to compare this to our set-up. The capacity bound derived in~\eqref{eq:Capacity-lower-bound-statement} which ought to replace~\eqref{eq:Capacity-uniform-elliptic} in our context holds for $\mathbb{P}^u$-a.e.~$\omega \in \Omega_0$, and involves the additional constraint $a_N \leq 2^{\ell_\ast} \leq b_N$, with $(a_N)_{N \geq 0}$ and $(b_N)_{N \geq 0}$ fulfilling~\eqref{eq:a_N-bound}. Since the application we have in mind requires $\widehat{c} N \leq  2^{\ell_\ast} \leq \widehat{C}N$, one can for instance choose $a_N = \widehat{c}N$ and $b_N= \widehat{C}N^{1+\nu}$ for any $\nu > 0$ and see that~\eqref{eq:a_N-bound} is fulfilled for \textit{any} $\DeltaS > 0$. \medskip

It is therefore quite plausible that the bounds in~\eqref{eq:Capacity-lower-bound-statement} are pertinent to the investigation of disconnection-type events for the level-sets of the Gaussian free field on various percolation clusters, and we hope to return to this elsewhere~\cite{chiarini2025-in-prep}. We also highlight that similar disconnection-type questions have been investigated for the vacant set of random interlacements and the random walk (see~\cite{li2014} and~\cite{li2017} for lower bounds, and~\cite{sznitman2017},
~\cite{nitzschner2017solidification}, and~\cite{chiarini2020entropic} for corresponding upper bounds). In a similar direction, in~\cite{sznitman2019macroscopic} large-deviation type upper and lower bounds were developed for the probability that the adequately thickened connected component of the boundary of a large box centered at the origin in either the vacant set of the random walk or random interlacements, or the upper level set of the GFF leaves in the box a macroscopic volume in its complement. In the derivation of the corresponding upper bounds, capacity-type controls are brought into play in much the same vein as for the previously mentioned disconnection-type estimates. Removing the thickening inherent in the above construction is highly non-trivial, and in fact relies on a more sophisticated control on a certain type of ``bubble set'' present when observing an excess of points disconnected from the boundary of an enclosing box. In the case of random interlacements on $\mathbb{Z}^d$, $d \geq 3$, results of this type have been developed in~\cite{sznitman2023bubble} (see also references therein). Remarkably, the construction of the bubble set is achieved in part by relying on a notion of resonance sets (on $\mathbb{Z}^d$, $d \geq 3$). Methods in this direction developed in the present article may thus also be useful to provide some insight into natural questions pertaining to the excess behavior of random interlacements or the GFF on the infinite cluster of Bernoulli (bond- or site-)percolation.

\end{enumerate} 
\end{remark}

\appendix

\section{Proof of Proposition~\ref{prop:Quantitative-control-volume}}\label{sec:proof-of-QCV}

In this appendix, we prove a large-deviation type estimate for the volume density as stated in Proposition \ref{prop:Quantitative-control-volume}. The proof of this result is a slight generalization of~\cite[Lemma 3.3]{sapozhnikov2017long-range}, and follows along the same lines. \medskip

We first introduce some further notation following~\cite{sapozhnikov2017long-range}. Let $(\ell_n)_{n\geq 0}$, $(r_n)_{n\geq 0}$, and $(L_n)_{n\geq 0}$ be sequences of positive integers, and  consider the rescaled lattice $\bbG_n = L_n \bbZ^d$ as a graph with an edge between each pair of $\ell^1$-nearest neighbors in the latter. For $x\in \bbZ^d$ and $K,s \in \bbZ_+$, we define $Q_{K,s}(x)= x+ \bbZ^d \cap [0,KL_s)^d$ and let $\cC_{K,s,r}(x)$ be the largest connected component in $\cS_r \cap Q_{K,s}(x)$ for $r \in \bbZ_+$ (with ties broken arbitrarily), where we recall that $\cS_r$ is the set of vertices of $\cS$ which are in connected components of $\cS$ with $\ell^1$-diameter greater or equal to $r$. We write $Q_{K,s}$ and $\cC_{K,s,r}$ as a {shorthand} if $x=0$. We recall below the definition of $(\overline{D},\overline{I},n)$-bad events with respect to two generic sequences of events $\overline{D} = \{\overline{D}_{x,L_0} \, : \, L_0 \geq 1, x \in \mathbb{G}_0 \}$ and $\overline{I} = \{\overline{I}_{x,L_0} \, : \, L_0 \geq 1, x \in \mathbb{G}_0 \}$ from~\cite[Definition 2.1]{sapozhnikov2017long-range}. For our purposes, it will be sufficient to consider the specific events in~\eqref{eq:event-D} and~\eqref{eq:event-I}. \smallskip

We fix $\alpha\in (0,1)$ as in the statement of the proposition. Let $\eta_1(\alpha)=\sqrt{1-\frac{\alpha}{2}}\eta(u)$ and $\eta_2(\alpha)=(1+\frac{\alpha}{4})\eta(u)$ (with $\eta(u)$ as in~\eqref{eq:eta}). We define the ``seed events'' similarly as in~\cite[Section 3.1]{sapozhnikov2017long-range}
\begin{align}
  \overline{D}_{x,L_0}^\alpha & \stackrel{\mathrm{def}}{=} \left\{
  \begin{minipage}{0.6\linewidth}
    for $y\in \bbG_0$ with $|y-x|_1\leq L_0$, $\cS_{L_0} \cap (y+[0,L_0)^d$ contains a connected component $\cC_y$ with $|\cC_y|\geq \eta_1(\alpha) L_0^d$ and $\cC_y$ and $\cC_x$ are connected in $\cS\cap (x+[0,L_0)^d) \cup (y+[0,L_0)^d)$
  \end{minipage}
  \right\}^c, \text{ and } \label{eq:event-D}\\
  \overline{I}_{x,L_0}^\alpha & \stackrel{\mathrm{def}}{=} \{|\cS_{L_0} \cap (x+[0,L_0)^d)|> \eta_2(\alpha)L_0^d \},\label{eq:event-I}
\end{align}
for $L_0 \geq 1$ and $x \in \mathbb{G}_0$. We then recursively define for $\overline{E} \in \{\overline{D},\overline{I}\}$ the events $\overline{G}_{x,n,L_0}(\overline{E}^\alpha)$ by $\overline{G}_{x,0,L_0}(\overline{E}^\alpha) = \overline{E}_{x,L_0}^\alpha$ and 
\begin{equation}
  \overline{G}_{x,n,L_0}(\overline{E}^\alpha) = \bigcup_{\substack{x_1, x_2 \in \mathbb{G}_{n-1} \cap\left(x+\left[0, L_n\right)^d\right)\\
  \left|x_1-x_2\right|_{\infty} \geq r_{n-1} \cdot L_{n-1}}}\overline{G}_{x_1,n-1,L_0}(\overline{E}^\alpha) \cap \overline{G}_{x_2,n-1,L_0}(\overline{E}^\alpha), \qquad n \geq 1.
\end{equation}
We say that for $n\geq 0$, $x\in \bbG_n$ is \emph{$n$-bad} if the event $\overline{G}_{x,n,L_0}(\overline{D}^\alpha)\cup\overline{G}_{x,n,L_0}(\overline{I}^\alpha)$ occurs, and \emph{$n$-good} otherwise. \medskip

Heuristically, if all vertices {in} $\bbG_s\cap Q_{K,s}$ are $s$-good, we can iteratively remove ``small'' cubic regions at each scale, resulting in a subgraph $\cQ_{K,s,0}$ of $\bbG_0$ in which all remaining vertices are $0$-good, see \cite[Fig.~2]{sapozhnikov2017long-range} for a visualization. Since only small portions of the domain are removed at each step, the connectivity of $\cQ_{K,s,0}$ is maintained, and its volume remains comparable to that of $Q_{K,s}$. 
Hence, we obtain a globally connected component that satisfies the two-sided volume constraints at small scales. This allows us to conclude that the largest connected component in $Q_{K,s}$ meets the announced volume bounds. \medskip

The difference of our construction of the seed events \eqref{eq:event-D}--\eqref{eq:event-I} compared to \cite{sapozhnikov2017long-range} is the dependence on the additional fixed constant $\alpha \in (0,1)$. In the latter reference, $\eta_1= \frac{3}{4}\eta(u)$ and $\eta_2=\frac{5}{4}\eta(u)$ (see~Subsection 3.1 and (35) therein). We have the following analogue of \cite[Lemma 3.3]{sapozhnikov2017long-range}.

\begin{lemma}\label{lem:vol}
  Assume that the sequences $(r_n)_{n \geq 0}$ and $(\ell_n)_{n \geq 0}$ satisfy
  \begin{equation}\label{eq:prod-con}
    \prod_{i=0}^{\infty} \left[1- \left(\frac{4r_i}{\ell_i}\right)^d \right] > \frac{1+\eta_2(\alpha)}{1+2\eta_1(\alpha)} \vee \sqrt{1-\frac{\alpha}{2}}\vee \left(1-\frac{\alpha}{4}\eta(u)\right).
  \end{equation}
  Let $K,s \in \bbZ_+$ and $x_s\in \bbG_s$. If all the vertices in $\bbG_s\cap Q_{K,s}(x_s)$ are $s$-good, then $\cC_{K,s,L_0}(x_s)$ is uniquely defined and 
  \begin{equation}\label{eq:esti-largest-clust}
     \left(1-\frac{\alpha}{2}\right) \eta(u) \cdot |Q_{K,s}| \leq |\cC_{K,s,L_0}(x_s)|\leq |\cS_{L_0}\cap Q_{K,s}(x_s)|\leq \left(1+\frac{\alpha}{2}\right) \eta(u)\cdot |Q_{K,s}|.
  \end{equation}
\end{lemma}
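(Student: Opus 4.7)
The plan is to follow the multi-scale pruning construction of~\cite{sapozhnikov2017long-range}, proving the statement by induction on the scale index $s$, with the only substantive modifications coming from the $\alpha$-dependent thresholds in the seed events \eqref{eq:event-D}--\eqref{eq:event-I}. The key structural fact is that an $s$-good vertex has the property that, within the corresponding cube of side $L_s$, all $(s-1)$-bad vertices are concentrated in a region of $\ell^\infty$-diameter at most $r_{s-1} L_{s-1}$; otherwise one would exhibit two $(s-1)$-bad vertices separated by at least $r_{s-1} L_{s-1}$, contradicting $s$-goodness via the definition of $\overline{G}_{\cdot,s,L_0}$. Iterating this observation, one removes from $Q_{K,s}(x_s)$ a cube of side at most $2r_{s-1}L_{s-1}$ inside each subcube of side $L_s$, then proceeds similarly through every intermediate scale down to $0$, producing a subregion $\cQ_{K,s,0}(x_s)\subseteq Q_{K,s}(x_s)$ in which every vertex of $\bbG_0$ is $0$-good.

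The heart of the argument is then a double accounting. First, each pruning step at scale $i$ discards a fraction at most $(4r_i/\ell_i)^d$ of the volume still present, so
\[
|\cQ_{K,s,0}(x_s)| \geq \prod_{i=0}^{s-1}\bigl[1-(4r_i/\ell_i)^d\bigr]\cdot|Q_{K,s}|.
\]
Second, because $\overline{D}^\alpha_{x,L_0}$ fails on every $0$-good $x\in\bbG_0\cap\cQ_{K,s,0}(x_s)$, the local giant components $\cC_x\subseteq \cS_{L_0}\cap(x+[0,L_0)^d)$ with $|\cC_x|\geq \eta_1(\alpha)L_0^d$ exist and are mutually connected in $\cS$ for neighboring $0$-good subcubes. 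Their union $\widetilde\cC$ is therefore a single connected subset of $\cS_{L_0}\cap Q_{K,s}(x_s)$ of size at least $\eta_1(\alpha)\cdot\prod_i[1-(4r_i/\ell_i)^d]\cdot|Q_{K,s}|$, which by the middle constraint of \eqref{eq:prod-con} is at least $(1-\alpha/2)\eta(u)|Q_{K,s}|$. For the upper bound, on each $0$-good subcube the event $\overline{I}^\alpha_{x,L_0}$ fails, and combined with the crude bound $|\cS_{L_0}\cap(Q_{K,s}\setminus \cQ_{K,s,0})|\leq |Q_{K,s}\setminus\cQ_{K,s,0}|\leq (\alpha/4)\eta(u)|Q_{K,s}|$ (which uses the third constraint of \eqref{eq:prod-con}), this gives $|\cS_{L_0}\cap Q_{K,s}(x_s)|\leq (1+\alpha/4)\eta(u)|Q_{K,s}|+(\alpha/4)\eta(u)|Q_{K,s}|=(1+\alpha/2)\eta(u)|Q_{K,s}|$. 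The first constraint of \eqref{eq:prod-con} is then precisely what is needed to guarantee $|\widetilde\cC|>\tfrac{1}{2}|\cS_{L_0}\cap Q_{K,s}(x_s)|$, so that $\widetilde\cC$ lies in, and therefore determines, the unique largest component $\cC_{K,s,L_0}(x_s)$.

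The main difficulty, and essentially the only departure from the proof of~\cite[Lemma 3.3]{sapozhnikov2017long-range}, is the careful bookkeeping of the $\alpha$-dependence: the choice $\eta_1(\alpha)=\sqrt{1-\alpha/2}\,\eta(u)$ is arranged so that it combines multiplicatively with the volume-retention factor $\prod_i[1-(4r_i/\ell_i)^d]$ to yield the clean lower bound $(1-\alpha/2)\eta(u)$, while the additive splitting $(1+\alpha/4)+\alpha/4=1+\alpha/2$ on the upper side dictates the definition of $\eta_2(\alpha)$ together with the role of the third condition in \eqref{eq:prod-con}. Once these calibrations are in place and the three conditions of \eqref{eq:prod-con} are seen to be satisfied simultaneously, the remainder of the argument is a direct transcription of the pruning scheme from~\cite[Section 3]{sapozhnikov2017long-range}.
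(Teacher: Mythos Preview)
Your proposal is correct and follows essentially the same approach as the paper's proof: both rely on the pruning construction and the two structural inequalities (24)--(25) of \cite{sapozhnikov2017long-range}, and then plug in the $\alpha$-dependent thresholds $\eta_1(\alpha),\eta_2(\alpha)$ together with the three constraints in \eqref{eq:prod-con} to obtain the lower bound, the upper bound, and the uniqueness of the largest component, respectively. The only difference is one of presentation: the paper simply cites \cite[Lemma 3.3, Corollary 3.4, (24)--(25)]{sapozhnikov2017long-range} and performs the arithmetic, whereas you spell out the pruning mechanism and the role of each constraint explicitly.
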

\begin{proof}
  Without loss of generality, we assume that $x_s=0$. Suppose that all the vertices in $\bbG_s\cap Q_{K,s}$ are $s$-good. According to \cite[Lemma 3.3]{sapozhnikov2017long-range} and its proof  as well as \cite[Corollary 3.4]{sapozhnikov2017long-range}, the cluster $\cC_{K,s,L_0}$ is uniquely defined and fulfills 
  \begin{equation}
  |\cC_{K,s,L_0}| \stackrel{}{\geq} \eta_1 \cdot \prod_{i=0}^{\infty} \left[1- \left(\frac{4r_i}{\ell_i}\right)^d \right] \cdot |Q_{K,s}| \stackrel{\eqref{eq:prod-con}}{\geq} \eta_1 \cdot \sqrt{1-\frac{\alpha}{2}} \cdot |Q_{K,s}|= \left(1-\frac{\alpha}{2}\right)\eta(u)\cdot |Q_{K,s}|,
    \end{equation}
  where we used \cite[(24)]{sapozhnikov2017long-range} in the first inequality and abbreviated $\eta_1(\alpha)$ by $\eta_1$. \smallskip

  On the other hand, abbreviating $\eta_2(\alpha)$ by $\eta_2$, we have
  \begin{align}
    |\cS_{L_0}\cap Q_{K,s}| \leq\left(\eta_2+1-\prod_{i=0}^{\infty}\left(1-\left(\frac{4 r_i}{\ell_i}\right)^d\right)\right) \cdot\left|Q_{K, s}\right| \stackrel{\eqref{eq:prod-con}}{\leq} \left(1+\frac{\alpha}{2}\right)\eta(u)\cdot |Q_{K,s}|,
  \end{align}
  where we used \cite[(25)]{sapozhnikov2017long-range} in the first inequality.
\end{proof}

Note that $\eta_1(\alpha)<\eta(u)<\eta_2(\alpha)$ for $\alpha\in (0,1)$. To conclude, it remains to prove that the probabilistic controls used in~\cite{sapozhnikov2017long-range} or~\cite{alves2019} remain valid when $(\frac{3}{4}\eta(u),\frac{5}{4}\eta(u))$ is replaced by $(\eta_1(\alpha), \eta_2(\alpha))$, where $\alpha \in (0,1)$ is fixed. \medskip

More precisely, we define $\cH_{K,s}^\alpha(x_s)\in \cF$ as in \cite[Definition~3.7]{sapozhnikov2017long-range}: 
\begin{equation}
\begin{split}
  &\cH_{K,s}^\alpha(x_s) \stackrel{\mathrm{def}}{=} \left\{
  \mbox{all the vertices in $\bbG_s \cap (x_s+[-2L_s,((K+2)L_s)^d))$ are $s$-good}
  \right\} \\
  \cap &
  \left\{
  \begin{minipage}{0.85\linewidth}
    all $x,y\in \cS_{L_s} \cap Q_{K,s}(x_s)$ with $|x-y|\leq L_s$ are connected in $\cS\cap B(x,2L_s)$
  \end{minipage}
  \right\}.
  \end{split}
\end{equation}
If $\cH_{K,s}^\alpha(0)\cap \{0\in \cS_\infty\}$ occurs, then by definition $0$ and $\cC_{K,s,L_0}$ are connected, so that $\cC_{K,s,L_0}\subseteq \cS_\infty \cap Q_{K,s}$. From Lemma \ref{lem:vol}, it follows that on $\cH_{K,s}^\alpha(x_s)\cap \{0\in \cS_\infty\}$,
\begin{equation}
  \left(1-\frac{\alpha}{2}\right) \eta(u)\cdot |Q_{K,s}|\leq |Q_{K,s}(x_s) \cap \cS_\infty| \leq  \left(1+\frac{\alpha}{2}\right) \eta(u)\cdot |Q_{K,s}|, 
\end{equation}
provided that \eqref{eq:prod-con} holds. Let $R\geq 1$. We can assume without loss of generality by enlarging $\kappa_{\mathrm{d}1} (\alpha)$ that $R\geq R_0(\alpha)$ is sufficiently large. We choose 
\begin{equation}\label{eq:K-s}
  s = \max\{s' \,:\, L_{s'}^d \leq R\} \quad \text{and} \quad K= \min\{k \,:\, kL_s \geq 2R+1\}+1
\end{equation}
and $x_s\in \bbG_s$ such that $B(0,R)\subseteq Q_{K,s}(x_s)$. Note that 
\[
\frac{|B(0,R)|}{|Q_{K,s}|} \geq \frac{|Q_{K-1,s}|}{|Q_{K,s}|} = \left(1-\frac{1}{K}\right)^d \stackrel{\eqref{eq:K-s}}{\geq} \left(1-\frac{L_s}{2R+1}\right)^d \stackrel{\eqref{eq:K-s}}{\geq} \left(1-\frac{R^{1/d}}{2R+1}\right)^d.
\]
It is therefore possible to choose $R_0(\alpha)$ big enough such that 
\begin{equation}
  (1-\alpha) \eta(u) \cdot |B(0,R)|\leq |\cS_\infty \cap B(0,R)| \leq (1+\alpha) \eta(u) \cdot |B(0,R)|.
\end{equation}
For instance, a possible choice is 
\begin{equation}
  R_0(\alpha) \stackrel{\mathrm{def}}{=} \min \left\{ R_0'(\alpha)\geq 1 : \forall R\geq R_0'(\alpha), \left(1-\frac{R^{1/d}}{2R+1}\right)^d \geq \left(1-\frac{\alpha}{2} \eta(u)\right) \vee \frac{2+\alpha}{2+2\alpha}\right\}.
\end{equation}
It remains to show that
\begin{equation}\label{eq:H_K-s-0}
  \bbP^u_0[\cH_{K,s}^\alpha (x_s)^c] \leq C(\alpha)e^{-c(\alpha) (\log R)^{1+\DeltaS}}
\end{equation}
(recall $\bbP^u_0[\,\cdot\,]= \bbP^u[\,\cdot \, \vert \,  {0}\in \cS_\infty]$). To this end, we let $(\ell_n)_{n \geq 0}$, $(r_n)_{n \geq 0}$, $(L_n)_{n \geq 0}$ be defined as in~\cite[Theorem 6.4]{alves2019}, and then apply it to prove the adjusted version of Lemmas 5.2 and 5.4 in \cite{drewitz2014chemical}, with $\frac{3}{4} \eta(u)$ and $\frac{5}{4} \eta(u)$ replaced by $\eta_1(\alpha)$ and $\eta_2(\alpha)$ respectively. The adjustment is valid since only the relation $\frac{3}{4}\eta(u)<\eta(u)<\frac{5}{4}\eta(u)$ has been used in the proofs to apply the ergodic theorem (see \cite[(5.1)]{drewitz2014chemical}). Hence there exists $C_1(\alpha)$ and $C_2(\alpha,\ell_0(\alpha))$ (the dependence on $u$ is omitted) such that for all $\ell_0, r_0 \geq C_1(\alpha)$, $L_0 \geq C_2(\alpha,\ell_0(\alpha))$, and $n\geq 0$, 
\begin{equation}
  \bbP^u[\mbox{$0$ is $n$-bad}] \leq 2\cdot 2^{-2^n}.
\end{equation}
It then follows by the above inequality, the definition of $\cH_{K,s}^\alpha(x_s)$, and \refSone, that 
\begin{equation}\label{eq:H_K-s}
  \bbP^u_0[\cH_{K,s}^\alpha (x_s)^c] \leq (2K+2)^d \cdot 2\cdot 2^{-2^n} + (KL_s)^d \cdot C \cdot e^{-f_{\mathrm{S}}(u, 2L_s)}.
\end{equation}
To conclude \eqref{eq:H_K-s-0}, it remains to show that the right-hand side of the above equation is bounded from above by $C(\alpha)e^{-c(\alpha) (\log R)^{1+\DeltaS}}$. Comparing \eqref{eq:H_K-s} with the inequality below (39) in \cite{sapozhnikov2017long-range}, the claim can be proved following the arguments presented there. This completes the proof of Proposition \ref{prop:Quantitative-control-volume}. \hfill \qed

\bigskip
\noindent
\textbf{Acknowledgements} While this work was written, AC was associated with INdAM (Istituto Nazionale di Alta Matematica ``Francesco Severi'') and the group GNAMPA. MN was partially supported by Hong Kong RGC (Research Grants Council) grants ECS 26301824 and GRF 16303825. The authors wish to thank the referee for the thorough review of the article and for valuable suggestions. \\

\bibliographystyle{plain}
\bibliography{literature.bib}

\end{document}